\newcommand{\N}{\mathbb{N}}
\newcommand{\R}{\mathbb{R}}
\newcommand{\Z}{\mathbb{Z}}
\renewcommand{\epsilon}{\varepsilon}
\renewcommand{\phi}{\varphi}
\renewcommand{\hat}{\widehat}
\renewcommand{\tilde}{\widetilde}
\newcommand{\SI}{\mathcal{SI}}
\newcommand{\LD}{\mathcal{LD}}
\newcommand{\M}{\mathcal{M}}
\newcommand{\Hd}{\mathcal{H}}
\newcommand{\calB}{\mathcal{B}}
\newcommand{\calE}{\mathcal{E}}
\newcommand{\calF}{\mathcal{F}}
\newcommand{\calG}{\mathcal{G}}
\newcommand{\calX}{\mathcal{X}}
\newcommand{\dist}{\mathrm{dist}}
\DeclareMathOperator{\diam}{diam}
\DeclareMathOperator{\dom}{dom}
\DeclareMathOperator{\im}{Image}
\DeclareMathOperator{\Lip}{Lip}
\newtheorem{theorem}{Theorem}[section]
\newtheorem{proposition}[theorem]{Proposition}
\newtheorem{lemma}[theorem]{Lemma}
\newtheorem{corollary}[theorem]{Corollary}
\newenvironment{customlemma}[1]
  {\innercustomlemma}
  {\endinnercustomlemma}
\newenvironment{customproposition}[1]
  {\innercustomproposition}
  {\endinnercustomproposition}
\newenvironment{customclaim}[1]
  {\innercustomclaim}
  {\endinnercustomclaim}
\newenvironment{customreduction}[1]
  {\innercustomreduction}
  {\endinnercustomreduction}
\theoremstyle{remark}
\newtheorem{remark}[theorem]{Remark}
\begin{document}
\title{Characterizations of rectifiable metric measure spaces}

\author{David Bate \and Sean Li}
\date{\today}
\address{Department of Mathematics, The University of Chicago, Chicago, IL 60637}
\email{bate@math.uchicago.edu}
\email{seanli@math.uchicago.edu}

\begin{abstract}
  We characterize $n$-rectifiable metric measure spaces as those spaces that admit a countable Borel decomposition so that each piece has positive and finite $n$-densities and one of the following: is an $n$-dimensional Lipschitz differentiability space; has $n$-independent Alberti representations; satisfies David's condition for an $n$-dimensional chart.  The key tool is an iterative grid construction which allows us to show that the image of a ball with a high density of curves from the Alberti representations under a chart map contains a large portion of a uniformly large ball and hence satisfies David's condition.  This allows us to apply modified versions of previously known ``biLipschitz pieces'' results \cite{david,jones,david-semmes,semmes} on the charts.
\end{abstract}


\maketitle

\section{Introduction}
A metric measure space $(X,d,\mu)$ is said to be $n$-rectifiable if there exists a countable family of Lipschitz functions $f_i$ defined on measurable subsets $A_i \subset \R^n$ such that $\mu(X\backslash \bigcup_{i=1}^\infty f(A_i)) = 0$ and $\mu \ll \Hd^n$.  Similarly to how rectifiable subsets of Euclidean space possess many nice properties akin to those of smooth manifolds, rectifiable metric measure spaces also satisfy many regularity properties.  For these reasons, it is highly desirable to find general conditions that describe when a metric measure space is rectifiable.

Such conditions have been difficult to find.  Classically (that is, when the measure is defined on Euclidean space), this problem was first fully solved by Mattila \cite{mattila} for Hausdorff measure and more generally by Preiss \cite{preiss} for an arbitrary Radon measure.  In these results it was shown that the space in $n$-rectifiable if and only if the $n$-dimensional upper and lower densities of $\mu$
\begin{align*}
  \Theta^{*,n}(\mu;x) = \limsup_{r \to 0} \frac{\mu(B(x,r))}{(2r)^n}, \qquad \Theta^n_*(\mu;x) = \liminf_{r \to 0} \frac{\mu(B(x,r))}{(2r)^n}.
\end{align*}
agree and equal 1 (Mattila) or agree and are positive and finite (Preiss) at almost every point.  When these two values agree, we denote the common value by $\Theta^{n}(\mu;x)$, the $n$-dimensional density of $\mu$ at $x$.

In the metric setting, only partial answers are known.  We first partially  recall a theorem of Kirchheim \cite{kirchheim} that will be fundamental to our characterization of rectifiable metric measure spaces.
\begin{theorem}[Kirchheim \cite{kirchheim}]\label{t:kirchheim}
  Let $(X,d)$ be an $n$-rectifiable metric space of finite $\Hd^n$ measure. Then $\Theta^n(\Hd^n;x)=1$ for $\Hd^n$ a.e. $x$.
\end{theorem}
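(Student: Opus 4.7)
The plan is to combine a decomposition argument with Kirchheim's metric differentiability theorem. By $n$-rectifiability and finiteness of $\Hd^n(X)$, write $X = N \cup \bigcup_i f_i(A_i)$ with $\Hd^n(N) = 0$, each $A_i \subset \R^n$ measurable, and each $f_i \colon A_i \to X$ Lipschitz and (after a Lusin-type refinement) injective. A standard density argument shows that at $\Hd^n$-a.e.\ $x \in X$ only a single chart $f_i(A_i)$ contributes to $\Hd^n(B(x,r))$ up to $o(r^n)$, so it suffices to prove $\Theta^n(\Hd^n, f(a)) = 1$ at $\mathcal{L}^n$-a.e.\ $a \in A$ for a single Lipschitz injective chart $f \colon A \to X$.

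Kirchheim's metric differentiability theorem supplies, at $\mathcal{L}^n$-a.e.\ $a \in A$, a seminorm $s_a$ on $\R^n$ with
\begin{align*}
  d(f(a+v), f(a)) = s_a(v) + o(|v|),
\end{align*}
together with the area formula $\Hd^n(f(E)) = \int_E J(s_x)\, d\mathcal{L}^n(x)$ for measurable $E \subset A$, where $J$ denotes the metric Jacobian. Points at which $s_a$ fails to be a norm contribute zero $\Hd^n$-mass (by the area formula) and may be discarded. By Lusin's theorem we then decompose $A$ further so that on each piece the assignment $a \mapsto s_a$ is continuous and uniformly close to a fixed norm $s$ on $\R^n$.

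Fix a Lebesgue density point $a$ of such a piece and let $B_s(0,r) = \{v \in \R^n : s(v) \leq r\}$. The metric differentiability estimate, together with the density of $A$ at $a$, implies that for small $r > 0$ the symmetric difference of $f^{-1}(B(f(a), r)) \cap A$ and $(a + B_s(0,r)) \cap A$ has $\mathcal{L}^n$-measure $o(r^n)$. The area formula then yields
\begin{align*}
  \Hd^n(B(f(a), r) \cap f(A)) = J(s)\, \mathcal{L}^n(B_s(0,r)) + o(r^n),
\end{align*}
and the conclusion reduces to the identity $J(s) \cdot \mathcal{L}^n(B_s(0,1)) = 2^n$, which expresses the fact that on the finite-dimensional Banach space $(\R^n, s)$ the translation-invariant Hausdorff $n$-measure has $(2r)^n$-density equal to $1$ at the origin; this is encoded in the definition of $J$ via Kirchheim's area formula. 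The principal technical obstacle is the quantitative replacement of the preimage of a metric ball by an $s$-ball up to $o(r^n)$-error, which requires delicately combining the pointwise metric differential with Lebesgue density of $A$ at $a$ and with the uniform closeness of $s_\cdot$ to $s$ on the chosen piece.
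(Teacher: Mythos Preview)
The paper does not prove this statement; it is quoted as a known result of Kirchheim and used as input. The only remark the paper makes about the argument is that ``the proof of Kirchheim's theorem fundamentally relies on a version of Rademacher's theorem for metric space valued Lipschitz functions defined on Euclidean space'' and that ``once this is established, the outline is that of the classical case.'' Your sketch is exactly this: you invoke Kirchheim's metric differentiability theorem (the metric Rademacher theorem), reduce via Lusin and Lebesgue density to a single chart on which the metric differential is close to a fixed norm $s$, and then compare the metric ball $B(f(a),r)$ with the norm ball $a+B_s(0,r)$ to compute the density via the area formula. This is precisely the strategy of Kirchheim's original paper, so there is nothing to contrast on the level of method.

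One small point worth tightening: the identity you state as $J(s)\cdot \mathcal{L}^n(B_s(0,1)) = 2^n$ depends on the normalization of $\Hd^n$ and of the density. Kirchheim's density statement in his paper uses $\alpha(n)r^n$ in the denominator rather than $(2r)^n$, and the constant relating $\Hd^n_s$ to $\mathcal{L}^n$ on $(\R^n,s)$ comes from Busemann's theorem. With the convention used in the present paper (density defined with $(2r)^n$ and $\Hd^n$ normalized so that this density equals $1$ on rectifiable sets), your identity is correct by definition of $J$, but you should make the normalization explicit rather than appealing to ``the definition of $J$ via Kirchheim's area formula,'' since that phrasing hides the one nontrivial ingredient (Busemann's computation of Hausdorff measure in a finite-dimensional normed space). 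Apart from this, and the acknowledged technical step of controlling the symmetric difference of $f^{-1}(B(f(a),r))$ and $a+B_s(0,r)$ to order $o(r^n)$, your outline is sound and matches both Kirchheim's argument and the paper's description of it.
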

Conversely, Preiss and Ti\v{s}er \cite{preiss-tiser} showed that, for dimension 1, a lower Hausdorff density greater than (a number slightly less than) $3/4$ is sufficient to ensure 1-rectifiability.

In a different direction, but with a similar goal, the work of David and Semmes \cite{david,semmes,david-semmes} found conditions under which a Lipschitz function defined on an Ahlfors $n$-regular space taking values in $\R^n$ is in fact biLipschitz.  A metric measure space is said to be Ahlfors $n$-regular if there exist $C>0$ such that
\[\frac{1}{C}r^n \leq \mu(B(x,r)) \leq Cr^n\]
for each $x\in X$ and $r < \diam(X)$.  One may consider this condition to be a quantitative version of the property that the upper and lower $n$-densities are positive and finite.  David and Semmes showed that, if the image of every ball under a Lipschitz function contains a ball of comparable radius centred at the image of centre of the first ball (that is, the function is a Lipschitz quotient), then the function can be decomposed into biLipschitz pieces.  In fact, they only require that the image contains most (in terms of measure) of a ball of comparable radius.  We will discuss generalisations of this condition, now known as David's condition, below.  In particular, our main tool of constructing rectifiable subsets of metric measure spaces will be a generalisation of the theorems of David and Semmes that are applicable when the space only satisfies density estimates, rather than full Ahlfors regularity.  See Theorem \ref{t:general-decomp} for the statement of the generalization.

More recently, initiated by the striking work of Cheeger \cite{cheeger}, there has been much activity in generalising the classical theorem of Rademacher to metric measure spaces.  Most of all, this departed from the existing generalisations of Rademacher's theorem, for example that of Pansu \cite{pansu}, by not requiring a group structure in the domain to define the derivative.  Let $U \subset X$ be a Borel set and $\phi \colon X \to \mathbb R^n$ Lipschitz.  We say that $(U, \phi)$ form a chart of dimension $n$ and that a function $f : X \to \R$ is differentiable at $x_0 \in U$ with respect to $(U,\phi)$ if there exists a {\it unique} $Df(x_0) \in \R^n$ (the derivative of $f$ at $x_0$) so that
\begin{align*}
  \lim_{X\ni x \to x_0} \frac{|f(x) - f(x_0) - Df(x_0) \cdot (\phi(x) - \phi(x_0))|}{d(x,x_0)} = 0.
\end{align*}
A metric measure space $(X,d,\mu)$ is said to be a Lipschitz differentiability space if there exists a countable set of charts $(U_i,\phi_i)$ so that $X = \bigcup_i U_i$ and every real valued Lipschitz function defined on $X$ is differentiable at almost every point of every chart.  A Lipschitz differentiability space is said to be $n$-dimensional if every chart map is $\R^n$ valued.

Although the concept of a Lipschitz differentiability space is very general, it is known that some additional structure must exist---it must be possible to decompose the measure into an integral combination of 1-rectifiable measures known as Alberti representations.  Define $\Gamma$ to be the collection of biLipschitz functions $\gamma$ defined on a compact subset of $\R$ taking values in $X$ (known as curve fragments).  We say that $\mu$ has an Alberti representation if there exists a probability measure $\mathbb P$ on $\Gamma$ and measures $\mu_\gamma \ll \Hd^1 \llcorner \im \gamma$ such that
\[\mu(B) = \int_\Gamma \mu_\gamma(B) ~\text{d}\mathbb P\]
for each Borel set $B\subset X$.  Further, for a Lipschitz function $\phi \colon X \to \R^n$, an Alberti representation is in the $\phi$-direction of a cone $C \subset \R^n$ if
\[(\phi\circ\gamma)'(t) \in C\setminus \{0\},\]
for $\mathbb P$ a.e. $\gamma \in \Gamma$ and $\mu_\gamma$ a.e. $t \in \dom \gamma$.  Finally, we say that a collection of $n$ Alberti representations are $\phi$-independent if there exist linearly independent cones in $\R^n$ so that each Alberti representation is in the direction of a cone and no two Alberti representations are in the direction of the same cone.  One of the main results of \cite{bate} is that any $n$-dimensional chart in a Lipschitz differentiability space has $n$ independent Alberti representations.

There are known examples of Lipschitz differentiability spaces that are not groups (cf. \cite{bourdon-pajot,cheeger-kleiner,laakso}) and do not admit any rectifiable behaviour beyond the existence of Alberti representations.  Indeed, Cheeger also showed that for many of these spaces to possess a biLipschitz embedding into any Euclidean space, the dimension of the chart must equal the Hausdorff dimension of the space, which is not generally true.  (More generally, a theorem of Cheeger-Kleiner \cite{cheeger-kleiner-rnp} proves the same result for a biLipschitz embedding into an RNP Banach space.)  However, there are very natural relationships between rectifiability and differentiability.  For example, Rademacher's theorem easily extends to rectifiable sets via composition of functions (this concept is at the heart of the relationship between Alberti representations and differentiability).  Further, the proof of Kirchheim's theorem fundamentally relies on a version of Rademacher's theorem for metric space valued Lipschitz functions defined on Euclidean space.  Once this is established, the outline is that of the classical case.

In this paper we give precise conditions when the notions of rectifiability and differentiability agree and hence obtain several characterizations of rectifiable metric measure spaces.  Specifically, we prove the following theorem.

\begin{theorem}\label{t:main}
  A metric measure space $(X,d,\mu)$ is $n$-rectifiable (which we denote by Property (R)) if and only if there exist a countable collection of Borel sets $U_i\subset X$ with $\mu\left(X\setminus\bigcup_i U_i\right)=0$ and Lipschitz functions $\phi_i \colon X \to \R^n$ such that
  \[0< \Theta_*^n(\mu\llcorner U_i;x) \leq  \Theta^{*,n}(\mu\llcorner U_i;x) < \infty\]
  for $\mu$ a.e. $x \in U_i$ and every $i$, and one of the following holds:
  \begin{enumerate}[(i)]
    \item Each $(U_i,d, \mu)$ is an $n$-dimensional Lipschitz differentiability space;\label{i:main-diffspace}
    \item Each $\mu \llcorner U_i$ has $n$ $\phi_i$ independent Alberti representations;\label{i:main-alberti}
    \item Each $U_i$ satisfies David's condition a.e. with respect to the function $\phi_i$.\label{i:main-david}
  \end{enumerate}
\end{theorem}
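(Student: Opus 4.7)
The plan is to establish the cycle
\[
(R) \implies (\ref{i:main-diffspace}) \implies (\ref{i:main-alberti}) \implies (\ref{i:main-david}) \implies (R).
\]
The forward arc is essentially classical once Kirchheim's theorem is in hand: if $X$ is $n$-rectifiable, write $X$ (up to a null set) as a countable union of Lipschitz images $f_i(A_i)$ with $A_i \subset \R^n$, refine using Kirchheim's metric differentiation to assume each $f_i$ is biLipschitz onto its image, and take $U_i = f_i(A_i)$ with $\phi_i$ a Lipschitz extension of $f_i^{-1}$ to $X$. This produces an $n$-dimensional chart, making $(U_i, d, \mu)$ an $n$-dimensional Lipschitz differentiability space; the density bounds follow from biLipschitz comparison with $\R^n$ together with Theorem \ref{t:kirchheim}. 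The implication (\ref{i:main-diffspace}) $\implies$ (\ref{i:main-alberti}) is the theorem of \cite{bate} recalled above, which guarantees $n$ $\phi_i$-independent Alberti representations on any $n$-dimensional chart. It remains to prove (\ref{i:main-alberti}) $\implies$ (\ref{i:main-david}) and (\ref{i:main-david}) $\implies$ (R).

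\textbf{The main step: (\ref{i:main-alberti}) $\implies$ (\ref{i:main-david}).} This is the heart of the argument and where the iterative grid construction promised in the abstract is carried out. At a $\mu$-typical $x \in U_i$ and small scale $r$, the $n$ $\phi_i$-independent Alberti representations deposit mass along curves whose $\phi_i$-images proceed with definite speed inside $n$ independent directional cones in $\R^n$. The plan is to dyadically subdivide a grid in $\R^n$ centred at $\phi_i(x)$ and, at each generation, use the Alberti integration to locate a curve fragment in $U_i$ whose $\phi_i$-image drops from the current parent cell into a chosen child cell, the positive lower density of $\mu \llcorner U_i$ being what ensures the parent cell still carries enough mass to invoke the Alberti representation at that scale. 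Iterating these selections at every cell and passing to the limit forces $\phi_i(B(x,r))$ to fill a definite fraction of $B(\phi_i(x), cr)$ in Lebesgue measure, which is David's condition. The main obstacle is to keep the constants from degenerating across the infinitely many levels of refinement, so that the final captured fraction is bounded below uniformly; this will require selecting $x$ from a simultaneous density point set for $\mu \llcorner U_i$ and for each of the Alberti representations, and leveraging the finite upper density $\Theta^{*,n}(\mu \llcorner U_i; x)$ to convert pointwise $\mu$-mass lower bounds into Lebesgue volume lower bounds for the image.

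\textbf{Closing the loop: (\ref{i:main-david}) $\implies$ (R).} Once David's condition is in place with respect to the chart $\phi_i$ and the two-sided density estimates hold on $U_i$, Theorem \ref{t:general-decomp}, the announced generalisation of the David--Semmes biLipschitz decomposition to the setting of mere density estimates (rather than full Ahlfors regularity), produces a further countable Borel decomposition of each $U_i$ into pieces on each of which $\phi_i$ is biLipschitz onto its image in $\R^n$. Taking the biLipschitz inverses as the parameterising Lipschitz maps exhibits $X$ as a countable union of Lipschitz images of subsets of $\R^n$, and $\mu \ll \Hd^n$ follows from the finite upper density and Theorem \ref{t:kirchheim}, establishing $n$-rectifiability and closing the cycle.
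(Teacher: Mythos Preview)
Your proposal is correct and follows the same route as the paper: the cycle $(R) \Rightarrow$ (\ref{i:main-diffspace}) $\Rightarrow$ (\ref{i:main-alberti}) $\Rightarrow$ (\ref{i:main-david}) $\Rightarrow (R)$, with (\ref{i:main-alberti}) $\Rightarrow$ (\ref{i:main-david}) handled by the iterative grid construction (Lemmas \ref{l:induction-alternative}--\ref{l:david-inclusion} and Proposition \ref{p:dc-decomposition}) and (\ref{i:main-david}) $\Rightarrow (R)$ by the modified David--Semmes biLipschitz decomposition (Proposition \ref{p:local-decompose} plus Lemma \ref{l:cube-decomposition}). Note only that Theorem \ref{t:general-decomp}, which you invoke to close the loop, is not an independent input but is itself extracted \emph{a posteriori} from the proof of the main theorem via the content of Section 4 and Appendix B, so in an actual write-up you would need to carry out that work rather than cite it.
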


See Section \ref{s:preliminaries} for the precise definition of David's condition that we use.  We note that, for each condition, a different countable decomposition is permitted.  Further, in the case that the space is rectifiable, the $U_i$ can be chosen to have finite $\Hd^n$ measure and so the Hausdorff density exists and equals 1 at almost every point by Kirchheim's theorem.  Therefore, the $\mu$ density must also exist and equals the Radon-Nikodym derivative of $\mu$ with respect to $\Hd^n$ at $\mu$ almost every point.

From this theorem we see that the density estimates force the curve fragments obtained from Alberti representations to form rectifiable sets.  This is not necessarily true without this condition.  For example, the Heisenberg group is a Lipschitz differentiability space consisting of a single chart of dimension two and hence has two independent Alberti representations but is purely 2-unrectifiable.  (The Heisenberg group is Ahlfors 4-regular.)  Furthermore, in \cite{mathe}, M\'ath\'e constructs a measure on $\R^3$ with $2$ independent Alberti representations that is purely $2$-unrectifiable.  Indeed, our results are new even for a measure defined on Euclidean space: the only existing result is a corollary of Cs\"ornyei-Jones \cite{csornyei-jones} that states that a measure on $\R^n$ with $n$-independent Alberti representations must be absolutely continuous with respect to Lebesgue measure.

We point out the known and easy implications in our theorem.  Firstly, as mentioned above, by Kirchheim's theorem we may take a countable decomposition of any rectifiable metric measure space so that the $n$-dimensional density of $\mu$ on each piece exists and is finite and positive at almost every point of such a piece.  Also from Kirchheim \cite{kirchheim}, we may suppose that the Lipschitz functions that parametrise a rectifiable set are in fact biLipschitz.  Further, it is easy to see that $f(A)$, for $f\colon A \subset \R^n \to f(A)$ biLipschitz and $A$ closed, equipped with $\mu\ll \Hd^n$ is a Lipschitz differentiability space (the chart map is simply the inverse of $f$), so that (R) implies \eqref{i:main-diffspace}.  Secondly, the existence of $n$ independent Alberti representations of an $n$ dimensional chart in a Lipschitz differentiability space is proved in \cite[Theorem 9.5]{bate}.  Thus, within this paper we are interested in proving \eqref{i:main-alberti} implies \eqref{i:main-david} and that \eqref{i:main-david} implies (R). 


An immediate corollary of our theorem is the following.

\begin{corollary}
  Any Ahlfors $n$-regular $n$-dimensional Lipschitz differentiability space is $n$-rectifiable.
\end{corollary}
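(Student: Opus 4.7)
The plan is to derive the corollary as a direct application of the equivalence \eqref{i:main-diffspace} $\Leftrightarrow$ (R) in Theorem \ref{t:main}. The two ingredients to verify are the density condition on a countable decomposition and the Lipschitz differentiability structure on each piece.

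First I would produce the decomposition. Since $(X,d,\mu)$ is an $n$-dimensional Lipschitz differentiability space, there is a countable collection of $n$-dimensional charts $(U_i,\phi_i)$ covering $X$ up to a $\mu$-null set. The Ahlfors $n$-regularity gives, for every $x\in X$ and every sufficiently small $r>0$,
\[
\frac{1}{C\cdot 2^n}\;\leq\;\frac{\mu(B(x,r))}{(2r)^n}\;\leq\;\frac{C}{2^n},
\]
so the upper and lower $n$-densities of $\mu$ are everywhere bounded between positive finite constants. At $\mu$-a.e.\ point $x\in U_i$, Lebesgue's differentiation theorem for Borel measures on metric spaces yields $\mu\llcorner U_i(B(x,r))/\mu(B(x,r))\to 1$, so
\[
0\;<\;\Theta^n_*(\mu\llcorner U_i;x)\;\leq\;\Theta^{*,n}(\mu\llcorner U_i;x)\;<\;\infty
\]
for $\mu$-a.e.\ $x\in U_i$, which is the required density hypothesis.

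Next I would check that each $(U_i,d,\mu\llcorner U_i)$ is itself an $n$-dimensional Lipschitz differentiability space. The single chart $(U_i,\phi_i|_{U_i})$ covers $U_i$, and any Lipschitz function $f\colon U_i\to\R$ admits a Lipschitz extension $\tilde f\colon X\to\R$ via McShane extension; differentiability of $\tilde f$ at $\mu$-a.e.\ point of $U_i$ with respect to $(U_i,\phi_i)$ immediately restricts to differentiability of $f$ at those points. Uniqueness of the derivative is inherited from the differentiability of $\tilde f$ on the original chart.

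With these two facts in hand, condition \eqref{i:main-diffspace} of Theorem \ref{t:main} applies to the decomposition $\{U_i\}$ with chart maps $\phi_i$, giving $n$-rectifiability of $(X,d,\mu)$. No step should present any real obstacle; the content of the corollary lies entirely in Theorem \ref{t:main}, and the only minor points to handle carefully are the transfer of density bounds from $\mu$ to $\mu\llcorner U_i$ at density points and the verification that the restriction of a chart is again a chart.
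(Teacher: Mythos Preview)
Your route is more circuitous than necessary and contains a gap in the uniqueness step. The paper treats the corollary as immediate, and the simplest argument is to take the trivial decomposition $U_1=X$ in condition \eqref{i:main-diffspace}: then $(U_1,d,\mu)=(X,d,\mu)$ is an $n$-dimensional Lipschitz differentiability space by hypothesis, and Ahlfors $n$-regularity gives $0<\Theta_*^n(\mu;x)\leq\Theta^{*,n}(\mu;x)<\infty$ at every point of $X$ directly, with no Lebesgue density argument required.

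By first decomposing into charts you are forced to show that each $(U_i,d,\mu\llcorner U_i)$ is itself an $n$-dimensional Lipschitz differentiability space, and your justification for uniqueness does not work as written. The McShane extension $\tilde f$ is differentiable at a.e.\ $x_0\in U_i$ with a unique derivative $D\tilde f(x_0)$ \emph{for the limit taken over $X$}, and this restricts to give a derivative for the limit over $U_i$. But uniqueness over the smaller set $U_i$ is a strictly stronger requirement at a given point: the limit is taken over fewer approaching points, so more vectors could satisfy it. Uniqueness over $X$ does not descend to uniqueness over $U_i$ in the way you claim. The statement you want is in fact true at $\mu$-a.e.\ point, but establishing it essentially requires the $n$ independent Alberti representations of $\mu\llcorner U_i$ from \cite{bate}---at which point you may as well verify condition \eqref{i:main-alberti} directly and bypass condition \eqref{i:main-diffspace} altogether. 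The paper's own remarks at the end of the introduction already flag that passing between a Lipschitz differentiability space and its pieces is delicate.
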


This is a generalisation of a result of G. C. David (this David is not the same person who is the namesake of David's condition) in \cite{david-gc} where it is shown that tangents of almost every point of an $n$-dimensional chart in an Ahlfors $n$-regular Lipschitz differentiability spaces are $n$-rectifiable (in fact he showed even more that they are uniformly rectifiable).  In addition, he showed that $k$-dimensional charts in Ahlfors $s$-regular Lipschitz differentiability spaces for $k < s$ are strongly $k$-unrectifiable.  However, the rectifiability of Ahlfors $n$-regular Lipschitz differentiability spaces themselves was still unknown.  Indeed, this was the starting point for our work presented in this paper.

The proof of Theorem \ref{t:main} will follow the same outline as that in \cite{david-gc} although we will need to make all the arguments effective.  We quickly go over the general idea.  There, for any tangent $Y$ at a point $x \in X$, consider a limiting Lipschitz function $f \colon Y \to \R^n$ of $\phi$.  To show that $Y$ is rectifiable, it is shown that $f$ is a Lipschitz quotient and hence satisfies David's condition.  Thus, by applying the machinery developed by David \cite{david}, it is concluded that $f$ can be decomposed into biLipschitz pieces.

We will seek to use the same biLipschitz decomposition machinery and to do so, we need to show that images of balls contain large portions of uniformly large balls.  G. C. David was able to show that, when taking a tangent, the curve fragments from the Alberti representations become full Lipschitz curves of $Y$ that are pushed through $f$ to straight lines in $\R^n$.  Using these straight lines, it is possible to show that $f$ is locally surjective in a scale invariant way.

As we cannot take a tangent, we do not have access to these full straight lines, but rather fragmented biLipschitz curves.  The heart of our argument is showing that if a ball $B \subseteq X$ has a high density of points that lie on dense long fragmented curves with some certain speed relative to $\phi$, then the image of $B$ under $\phi$ contains most of a uniformly large ball.  This will be sufficient to use the biLipschitz pieces decomposition of \cite{david-semmes,semmes}.


Finally, we note that the decomposition of a rectifiable metric measure space into Lipschitz differentiability spaces, rather than into a single Lipschitz differentiability space with many charts, is necessary.  This is because of the subtle point that a countable union of Lipschitz differentiability spaces need {\it not} actually be a Lipschitz differentiability space, as the pieces may interact in undesirable ways.  This is relevant since, in the definition of the derivative, we require convergence over the whole space.  Of course, if the convergence were only over the set defining a chart, then the union would trivially be a Lipschitz differentiability space.

As an example, consider the subset of the plane defined by
\[(\{0\}\times [0,1]) \cup \{(x,p/2^n): n\in \mathbb N,\ 1\leq p< 2^n \text{ odd},\ \pm x \in [2^{-n}-4^{-n},2^{-n}]\},\]
equipped with $\Hd^1$.  This is a compact metric measure space of finite measure that is a countable union of Lipschitz differentiability spaces and is rectifiable.  However, the function $|x|$ is not differentiable at any point of the vertical segment.  More generally, the functions $|x|$ and $x$ cannot both be differentiable with respect to the same chart function and so the space is not a 1-dimensional Lipschitz differentiability space.  It cannot be a higher dimensional Lipschitz differentiability space for several reasons.  For example, it does not have 2 independent Alberti representations or the fact that the vertical segment is a porous set of positive measure.  In fact, it is easy to see that a countable union of Lipschitz differentiability spaces is a Lipschitz differentiability space if and only if every porous set has measure zero.  Finally, if we restrict the measure to the vertical segment, then we see that it is also necessary to consider only the support of $\mu$, rather than the whole space, in our theorem.\\

\noindent {\bf Acknowledgements.} We would like to thank Marianna Cs\"ornyei for many helpful conversations and Matthew Badger for comments on our preprint.  S. Li was supported by a postdoctoral research fellowship NSF DMS-1303910.

\section{Preliminaries}\label{s:preliminaries}
In this section, we introduce various properties of our metric space.  Unless specified otherwise, we will let $B(x,r)$ denote the closed ball centred at $x$ with radius $r$ in $X$.  For simplicity, we will consider a metric measure space to be a complete and separable metric space equipped with a finite Borel regular measure.  However, our main theorem immediately generalises to any metric measure space with a Radon measure.

Our first goal is to show the existence of a set of dyadic cubes that we describe now.  Unlike previous versions of dyadic cubes, we do not require the full Ahlfors regularity of $X$, but just Ahlfors regular estimates for $X$ around a certain set $K$.  The tradeoff is that we will only guarantee good covering properties for $K$ and we will only get cubes for small enough scales, but this will be sufficient for our purposes.  The proof is similar to that of Theorem 11 of \cite{christ} with some modification.  We will give a proof in Appendix A.

\begin{proposition} \label{p:cubes-1}
  Let $K \subset X$ be a compact subset of a metric measure space $(X,d,\mu)$ that satisfies
  \begin{align}
    \frac{1}{C} r^n \leq \mu(B(x,r)) \leq Cr^n, \qquad \forall x \in K, r < R, \label{e:ball-growth-hyp}
  \end{align}
  for some $R > 0$ and $C > 1$.  Then there exist constants $a > 0$, $\eta > 0$, $k_K \in \Z$ and a collection of subsets $\Delta = \{Q_\omega^k \subset X : k \leq k_K, \omega \in I_k\}$ so that $16^{k_K+2} \leq R$,
  \begin{enumerate}
    \item $\mu\left(K \backslash \bigcup_\omega Q_\omega^k \right) = 0 \qquad \forall k \leq k_K$,
    \item If $j \geq k$, then either $Q_\alpha^k \subset Q_\omega^j$ or $Q_\alpha^k \cap Q_\omega^j = \emptyset$,
    \item For each $(j,\alpha)$ and each $k \in \{j,j+1,...,k_K\}$, there exists a unique $\omega$ so that $Q_\alpha^j \subseteq Q_\omega^k$,
    \item For each $Q_\omega^k$, there is some $z_\omega \in K$ so that
    \begin{align}
      B(z_\omega, 16^{k-1}) \subseteq Q_\omega^k \subseteq B(z_\omega, 16^{k+1}). \label{e:cube-diam}
    \end{align}
    \item For each $k,\alpha$ and $t > 0$, we have
    \begin{align}
      \mu\{x \in Q_\alpha^k \cap K : \dist(x,X \backslash Q_\alpha^k) \leq t 16^k \} \leq at^\eta \mu(Q_\alpha^k). \label{e:small-boundaries}
    \end{align}
    \item For each $k,\alpha$,
    \begin{align}
      \frac{1}{C} 16^{(k-1)n} \leq \mu(Q_\alpha^k) \leq C16^{(k+1)n}. \label{e:cube-growth}
    \end{align}
  \end{enumerate}
\end{proposition}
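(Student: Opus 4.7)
The plan is to adapt Christ's dyadic cube construction \cite{christ} with three modifications to accommodate our local-Ahlfors hypothesis: (i) all nets are chosen inside $K$ so that every ball whose measure we need has centre in $K$; (ii) the construction is truncated at the coarsest scale $k_K$ with $16^{k_K+2} \leq R$ so that every relevant ball has radius less than $R$; and (iii) the covering and small-boundary statements are claimed only relative to $K$. Set $k_K := \lfloor \log_{16} R \rfloor - 2$, and for each integer $k \leq k_K$ let $\{z_\omega^k\}_{\omega \in I_k} \subseteq K$ be a maximal $16^k$-separated set. Maximality gives $K \subseteq \bigcup_\omega B(z_\omega^k, 16^k)$ and separation makes the inner balls $\{B(z_\omega^k, 16^{k-1})\}_\omega$ pairwise disjoint. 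For each $k < k_K$ and $\omega \in I_k$, assign a parent $p_k(\omega) \in I_{k+1}$ realising $\min_\beta d(z_\omega^k, z_\beta^{k+1})$ with a fixed tie-breaking well-ordering; by maximality of the $(k+1)$-net in $K$ we have $d(z_\omega^k, z_{p_k(\omega)}^{k+1}) \leq 16^{k+1}$. Write $a_{j \to k}$ for the iterated parent and $(j,\alpha) \preceq (k,\omega) \Leftrightarrow a_{j\to k}(\alpha) = \omega$ for the induced partial order.

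Build the cubes as in \cite[Theorem 11]{christ}: begin with a Voronoi-type partition at the top level $k_K$ and inductively refine each $Q_\beta^{k+1}$ among its children $\{\omega : p_k(\omega) = \beta\}$ by a nearest-child rule, subject to the override that any $x \in B(z_\omega^k, 16^{k-1})$ is always assigned to $Q_\omega^k$. This produces cubes satisfying nesting (2)--(3) by design and covering (1) because $K$ is contained up to a $\mu$-null set of Voronoi bisectors in $\bigcup_\omega B(z_\omega^k, 16^{k-1})$. The lower inclusion $B(z_\omega^k, 16^{k-1}) \subseteq Q_\omega^k$ in (4) follows from $16^k$-separation (the centre $z_\omega^k$ is uniquely closest within this ball) together with the override rule propagated through all ancestors. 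The upper inclusion $Q_\omega^k \subseteq B(z_\omega^k, 16^{k+1})$ follows from the geometric-series bound $d(z_\omega^k, z_{a_{j \to k}(\omega)}^j) \leq \sum_{i=j}^{k-1} 16^{i+1} \leq \tfrac{16}{15} 16^k$ on all descendant centres, together with the fact that every descendant sub-cell has radius $\leq 16^j \leq 16^k$. Property (6) is then immediate from (4) combined with \eqref{e:ball-growth-hyp}, since the balls $B(z_\omega^k, 16^{k \pm 1})$ have centres in $K$ and radius less than $R$ by the choice of $k_K$.

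The main obstacle is the small-boundary estimate \eqref{e:small-boundaries}. Following Christ, a point $x \in Q_\alpha^k \cap K$ within distance $t 16^k$ of $X \setminus Q_\alpha^k$ must lie close to the boundary of some sub-cube $Q_\gamma^j \subseteq Q_\alpha^k$ at some level $j \leq k$, and hence close to a Voronoi bisector of a pair of $j$-level centres. By \eqref{e:ball-growth-hyp} applied to balls centred in $K$ of radius $\lesssim 16^j \leq 16^{k_K} < R$, the measure of the $s 16^j$-neighbourhood of such a bisector intersected with a single cube is bounded by $C s^\eta 16^{jn}$ for some $\eta > 0$ depending only on $C$ and $n$. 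Summing a convergent geometric series over $j \leq k$ (successive levels differ in scale by $16$, each contributing a factor $t^\eta$) yields $\mu\{x \in Q_\alpha^k \cap K : \dist(x, X \setminus Q_\alpha^k) \leq t 16^k\} \leq a t^\eta \mu(Q_\alpha^k)$. Every Ahlfors estimate invoked concerns balls centred in $K$ of radius at most $16^{k_K+2} \leq R$, so the local hypothesis \eqref{e:ball-growth-hyp} suffices throughout.
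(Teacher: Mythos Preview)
Your outline for properties (1)--(4) and (6) is broadly in the spirit of Christ's construction, though the paper (following \cite{christ}) defines the cubes as unions of balls $Q_\alpha^k = \bigcup_{(\ell,\beta)\le(k,\alpha)} B(z_\beta^\ell, 16^\ell/8)$ rather than via Voronoi partitions; either route can be made to work for those properties. Note, however, that your justification of (1) (``$K$ is contained up to a $\mu$-null set of Voronoi bisectors'') is already suspect: in a general metric space there is no reason for bisectors to be $\mu$-null, and the paper instead uses a Lebesgue density argument to show $\mu(K\setminus\bigcup_\omega Q_\omega^k)=0$.

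The real gap is in your treatment of the small-boundary estimate (5). You assert that ``the measure of the $s16^j$-neighbourhood of such a bisector intersected with a single cube is bounded by $Cs^\eta 16^{jn}$ for some $\eta>0$ depending only on $C$ and $n$,'' appealing only to \eqref{e:ball-growth-hyp}. This does not follow from Ahlfors regularity: bisector sets $\{x: d(x,z_\alpha)=d(x,z_\beta)\}$ in an abstract doubling metric space can be thick---there is no hyperplane structure to exploit, and no a priori power-law decay of their tubular neighbourhoods. Neither Christ's original proof nor the paper's adaptation proceeds this way. Instead, the key lemma (Lemma A.17$'$ here, Lemma~17 in \cite{christ}) is a \emph{packing} argument: if $x\in Q_\alpha^k\cap K$ is within $50\cdot16^{k-N}$ of $X\setminus Q_\alpha^k$, one looks at the chain of ancestor centres $z_{k-N},\dots,z_k$ above $x$ and shows they are mutually $\epsilon_1 16^j$-separated (since otherwise $x$ would lie well inside $B(z_j,16^j/8)\subset Q_\alpha^k$). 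The disjoint balls $B(z_j,\epsilon_1 16^j)$, $j=k-N+3,\dots,k$, then all sit inside a fixed region of measure $\lesssim\mu(Q_\alpha^k)$, and each has measure $\gtrsim$ the measure of the boundary layer $E$; hence $(N-3)\mu(E)\lesssim\mu(Q_\alpha^k)$, giving the $1/N$ decay that iterates to the power law in \eqref{e:small-boundaries}. Your proposal is missing this mechanism entirely, and the bisector substitute you invoke is not valid in the stated generality.
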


By Properties 2 and 6 of Proposition \ref{p:cubes-1}, we see that for each $Q_\alpha^k \in \Delta$, the number of cubes $Q_\omega^{k-1}$ that $Q_\alpha^k$ can contain is bounded by a number depending only on $C$ and $n$.  We let $\Delta_k = \{Q_\omega^k \in \Delta : \omega \in I_k\}$.  Given a cube $Q_0$, we let $\Delta(Q_0) = \{Q \in \Delta : Q \subseteq Q_0\}$.  A similar definition gives us $\Delta_k(Q_0)$.  Note that if $k$ is too big, then $\Delta_k(Q_0)$ can be empty.  For a cube $Q$, we let $z_Q$ denote the point guaranteed to us to satisfy \eqref{e:cube-diam}.  We let $j(Q)$ denote the largest integer such that $Q \in \Delta_{j(Q)}$.  For convenience, we will also set $\ell(Q) = 16^{j(Q)}$.

A set of cubes $\calE$ is said to be a Carleson set if there exists some constant $C > 0$ so that the following Carleson estimate is satisfied:
\begin{align*}
  \sum_{Q \in \calE, Q \subseteq Q_0} \mu(Q) \leq C \mu(Q_0), \qquad \forall Q_0 \in \Delta.
\end{align*}
Here, $C$ is called the Carleson constant.  Given some set $A \subseteq X$, we say that $\calE$ is an $A$-Carleson set if
\begin{align*}
  \sum_{Q \in \calE, Q \subseteq Q_0} \mu(Q \cap A) \leq C \mu(Q_0), \qquad \forall Q_0 \in \Delta.
\end{align*}
Obviously, any subset of a ($A$-)Carleson set is also a ($A$-)Carleson set and any set of disjoint cubes is a ($A$-)Carleson set with constant 1.  Also, any Carleson set is an $A$-Carleson set for any $A \subseteq X$.

Given some $\lambda > 1$ and some cube $Q \in \Delta$, we let
\begin{align*}
  \lambda Q = \{ x \in X : \dist(x,Q) \leq (\lambda - 1) \diam(Q) \}.
\end{align*}

From now on, we will assume that $(X,d,\mu)$ is a metric measure space with
$$0< \Theta^n_*(\mu;x)\leq \Theta^{*,n}(\mu;x) < \infty, \qquad \mu\mbox{--}a.e. ~x \in X,$$
and $(U, \varphi)$ is a chart of dimension $n$ such that $\mu \llcorner U$ has $n$ $\phi$-independent Alberti representations and $\mu(U)>0$.  Note that the density conditions imply that $X$ is pointwise doubling and so satisfies the Lebesgue density theorem.  Note also that, by the Lebesgue density theorem, the upper and lower densities of $\mu\llcorner U$, for measurable $U\subset X$, equal the upper and lower densities of $\mu$ at almost every point of $U$. This allows us to freely restrict our attention to measurable subsets of $X$.

Let
\begin{align*}
  U(j,R) = \left\{x \in U : \frac{1}{j}r^n \leq \mu(B(x,r)) \leq jr^n, ~\forall r < R\right\}.
\end{align*}
It is easy to see that each $U(j,R)$ is a Borel set and since $X$ has positive and finite lower and upper $n$-densities almost everywhere, we have that
\begin{align*}
  \mu\left( U \backslash \bigcup_{j=1}^\infty \bigcup_{k=1}^\infty U(j,k) \right) = 0.
\end{align*}
Since a countable union of rectifiable metric measure spaces is rectifiable, it therefore suffices to show that each $U(j,R)$ is rectifiable.  Therefore, by inner regularity of $\mu$, we may reduce proving the rectifiability of $(U,d,\mu)$ to proving the rectifiability of $(K,d,\mu)$, where $K$ is any compact set for which there exist some constants $C > 1$ and $R_K > 0$, which we now fix, so that
\begin{align}
  \frac{1}{C} r^n \leq \mu(B(x,r)) \leq C r^n, \qquad \forall x \in K, r < R_K. \label{e:ball-growth}
\end{align}
An immediate consequence of these bounds is
\begin{align}
  \frac{1}{2^n C} \Hd^n \llcorner K \leq \mu \llcorner K \leq 2^nC \Hd^n \llcorner K. \label{e:mu-comparison}
\end{align}
Let $\Delta$ denote the collection of cubes covering $K$ as given by Proposition \ref{p:cubes-1}.

We now discuss the Alberti representations in $X$.  Firstly, it is easy to see that the set of $L$-Lipschitz curves in $\Gamma$ is a closed subset of $\Gamma$ (see \cite[Lemma 2.7]{bate} and also for the topology used in $\Gamma$).  Therefore we may rescale the domains of the curves in $\Gamma$ using a Borel function, so that we may suppose $\Gamma$ consists of only 1-Lipschitz functions.

Suppose that we have an Alberti representation in the $\varphi$-direction of a cone $C$ and that $C_1,\ldots,C_k$ are cones that cover an open neighbourhood $N$ of $C$.  Then it is possible to find a Borel decomposition $X=X_1 \cup \ldots \cup X_k$ such that each $\mu\llcorner X_i$ has an Alberti representation in the $\phi$-direction of $C_i$.  This process is known as {\it refining} the Alberti representations, see \cite[Definition 5.10]{bate}.  Therefore, we may suppose that the Alberti representations are in the $\phi$-direction of cones of width $1/1000n^2$.  Further, by applying a linear transformation to $\phi$, we may suppose that these Alberti representations are in the $\phi$-direction of cones centred on the standard basis vectors.  Of course, this linear transformation may not be angle preserving.  However, the amount that the angles may increase is determined by the cones that we started with, before any refining.  Therefore, by making the initial refinement sufficiently fine, we may suppose that the conditions on the centres and the widths of the refined cones are both satisfied.

Finally, we may suppose that $\phi$ is 1-Lipschitz.

For $R,v>0$ we define the set $GP(v, R)$ to be those $y \in K$ for which, for each $1 \leq j \leq n$, there exists a $\gamma \in \Gamma$ and a $t$ in the domain of $\gamma$ such that
\begin{enumerate}
\item $\gamma(t)= y$;
\item for every $0< r < 4R\operatorname{biLip}(\gamma)$, $|B(t,r) \cap \gamma^{-1}(K)| > (1-1/100000n)|B(t,r)|$ \label{e:gamma-density};
\item for every $s>s' \in \dom\gamma$, $\phi(\gamma(s))-\phi(\gamma(s')) \in C(e_j, 1/1000n^2)$ and
\begin{align}
  |\phi(\gamma(s))-\phi(\gamma(s'))| > v d(\gamma(s), \gamma(s')). \label{e:phi-speed}
\end{align}
\end{enumerate}
Since $X$ is complete and separable and $\mu$ is Borel regular, the results of \cite[Section 2]{bate} show that $GP(v,R)$ is measurable for any $v, R>0$.  We remark that the final condition in the definition of $GP(v,R)$ is related to the speed of an Alberti representation: an Alberti representation has $\phi$-speed $\delta$, for $\delta>0$, if
\[(\varphi\circ\gamma)'(t)> \delta \Lip(\phi,\gamma(t))\Lip(\gamma,t)\]
for $\mathbb P$ a.e. $\gamma\in \Gamma$ and $\mu_\gamma$ a.e. $t\in \dom\gamma$.  Although our Alberti representations may not have a speed, it is easy to see that the space may be decomposed into pieces on which the induced Alberti representations do have a certain speed (see \cite[Corollary 5.9]{bate}).  Further, \cite[Proposition 2.9]{bate} shows that $GP(v, R)$ converges to a set of full measure in $K$ as $v,R \to 0$.

We also define
\begin{align*}
  DP(v,\epsilon,R) = \{x \in K : \mu(GP(v,R) \cap B(x,r) )\geq (1-\epsilon) \mu(B(x,r)), \forall r < R \}.
\end{align*}
and
\begin{align*}
  DC(\beta,\epsilon,R) = \{x \in K : |B(\phi(x),\beta r) \cap \phi(B(x,r) \cap K)| \geq (1-\epsilon) |B(\phi(x),\beta r)|, \forall r < R\}.
\end{align*}
For convenience, we let $|\cdot|$ denote the Hausdorff $n$-measure on $\R^n$ rather than the Lebesgue measure, although the definition of $DC$ is clearly invariant under scalings of the measure.  Note that $DC(\beta,\epsilon,R) \subseteq DC(\beta',\epsilon,R)$ and $DC(\beta,\epsilon,R) \subseteq DC(\beta,\epsilon,R')$ when $\beta' \leq \beta$ and $R' \leq R$.

The points of $DC$ satisfy one of the two conditions of David's condition.  This is the more important of the two condition that allows one to deduce biLipschitz behavior.  See Section 1 and in particular equation (9) in \cite{david} or Section 9 and in particular Condition 9.1 and Remark 9.6 of \cite{semmes} for more information on David's condition as it was originally introduced.

We can now specify what we mean in the statement of condition \eqref{i:main-david} in Theorem \ref{t:main}.  We say that $U$ satisfies David's condition a.e. with respect to the function $\phi : U \to \R^n$ if for every $\epsilon \in (0,1)$,
\begin{align*}
  \mu\left(U \backslash \bigcup_{j=1}^\infty \bigcup_{k=1}^\infty DC(1/j,\epsilon,1/k) \right) = 0,
\end{align*}
where the $DC$ sets here are defined with respect to $U$ and $\phi$.

\section{David's condition}

The goal of this section is to prove the following proposition, which proves that Condition \eqref{i:main-alberti} implies Condition \eqref{i:main-david} of Theorem \ref{t:main}.

\begin{proposition} \label{p:dc-decomposition}
  For all $\epsilon \in (0,1)$,
  $$\mu \left( K \backslash \bigcup_{j=1}^\infty \bigcup_{k=1}^\infty DC( 1/j, \epsilon, 1/k) \right) = 0.$$
\end{proposition}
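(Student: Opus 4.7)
The plan is to reduce the statement to a pointwise implication $DP(v,\epsilon',R) \subseteq DC(\beta,\epsilon,R')$ (for appropriately chosen parameters) together with the Lebesgue density theorem.

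First, I would argue that it suffices to cover $\mu$-almost every point of $K$ by sets of the form $DP(v,\epsilon',R')$, with $v,R'$ ranging over a countable dense sequence and $\epsilon'$ chosen as needed. Indeed, since \cite[Proposition 2.9]{bate} gives $\mu(K\setminus\bigcup_{v,R} GP(v,R))=0$, and since $\mu$ is pointwise doubling by the density bounds \eqref{e:ball-growth}, the Lebesgue density theorem applies: for $\mu$-a.e.\ $x \in GP(v,R)$ and any target density $1-\epsilon'$, there exists $R'=R'(x,\epsilon') > 0$ with $x \in DP(v,\epsilon',R')$. Taking a countable union in $v,\epsilon',R'$, this covers $\mu$-a.e.\ point of $K$.

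The main step — and the key obstacle — is to prove an inclusion of the form: for every $\epsilon \in (0,1)$ and every $v > 0$, there exist $\beta=\beta(v,\epsilon,C,n) > 0$, $\epsilon'=\epsilon'(v,\epsilon,C,n) > 0$, and a scale $R' = R'(v,\epsilon,R,R_K)$ such that
\begin{align*}
  DP(v,\epsilon',R) \subseteq DC(\beta,\epsilon,R').
\end{align*}
This is precisely the surjectivity-up-to-small-error statement alluded to in the introduction, and it is where the iterative grid construction using the dyadic cubes of Proposition \ref{p:cubes-1} must be deployed. Fix $x \in DP(v,\epsilon',R)$ and $r < R'$; one needs to produce, for $|B(\phi(x),\beta r)|$-most points $w \in B(\phi(x),\beta r)$, a preimage $y \in B(x,r)\cap K$ with $\phi(y)=w$. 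The strategy is to place a fine grid on $B(\phi(x),\beta r) \subset \R^n$, and for each grid point walk from $x$ along $n$ consecutive curve fragments (one per coordinate direction), using the $n$ $\phi$-independent Alberti representations and the fact that refined cones make each coordinate of $\phi\circ\gamma_j$ near-monotone with speed $\geq v$. Since the Alberti curves through a generic point of $GP(v,R)$ are only fragmented rather than continuous, each step can fail whenever the walk exits the set $GP$; the iterative construction must cover these failures at each scale $\ell(Q)$ by a Carleson family of cubes with total $\mu$-mass $\leq \delta \mu(B(x,r))$, using the high density hypothesis of $DP$ together with the small-boundary estimate \eqref{e:small-boundaries} and the cube growth \eqref{e:cube-growth}. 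The $\phi$-pushforward of this failure family then has $\Hd^n$-measure at most $\epsilon |B(\phi(x),\beta r)|$, and the quantitative speed lower bound $v$ translates radius $r$ balls in $X$ to radius $\gtrsim vr$ balls in $\R^n$, thus fixing $\beta$. This is the technical core and is expected to occupy the bulk of the section.

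Finally, I would combine these two pieces: Step 1 places $\mu$-a.e.\ $x \in K$ inside some $DP(v,\epsilon'(v,\epsilon),R')$, Step 2 embeds this into some $DC(\beta(v,\epsilon),\epsilon,R''(v,\epsilon,R'))$, and choosing $j \geq 1/\beta$ and $k \geq 1/R''$ puts $x$ in $DC(1/j,\epsilon,1/k)$. A countable union over the indexing parameters concludes the proof of Proposition \ref{p:dc-decomposition}.
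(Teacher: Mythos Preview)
Your high-level structure matches the paper's: reduce to an inclusion $DP(v,\epsilon',R)\subseteq DC(\beta,\epsilon,R)$ via Lebesgue density, then prove that inclusion by an iterative grid argument using the curve fragments. The paper's Lemma~\ref{l:david-inclusion} gives exactly this inclusion with $\beta=v/(20n)$ and $\epsilon'=\epsilon/\alpha$ for a constant $\alpha=\alpha(K)$, and the short proof of Proposition~\ref{p:dc-decomposition} then follows just as you outline (plus a one-line measurability check for $DC$ via upper semicontinuity of $x\mapsto |\phi(B(x,r)\cap K)|$).

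Where you go astray is in the implementation of the main step. You propose to deploy the metric dyadic cubes of Proposition~\ref{p:cubes-1} together with the small-boundary estimate \eqref{e:small-boundaries} and a Carleson-type covering of the failure set. None of this is used (or needed) here. The grid in the paper lives entirely in the \emph{target} $\R^n$: one places an axis-parallel cube $Q\subset\R^n$ of sidelength $\tfrac{v}{10n}r$ centered near $\phi(x)$ and runs a dyadic stopping time on the \emph{Euclidean} quadrant subcubes of $Q$. The inductive step (Lemma~\ref{l:induction-alternative}) is the dichotomy: either there are points of $K\cap B(x,r/2)$ whose $\phi$-images sit near the centers of all $2^n$ quadrant subcubes (found by the $n$-step walk you describe), or there is a ball $B(y,\tfrac{v}{10000n^3}r)\subset B(x,r)$ disjoint from $GP(v,R)$ whose $\phi$-image sits well inside $Q$. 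The failure analysis is then elementary: the bad balls produced at the various stopped subcubes are pairwise \emph{disjoint} in $X$ (this follows from the separation condition $\dist(\phi(y),Q^c)\geq \tfrac{v}{100n}r$ and the fact that $\phi$ is $1$-Lipschitz), so their total $\mu$-mass is at most $\mu(B(x,r)\setminus GP(v,R))\leq \epsilon'\mu(B(x,r))$ directly from the $DP$ hypothesis. Converting via \eqref{e:ball-growth} gives $|Q\setminus \phi(B(x,r)\cap K)|\leq C\epsilon'\mu(B(x,r))$. No Carleson packing, no small-boundary estimate, no Christ cubes enter at this stage; those tools are reserved for Section~4. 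Your route might be made to work, but it is substantially heavier than what is required, and the phrase ``using the dyadic cubes of Proposition~\ref{p:cubes-1}'' suggests you have the grid living on the wrong side of $\phi$.
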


We will need the following lemma, which is the heart of the iterative step needed for our grid construction.

\begin{lemma} \label{l:induction-alternative}
  Let $x \in K$, $v>0$ and $R > r > 0$.  Let $Q$ be an axis-parallel cube in $\R^n$ that has sidelength $\frac{v}{10n}r$ and whose center $x_Q$ satisfies $|\phi(x) - x_Q| < \frac{v}{100n}r$.  Let $\{p_i\}_{i=1}^{2^n}$ denote the centers of the $2^n$ quadrant subcubes of $Q$.  Then at least one of the following must be true:
  \begin{enumerate}
    \item There exists points in $\{q_i\}_{i=1}^{2^n}$ in $B(x,r/2) \cap K$ so that $|\phi(q_i) - p_i| < \frac{v}{1000n}r$.
    \item There exists some $y \in B(x,r/2) \cap K$ so that $B\left(y, \frac{v}{10000n^3}r\right) \cap GP(v, R) = \emptyset$ and $\dist(\phi(y),Q^c) \geq \frac{v}{100n}r$.
  \end{enumerate}
\end{lemma}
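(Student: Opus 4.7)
The plan is to argue by contradiction. Assume neither alternative holds. Then there is some $i_0 \in \{1,\ldots,2^n\}$ for which no point of $B(x,r/2)\cap K$ has $\phi$-image within $\frac{v}{1000n}r$ of $p_{i_0}$ (this is the failure of (1) witnessed by $i_0$), while simultaneously every $y \in B(x,r/2)\cap K$ with $\dist(\phi(y),Q^c)\geq \frac{v}{100n}r$ admits a point of $GP(v,R)$ within distance $\frac{v}{10000n^3}r$ (this is the failure of (2)). I will construct a point of $B(x,r/2)\cap K$ whose $\phi$-image lies within $\frac{v}{1000n}r$ of $p_{i_0}$, producing the desired contradiction.

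The construction is a coordinate-by-coordinate walk. Set $\Delta := p_{i_0} - \phi(x)\in\R^n$; a direct computation gives $|\Delta_j| \in [\frac{3v}{200n}r,\frac{7v}{200n}r]$ for each $j$, and $\operatorname{sgn}(\Delta_j)$ is determined by which quadrant-subcube $p_{i_0}$ labels. Put $y_0 := x$, and for $j = 1,\ldots,n$ produce $y_j \in K$ with $\phi(y_j) - \phi(y_{j-1}) \approx \Delta_j e_j$ as follows. The negation of (2), applied to $y_{j-1}$, furnishes $z_{j-1}\in GP(v,R)$ with $d(y_{j-1},z_{j-1})<\frac{v}{10000n^3}r$. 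The definition of $GP(v,R)$ then supplies a $1$-Lipschitz curve $\gamma_j$ through $z_{j-1}$ whose $\phi$-increments (in the appropriate parameter direction to match $\operatorname{sgn}(\Delta_j)$) lie in the cone $C(e_j,1/1000n^2)$ and satisfy \eqref{e:phi-speed}. The $j$-th coordinate of $\phi\circ\gamma_j$ is continuous and strictly monotone in the parameter, so by the intermediate value theorem it attains the value $\phi(z_{j-1})_j + \Delta_j$ at some parameter $s^*_j$; since the domain of $\gamma_j$ contains a radius-$4R$ neighbourhood of the parameter of $z_{j-1}$ and $|\Delta_j|<R$, this presents no obstacle. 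Finally, the density condition on $\gamma_j^{-1}(K)$ lets us choose $y_j := \gamma_j(s_j)\in K$ with $|s_j - s^*_j|$ arbitrarily small.

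Two quantities must be tracked. First, the $X$-distance from $x$: each step contributes at most $\frac{v}{10000n^3}r + |\Delta_j|/v \lesssim r/(20n)$, so $y_j$ remains comfortably inside $B(x,r/2)$ throughout. Second, the $\phi$-displacement error $\phi(y_j) - \phi(x) - \sum_{k\leq j}\Delta_k e_k$: each step contributes (a) at most $\frac{v}{10000n^3}r$ from the hop $y_{j-1}\to z_{j-1}$ (since $\phi$ is $1$-Lipschitz), (b) at most $\theta|\Delta_j|$ of drift transverse to $e_j$ from the cone of width $\theta = 1/(1000n^2)$, and (c) the negligible density error $|s_j - s^*_j|$. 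Summing over the $n$ steps, the $\phi$-error is bounded by $\frac{v}{10000n^2}r + \frac{v}{30000n^2}r + o(1)$, comfortably below $\frac{v}{1000n}r$ for every $n\geq 1$. Hence $q_{i_0} := y_n$ satisfies $|\phi(q_{i_0}) - p_{i_0}|<\frac{v}{1000n}r$, contradicting the choice of $i_0$.

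The main obstacle is verifying, at every iteration, that $y_{j-1}$ satisfies the hypothesis of the negation of (2), namely $\dist(\phi(y_{j-1}),Q^c) \geq \frac{v}{100n}r$. The target value $\phi(x) + \sum_{k<j}\Delta_k e_k$ has $k$-th coordinate equal to $p_{i_0,k}$ for $k<j$ (at coordinate-distance $\frac{v}{40n}r$ from $Q^c$) and equal to $\phi(x)_k$ for $k \geq j$ (at coordinate-distance at least $\frac{v}{20n}r - \frac{v}{100n}r = \frac{4v}{100n}r$ from $Q^c$). The minimum clearance $\frac{v}{40n}r$ exceeds the required margin $\frac{v}{100n}r$ by $\frac{3v}{200n}r$, which dwarfs the accumulated $\phi$-error of order $\frac{v}{10000n^2}r$. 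Thus the induction hypothesis propagates through all $n$ steps, closing the argument.
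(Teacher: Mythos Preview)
Your argument is correct and follows essentially the same approach as the paper's proof: assume alternative (2) fails and use the curves supplied by $GP(v,R)$ to walk coordinate-by-coordinate toward the quadrant-subcube centers, tracking both the $X$-distance (via \eqref{e:phi-speed}) and the transverse $\phi$-drift (via the cone width $1/1000n^2$). The only organizational difference is that the paper builds a binary tree reaching all $2^n$ centers at once, whereas you fix a single missed target $p_{i_0}$ and build one path to it for a contradiction; the bookkeeping is identical.

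One minor inaccuracy worth cleaning up: you invoke the intermediate value theorem on $\phi_j\circ\gamma_j$ and speak of the domain of $\gamma_j$ containing a ``radius-$4R$ neighbourhood''. In fact $\gamma_j$ is a curve \emph{fragment} with domain a compact (generally disconnected) subset of $\R$, so IVT is not literally available and the domain need not contain any open interval. What property (2) of $GP(v,R)$ gives you is that $\gamma_j^{-1}(K)$ has density $>1-1/100000n$ in parameter balls of radius up to $4R\operatorname{biLip}(\gamma_j)$; combined with the monotonicity of $\phi_j\circ\gamma_j$, this lets you pick $s_j$ in the domain with $\phi_j(\gamma_j(s_j))$ within any prescribed tolerance of the target value. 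You effectively say this in the next sentence, so the argument stands---just drop the IVT phrasing and state the density argument directly, as the paper does.
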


\begin{proof}
  Suppose the second alternative is false.  We may suppose without loss of generality that $x_Q = 0$.  Let us first assume that $n=2$.  Note then that the centers of the 4 quadrant cubes are located at
  \begin{align*}
    \left( \pm \frac{v}{40n}r, \pm \frac{v}{40n}r \right), \left( \pm \frac{v}{40n}r, \mp \frac{v}{40n}r \right).
  \end{align*}
  Consider the set $B\left(x, \frac{v}{10000n^3}r\right) \cap G(v, R)$.  As $|\phi(x) - x_Q| < \frac{v}{100n}r$, the fact that the second alternative is false means that this set is not empty and so there exist $\gamma_0 : D_0 \to K$ and $t_0 \in \dom\gamma$ that satisfy the defining properties of $G(v, R)$ for the cone $C(e_1, 1/1000n^2)$ such that $\gamma_0(t_0) \in B\left(x, \frac{v}{10000n^3} r\right) \cap G(v, R)$.  By Property \ref{e:gamma-density} of $GP(v,R)$, we have that there exist points $t_1,t_2$ such that
  \begin{align}
    \left|\phi_1(\gamma_0(t_1)) + \frac{v}{40n}r \right| < \frac{v}{10000n^2}r, \qquad \left| \phi_1(\gamma_0(t_2)) - \frac{v}{40n}r \right| < \frac{v}{10000n^2}r. \label{eq:x-close}
  \end{align}
  Now consider all points of $B\left(\gamma_0(t_1), \frac{v}{10000n^3}r\right) \cap G(v, R)$.  By \eqref{e:phi-speed} in the definition of $G(v, R)$, we have
  \begin{align*}
    d(\gamma_0(t_1),x) \leq d(\gamma_0(t_1),\gamma_0(t_0)) + d(\gamma_0(t_0),x) \leq \frac{1}{10n}r + \frac{v}{10000n^3}r \leq \frac{1}{2} r.
  \end{align*}
  It is easy to see that $\dist(\phi(\gamma_0(t_1)),Q^c) \geq \frac{v}{100n}r$.  Thus, as the second alternative is false, we can take such a curve $\gamma_1 : D_1 \to K$ for the cone $C(e_2,1/1000n^2)$ and let $\gamma_1(t_{10}) \in B\left(\gamma_0(t_1), \frac{v}{10000n^3}r \right) \cap K$.  Again, by Property \ref{e:gamma-density} of $GP(v,R)$, there exist points $t_{11}$ and $t_{12}$ such that
  \begin{align*}
    \left|\phi_2(\gamma_1(t_{11})) + \frac{v}{40n}r \right| < \frac{v}{10000n^2}r, \qquad \left| \phi_2(\gamma_1(t_{12})) - \frac{v}{40n}r \right| < \frac{v}{10000n^2}r.
  \end{align*}
  As $\gamma_1$ is traveling in the $\phi$-cone $C\left(e_2,\frac{1}{1000n^2}\right)$ and starts off at a point near $\gamma_0(t_1)$, we get that
  \begin{align*}
    \left|\phi_1(\gamma_1(t_{11})) + \frac{v}{40n}r \right| &\leq \left| \phi_1(\gamma_1(t_{11})) - \phi_1(\gamma_1(t_{10})) \right| + \left| \phi_1(\gamma_1(t_{10})) - \phi_1(\gamma_0(t_1)) \right| \\
    &\qquad + \left| \phi_1(\gamma_0(t_1)) + \frac{v}{40n}r \right| \\
    &\overset{\eqref{eq:x-close}}{\leq} \frac{v}{10000n^3} r + \frac{v}{10000n^3} r + \frac{v}{10000n^2} r \leq \frac{v}{1000n^2}r.
  \end{align*}
  The first term on the right hand side above comes from the fact that the curve is traveling in a cone of width $\frac{1}{1000n^2}$ in the direction of $e_2$ over a length of no more than $\frac{v}{10n}r$.  The second term comes from how we chose $\gamma_1(t_{10})$ and the third term comes from \eqref{eq:x-close}.  A similar bound holds for $\left| \phi_1(\gamma_1(t_{12})) + \frac{v}{40n}r \right|$.
  
  Thus, we see that $\phi(\gamma_1(t_{11}))$ and $\phi(\gamma_1(t_{12}))$ are within $\frac{v}{1000n}r$ of the centers of the left two quadrant subcubes of $Q$.  One may do a similar construction starting from $\gamma_0(t_2)$ to get a curve $\gamma_2$ and points $\phi(\gamma_2(t_{21}))$ and $\phi(\gamma_2(t_{22}))$ that are within $\frac{v}{1000n}r$ of the centers of the right two quadrant subcubes of $Q$.

  In the case of general $n$, one continues using curves in the remaining directions in the obvious way.  That the width of the aperature of the cones is $O(n^{-2})$ and the radius of the balls in which we are looking for curves is $O(n^{-3})$ allows us to guarantee that the errors accumulated in finding new curves is controlled.

  Consider one of these points $p$ that we have found near the center of a quadrant subcube.  We will again assume $n = 2$ for simplicity.  Let $p = \phi(\gamma_1(t_{11}))$, for instance.  Remember that $\gamma_0$ and $\gamma_1$ both satisfy \eqref{e:phi-speed}, we have that $d(\gamma_i(s),\gamma_i(t)) < \frac{1}{v} |\phi(\gamma_i(s)) - \phi(\gamma_i(t))|$.  Thus, we have that
  \begin{align*}
    d(\gamma_1(t_{11}),x) &\leq d(\gamma_1(t_{11}),\gamma_1(t_{10})) + d(\gamma_1(t_{10}),\gamma_0(t_1)) + d(\gamma_0(t_1),\gamma_0(t_0)) + d(\gamma_0(t_0),x) \\
    &\leq \frac{1}{10n}r + \frac{v}{10000n^3}r + \frac{1}{10n}r + \frac{v}{10000n^3}r \leq \frac{1}{2}r.
  \end{align*}
  A similar estimate holds for the other quadrants.  The case of general $n$ follows completely analogously.  Thus, we have shown in the case that the second alternative of the lemma is false that the first alternative must be true.
\end{proof}

We can now prove that points whose neighborhoods have a high density of points in $GP$ belong to some $DC$ set.

\begin{lemma} \label{l:david-inclusion}
  There exists some constant $\alpha > 0$ depending only on $K$ so that for any $\epsilon,v,R > 0$ such that $R < R_K$,
  \begin{align*}
    DP(v,\epsilon, R) \subseteq DC\left( \frac{v}{20n},\alpha \epsilon,R\right).
  \end{align*}
\end{lemma}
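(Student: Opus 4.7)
The plan is to iterate Lemma~\ref{l:induction-alternative} to build a dyadic tree of cubes in $\R^n$ whose centers are $\phi$-images of preimages lying in $B(x,r) \cap K$, and then to bound the Hausdorff $n$-measure of the ``bad'' region where the iteration halts by the $DP$ density hypothesis. To set things up, fix $x \in DP(v,\epsilon,R)$ and, reducing to $r < R/2$, let $Q_0 \subset \R^n$ be the axis-parallel cube of sidelength $\frac{v}{10n}r$ centered at $\phi(x)$, so that $B(\phi(x), \frac{v}{20n}r) \subseteq Q_0$. Set $r_k = r/2^k$ and $x^{(0)} = x$. At level $k$, given $(x^{(k)}, Q^{(k)})$ with $Q^{(k)}$ of sidelength $\frac{v}{10n}r_k$ and $|\phi(x^{(k)}) - x_{Q^{(k)}}| < \frac{v}{100n}r_k$, I would apply Lemma~\ref{l:induction-alternative} to $(x^{(k)}, r_k, Q^{(k)})$: either the first alternative produces $2^n$ preimages $\{x^{(k+1)}_i\}$ near the centers of the quadrant subcubes (note $\frac{v}{1000n}r_k \leq \frac{v}{100n}r_{k+1}$, preserving the inductive error bound), in which case recurse on each, or the second alternative yields a bad point, in which case declare $Q^{(k)}$ \emph{bad} and halt the branch. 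Let $\mathcal{B}$ denote the resulting collection of bad cubes; they are pairwise disjoint in $\R^n$ since no descendants of a bad cube are produced.

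Next, I would establish two structural facts. The telescoping estimate $d(x^{(k+1)}, x^{(k)}) \leq r_k/2$ both keeps each preimage in $B(x,r) \cap K$ and makes $\{x^{(k)}\}$ Cauchy along any infinite branch; by closedness of $K$, every such branch converges to some $x^\infty \in B(x,r) \cap K$, and continuity of $\phi$ together with convergence of cube centers identifies $\phi(x^\infty)$ with the unique nested intersection. Thus every point of $Q_0 \setminus \bigcup_{Q \in \mathcal{B}} Q$ lies in $\phi(B(x,r) \cap K)$, which is closed as the continuous image of the compact set $B(x,r) \cap K$; therefore
\begin{align*}
  \left|B(\phi(x),\tfrac{v}{20n}r) \setminus \phi(B(x,r) \cap K)\right| \leq \left|\bigcup_{Q \in \mathcal{B}} Q\right| = \sum_{Q \in \mathcal{B}} |Q|.
\end{align*}

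The central estimate is to bound $\sum_{Q \in \mathcal{B}} |Q|$. Each bad cube $Q$ at level $k$ comes with a bad ball $B_Q = B(y_Q, r_Q)$ of radius $r_Q = \frac{v}{10000n^3}r_k$ disjoint from $GP(v,R)$. Ahlfors regularity on $K$ gives $\mu(B_Q) \geq C^{-1} r_Q^n$, while $|Q| = (1000 n^2)^n r_Q^n$. Because $\phi$ need not be injective, the bad balls can overlap in $X$, so I would apply the greedy 5r-Vitali covering lemma to extract a disjoint subfamily $\mathcal{A} \subseteq \mathcal{B}$ such that each $B_Q$ sits inside $5 B_A$ for some $A \in \mathcal{A}$ with $r_Q \leq r_A$. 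The $DP$ hypothesis applied at radius $2r < R$ then yields
\begin{align*}
  \sum_{A \in \mathcal{A}} r_A^n \leq C \sum_{A \in \mathcal{A}} \mu(B_A) \leq C \epsilon\, \mu(B(x,2r)) \leq 2^n C^2 \epsilon\, r^n.
\end{align*}
For each fixed $A$, the cubes $Q$ mapped to $A$ are $\R^n$-disjoint, each contains $\phi(y_Q) \in B(\phi(y_A), 5 r_A)$ since $\phi$ is $1$-Lipschitz, and each has sidelength at most $1000 n^2 r_A$, so they all fit in a single $\R^n$-ball of radius $C_n r_A$; consequently $\sum_{Q \mapsto A} |Q| \leq C'_n r_A^n$. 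Summing over $A$ and dividing by $|B(\phi(x), \frac{v}{20n}r)|$ produces the claimed inclusion with a suitable $\alpha$.

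The subtle step is the last one: because $\phi$ is not injective, two bad balls in $X$ attached to $\R^n$-disjoint bad cubes may overlap, blocking any direct summation of $\mu(B_Q)$ against the $DP$ estimate. The 5r-Vitali extraction, combined with the $\R^n$-localization of all cubes mapped to a common Vitali representative, is precisely what converts the disjoint-in-$X$ bound on $\sum_A r_A^n$ into the $\R^n$-measure bound on $\sum_Q |Q|$.
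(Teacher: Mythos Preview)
Your proof is correct, but you work harder than necessary at the final step, and the extra work stems from not fully exploiting the second clause of alternative~(2) in Lemma~\ref{l:induction-alternative}. You use that clause only to place $\phi(y_Q)$ inside the bad cube $Q$; the paper uses its full strength, namely $\dist(\phi(y_Q),Q^c)\ge \frac{v}{100n}r_k$, to show that the bad balls $B_Q=B\bigl(y_Q,\tfrac{v}{10000n^3}r_k\bigr)$ are already \emph{pairwise disjoint} in $X$. Indeed, if $Q,Q'$ are two bad cubes with associated iteration scales $r_k\le r_{k'}$, then since the bad cubes are disjoint we have $\phi(y_Q)\in Q\subseteq (Q')^c$, whence $|\phi(y_Q)-\phi(y_{Q'})|\ge \frac{v}{100n}r_{k'}$; as $\phi$ is $1$-Lipschitz this forces $d(y_Q,y_{Q'})\ge \frac{v}{100n}r_{k'}$, which exceeds the sum of the two ball radii. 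With disjointness in hand the paper simply sums $\sum_Q|Q|\le C\sum_Q\mu(B_Q)\le C\,\mu(B(x,r)\setminus GP(v,R))\le C\epsilon\,\mu(B(x,r))$ directly, with no covering lemma and no packing argument.

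Your Vitali-plus-$\R^n$-packing route recovers the same bound and is a perfectly valid substitute; the localization of all cubes mapped to a common Vitali representative via the $1$-Lipschitz bound on $\phi$ is a nice touch. But the paper's observation is both shorter and explains \emph{why} the second alternative was formulated with the deep-interior condition in the first place. Two minor remarks: your reduction to $r<R/2$ is unnecessary, since a short computation shows $d(y_Q,x)<r(1-2^{-k-1})$ and the bad-ball radii are tiny, so every $B_Q$ already sits inside $B(x,r)$; and the paper handles the infinite-branch limit by a density argument (almost every $p\in Q_0\setminus\bigcup S_i$ is a limit point of $\phi(B(x,r)\cap K)$, which is closed) rather than your explicit Cauchy-sequence argument, though both are fine.
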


\begin{proof}
  Let $x \in DP(v,\epsilon, R)$ and let $r < R$.  Since $K$ is compact, so is $\phi(B(x,r) \cap K)$.  Let $Q$ be the axis-parallel cube centered at $\phi(x)$ with sidelength $\frac{v}{10n}r$.  We will show for some constant depending only on $K$ that
  \begin{align*}
    |Q \backslash \phi(B(x,r) \cap K)| \leq C\epsilon \mu(B(x,r)).
  \end{align*}
  By \eqref{e:ball-growth}, this clearly will suffice.

  We will define the following stopping time process.  In the first stage, we start off with $(x,Q,r)$.  If the first alternative of Lemma \ref{l:induction-alternative} is satisfied with this triple, we can apply it to get $\{q_i\}_{i=1}^{2^n} \subset K$, points which map close to the center of the quadrant subcubes $\{Q_i\}_{i=1}^{2^n}$ of $Q$.  Otherwise, we terminate the process.  Assuming the process continues, for our second stage, we see if the first alternative of Lemma \ref{l:induction-alternative} applies to the triples $\{(q_i,Q_i,r/2)\}_{i=1}^{2^n}$.  Indeed, we can apply the lemma again given the conclusions of the first alternative.  We stop the process at each subcube where the first alternative fails and continue in the cubes where it doesn't, making sure to divide $r$ by a further factor of 2 at each stage.  Note that all the points of $Q$ that the process discovers has a preimage in $B(x,r)$ as the points discovered at each stage are no more than $2^{-k-1}r$ away from a point from the previous stage.

  The cubes where the process terminates after finite time $\{S_i\}_{i=1}^\infty$ are disjoint dyadic subcubes of $Q$.  We will upper bound their collective volume.  If $S_i$ is a cube where the process terminates, then by the failure of the first alternative of Lemma \ref{l:induction-alternative}, the second alternative must be true.  Thus, there exists a ball $B_i \subseteq X$ with center in $K$ of radius comparable to $\ell(S_i)$ completely contained in $B(x,r) \backslash GP(v, R)$.  By \eqref{e:ball-growth}, we then have that $|S_i| \leq C \mu(B_i)$ for some constant $C > 0$ depending only on $K$.  Note that all these balls must be disjoint.  Indeed, let $B_i = B(x_i,\frac{v}{10000n^3}r_i)$ and $B_j = B(x_j, \frac{v}{10000n^3}r_j)$ be two such balls and suppose without loss of generality that $r_i \leq r_j$.  Then by the second alternative, and the fact that $\phi$ is 1-Lipschitz, $d(x_i,x_j) \geq \frac{v}{100n}r_j$.  It easily follows from the triangle inequality that $B_i \cap B_j = \emptyset$.  Thus, we get that
  \begin{align}
    \sum_{i=1}^\infty |S_i| \leq C \sum_{i=1}^\infty \mu(B_i) \leq C \mu(B(x,r) \backslash GP(v,R)) \leq C \epsilon \mu(B(x,r)). \label{e:holes-bound}
  \end{align}
  In the last inequality, we used the fact that $x \in DP(v,\epsilon, R)$.
  
  For almost every $p \in Q \backslash \bigcup_{i=1}^\infty S_i$,
  $$\left|B(p,s) \cap \left(Q \backslash \bigcup S_i\right)\right| > 0, \qquad \forall s > 0.$$
  In particular, the process tells us that there must be some point $y \in B(x,r) \cap K$ that maps into $B(p,s)$.  As this holds true for all $s > 0$, we see that $p$ is a limit point of $\phi(B(x,r) \cap K)$.  Thus, as $\phi(B(x,r) \cap K)$ is compact, we get that almost every point of $Q \backslash \bigcup S_i$ is contained in $\phi(B(x,r) \cap K)$.  We then have that
  \begin{align*}
    \left| Q \backslash \phi(B(x,r) \cap K) \right| \leq \left| \bigcup_{i=1}^\infty S_i \right| \overset{\eqref{e:holes-bound}}{\leq} C\epsilon \mu(B(x,r)).
  \end{align*}
\end{proof}

We are now ready to prove the main proposition of this section.

\begin{proof}[Proof of Proposition \ref{p:dc-decomposition}]
First observe that
\[\phi(B(x,r)\cap K) \subset B(\phi(B(y,r)\cap K),d(y,x))\]
and so, since $\phi(B(x,r)\cap K)$ is closed, $x \mapsto |\phi(B(x,r)\cap K)|$ is upper semicontinuous.  Therefore, since Lebesgue measure is continuous on balls, we see that $DC(\beta,\epsilon, R)$ is closed for any $\beta, \epsilon, R>0$ and hence measurable.

Secondly, we know that $\mu\left(K\setminus \bigcup_j \bigcup_k GP(1/j,1/k)\right)=0$ and so, by the Lebesgue density theorem, for any $\epsilon >0$, $\mu\left(K\setminus \bigcup_j \bigcup_k DP(1/j,\epsilon, 1/k)\right) = 0$.  Therefore, Lemma \ref{l:david-inclusion} concludes the proof.
\end{proof}

\section{BiLipschitz pieces}
The main result of this section is Proposition \ref{p:local-decompose}, which will be the central step in showing that Condition \eqref{i:main-david} implies Condition (R) in Theorem \ref{t:main}.  A good knowledge of \cite{david-semmes,semmes} will be necessary in this section.  Fix a cube $Q_0 \in \Delta$ for the remainder of the section.  We recall some terminology from \cite{semmes}.

Given $\delta > 0$, the set $\SI(\delta,Q_0)$ are the subcubes $Q \in \Delta(Q_0)$ for which there is some $W \in \Delta(Q_0)$ such that $Q \subseteq W$ and $|\phi(W \cap K)| < \delta \mu(W)$.  We may drop the $Q_0$ from the parameters list if it is obvious.

Given $A > 1$, we say that two dyadic cubes are $A$-neighbors (or just neighbors) if
$$\dist(Q,Q') \leq A(\diam Q + \diam Q')$$
and
$$\frac{1}{A} \diam(Q) \leq \diam(Q') \leq A \diam(Q).$$
For some $Q \in \Delta$, we let
\begin{align*}
  \hat{Q} = \left( \bigcup \{S \in \Delta_{j(Q)} : \dist(S,Q) \leq \diam(Q) \} \right) \cap Q_0.
\end{align*}
Given $A > 1$ and $\zeta > 0$, the set $\M_A(\zeta,Q_0)$ are the subcubes $Q \in \Delta(Q_0)$ that satisfy the following property:
\begin{itemize}
  \item $|\phi(Q \cap K)| \geq (1+\zeta)^{-1} \delta \mu(Q)$,
  \item if $R \in \Delta$ is a neighbor of $Q$, then $R \subseteq Q_0$, and
  \begin{align*}
    (1+\zeta)^{-1} \frac{|\phi(Q \cap K)|}{\mu(Q)} \leq \frac{|\phi(R \cap K)|}{\mu(R)} \leq (1+\zeta) \frac{|\phi(Q \cap K)|}{\mu(Q)},
  \end{align*}
  \item if $R \in \Delta$ is a neighbor of $Q$,
  \begin{align*}
    (1+\zeta)^{-1} \frac{|\phi(Q \cap K)|}{\mu(Q)} \leq \frac{|\phi(\hat{R} \cap K)|}{\mu(\hat{R})} \leq (1+\zeta) \frac{|\phi(Q \cap K)|}{\mu(Q)}.
  \end{align*}
\end{itemize}
We will not actually use the first property of $M_A(\zeta,Q_0)$ cubes, but they are defined this way in \cite{semmes} (although without the intersection with $K$) so we keep it to reduce confusion.

Finally, for $\eta > 0$, we define the set $\LD(\eta,Q_0)$ to be the subcubes $Q \in \Delta(Q_0)$ for which there is some $W \in \Delta(Q_0)$ such that $Q \subseteq W$ and $\mu(W \cap K) < \eta \mu(W)$.  Again, we may drop the $Q_0$ from the list of parameters of both $M$ and $\LD$ if it is obvious.

We first prove that the measure of the cubes of $\SI$ can be bounded by the measure of the complement of $DC$.
\begin{lemma} \label{l:Sigma-bound}
  There exists constants $c,C > 0$ depending only on $K$ so that if $\delta > 0$, $\Sigma(c\delta^n) = \bigcup_{Q \in \SI(c\delta^n,Q_0)} Q$ and $\epsilon \in (0,1/2)$, then
  \begin{align}
    \mu(\Sigma(c\delta^n)) \leq C\mu(Q_0 \backslash DC(\delta,\epsilon,16\ell(Q_0))). \label{e:Sigma-bound}
  \end{align}
\end{lemma}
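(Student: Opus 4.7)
The plan is to decompose $\Sigma(c\delta^n)$ into a disjoint union of maximal ``bad'' cubes and, for each such cube $W$, force most of $W$ to lie outside $DC(\delta,\epsilon,16\ell(Q_0))$.

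A cube $Q\in\SI(c\delta^n,Q_0)$ is witnessed by an ancestor $W\supseteq Q$ in $\Delta(Q_0)$ with $|\phi(W\cap K)|<c\delta^n\mu(W)$, and the witness $W$ is itself in $\SI(c\delta^n,Q_0)$. Thus if $\mathcal{W}$ denotes the collection of $W\in\Delta(Q_0)$ that are maximal under inclusion among those satisfying $|\phi(W\cap K)|<c\delta^n\mu(W)$, the tree structure of the dyadic grid makes $\mathcal{W}$ pairwise disjoint, with $\Sigma(c\delta^n)=\bigcup_{W\in\mathcal{W}}W$.

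Fix $W\in\mathcal{W}$ and set $r=t\ell(W)$ for a small $t\in(0,1]$ to be chosen. The heart of the argument is to show that if $c$ is small enough (depending only on $t$, $n$, and $C$), then no $x\in W\cap K\cap DC(\delta,\epsilon,16\ell(Q_0))$ can satisfy $\dist(x,X\setminus W)>r$. Indeed, for such an $x$ one has $B(x,r)\subseteq W$, so $\phi(B(x,r)\cap K)\subseteq\phi(W\cap K)$; combining the $DC$ condition at the admissible scale $r<16\ell(Q_0)$ with Property~6 of Proposition~\ref{p:cubes-1} gives
$$(1-\epsilon)\,c_n\,\delta^n t^n\,\ell(W)^n \;\leq\; |B(\phi(x),\delta r)\cap\phi(B(x,r)\cap K)| \;\leq\; |\phi(W\cap K)| \;<\; cC\cdot 16^n\,\delta^n\,\ell(W)^n,$$
where $c_n$ is the volume of the Euclidean unit ball. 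Since $\epsilon<1/2$, any $c\leq c_n t^n/(2C\cdot 16^n)$ yields a contradiction. Therefore $W\cap K\cap DC$ sits in the boundary layer $\{x\in W\cap K:\dist(x,X\setminus W)\leq t\ell(W)\}$, whose $\mu$-mass is at most $at^\eta\mu(W)$ by Property~5 of the same proposition.

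Now fix $t$ so that $at^\eta\leq 1/4$ and $c$ accordingly. A short case split shows $\mu(W\setminus DC)\geq\mu(W)/4$ for every $W\in\mathcal{W}$: since $DC\subseteq K$, either $\mu(W\cap K)<\mu(W)/2$, whence $\mu(W\setminus DC)\geq\mu(W\setminus K)\geq\mu(W)/2$; or $\mu(W\cap K)\geq\mu(W)/2$, whence $\mu(W\setminus DC)\geq\mu(W\cap K)-at^\eta\mu(W)\geq\mu(W)/4$. Summing over the pairwise disjoint $\mathcal{W}$ then gives $\mu(\Sigma(c\delta^n))=\sum_W\mu(W)\leq 4\sum_W\mu(W\setminus DC)\leq 4\mu(Q_0\setminus DC)$, finishing the proof with $C=4$. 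The principal obstacle is the two-scale density argument above: the cube hypothesis controls $\phi$-images at scale $\ell(W)$, the $DC$ condition reports information at the much smaller scale $t\ell(W)$, and the coupling $c\sim t^n$ is precisely what reconciles the two.
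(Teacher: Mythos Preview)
Your proof is correct, but it takes a slightly different route than the paper. Both arguments begin by passing to the maximal self-witnessing cubes $W$ (these coincide with the paper's maximal cubes in $\SI$, since a maximal $\SI$-cube must be its own witness), and both derive a contradiction from a point $x\in W\cap K\cap DC$ at controlled distance from $\partial W$ via the inclusion $\phi(B(x,r)\cap K)\subseteq\phi(W\cap K)$.

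The difference lies in how one locates a definite portion of $W$ outside $DC$. The paper simply uses the center ball: by Property~4 of Proposition~\ref{p:cubes-1}, $B(z_W,\ell(W)/16)\subseteq W$ with $z_W\in K$, so any $x\in B(z_W,\ell(W)/32)\cap DC$ gives $B(x,\ell(W)/32)\subseteq W$ and the contradiction follows; hence $B(z_W,\ell(W)/32)\subseteq W\setminus DC$, and Ahlfors regularity at $z_W$ yields $\mu(W)\leq C\,\mu(B(z_W,\ell(W)/32))$. You instead invoke the small-boundary property (Property~5) to confine $W\cap K\cap DC$ to a thin boundary layer, and then run a case split on $\mu(W\cap K)$. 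Your approach is slightly heavier---it requires Property~5 and a threshold choice of $t$---whereas the paper's avoids the small-boundary estimate entirely and never needs a case split. On the other hand, your argument does not rely on the existence of a distinguished center $z_W\in K$, which could be useful in settings where such centers are not guaranteed.
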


\begin{proof}
  Let $\{Q_i\}_{i=1}^\infty$ denote the set of maximal cubes of $\SI(c\delta^n)$, which are obviously disjoint.  Then $\Sigma(c\delta^n) = \bigcup_{i=1}^\infty Q_i$.  Let $Q \in Q_0$ and suppose $x \in B(z_Q,\ell(Q)/32) \cap DC(\delta,\epsilon,16\ell(Q_0)) \neq \emptyset$.  Then
  \begin{align}
          |\phi(Q \cap K)| \geq |\phi(B(x,\ell(Q)/32) \cap K) \cap B(\phi(x),\delta \ell(Q)/32)| \geq \frac{\delta^n}{2\cdot 32^{n}}  |B(\phi(x),\ell(Q))|.\label{e:qnotinsi}
  \end{align}
  As $B(x,\ell(Q)/32) \subseteq Q$, we get from \eqref{e:cube-diam}, \eqref{e:cube-growth}, and \eqref{e:ball-growth} that $Q \notin \SI(c\delta^n)$ for some small enough $c > 0$ depending only on $K$.  Thus, if $Q \in \SI(c\delta^n)$, then $B(z_Q, \ell(Q)/32) \subseteq Q \backslash DC(\delta,\epsilon, 16\ell(Q_0))$.  As there exists some constant $C > 0$ depending only on $K$ so that $\mu(Q_i) \leq C \mu\left(B(z_{Q_i}, \ell(Q_i)/32)\right)$, we have that
  \begin{align*}
    \mu(\Sigma(c\delta^n)) \leq \sum_{i=1}^\infty \mu(Q_i) \leq C \sum_{i=1}^\infty \mu\left( B(z_{Q_i}, \ell(Q_i)/32) \right) \leq C \mu(Q_0 \backslash DC(\delta,\epsilon, 16\ell(Q_0))).
  \end{align*}
  Note that, for the final inequality, we have used the fact that the balls $B(z_{Q_i}, \ell(Q_i)/32)$ are disjoint.
\end{proof}

We can also show that the space covered by $\LD$ has small volume.
\begin{lemma} \label{l:Lambda-bound}
  Let $\eta > 0$ and $\Lambda(\eta) = K \cap \bigcup_{Q \in \LD(\eta,Q_0)} Q$.  Then
  \begin{align*}
    \mu(\Lambda(\eta)) < \eta \mu(Q_0).
  \end{align*}
\end{lemma}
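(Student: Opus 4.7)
The plan is a standard maximal-ancestor stopping-time argument. I would define the ``bad'' collection
\[\calB = \{W \in \Delta(Q_0) : \mu(W \cap K) < \eta \mu(W)\}.\]
By the definition of $\LD(\eta,Q_0)$, every $Q \in \LD(\eta,Q_0)$ is contained in some $W \in \calB$, so it suffices to bound the measure of $K \cap \bigcup_{W \in \calB} W$, which contains $\Lambda(\eta)$.

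Next I would pass to the maximal elements of $\calB$ inside $\Delta(Q_0)$. Using the tree structure of Proposition \ref{p:cubes-1}(2) and (3), the ancestor chain of any $W \in \Delta(Q_0)$ terminates at $Q_0$ after finitely many steps, so each bad cube has a well-defined largest bad ancestor. Let $\{W_i\}_i$ enumerate the resulting maximal bad cubes; by the nested-or-disjoint property they are pairwise disjoint, and
\[\bigcup_{W \in \calB} W = \bigsqcup_i W_i.\]

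Combining the two observations and applying the defining inequality of $\calB$ to each $W_i$,
\[\mu(\Lambda(\eta)) \leq \sum_i \mu(K \cap W_i) < \eta \sum_i \mu(W_i) \leq \eta \mu(Q_0),\]
the last step using disjointness together with $W_i \subseteq Q_0$. The middle strict inequality is legitimate: each $W_i \in \calB$ satisfies $\mu(W_i) > 0$ (otherwise $\mu(W_i \cap K) < \eta \mu(W_i) = 0$ is impossible), and the total mass $\sum_i \mu(W_i) \leq \mu(Q_0) < \infty$ is finite by Proposition \ref{p:cubes-1}(6), so the strict per-cube gap transfers to the sum. If $\calB$ is empty then $\Lambda(\eta) = \emptyset$ and the bound is trivial. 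The argument is entirely routine; the only thing to check is the existence of maximal ancestors inside the truncated dyadic tree $\Delta(Q_0)$, which is immediate. I do not foresee any real obstacle.
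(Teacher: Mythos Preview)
Your proposal is correct and follows essentially the same approach as the paper: pass to maximal cubes, use disjointness, and sum the defining inequality. The paper phrases it as taking maximal cubes of $\LD(\eta)$ rather than of your $\calB$, but these coincide (a maximal $Q\in\LD(\eta)$ must equal its own witness $W$, hence lies in $\calB$), so the two arguments are the same; you are simply more explicit about the strict inequality and the empty case.
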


\begin{proof}
  Let $\{Q_i\}_{i=1}^\infty$ denote the set of maximal cubes of $\LD(\eta)$, which are obviously disjoint.  Then $\Lambda(\eta) = \bigcup_{i=1}^\infty Q_i$ and so
  \begin{align*}
    \mu(\Lambda(\eta)) \leq \sum_{i=1}^\infty \mu(Q_i \cap K) < \eta \sum_{i=1}^\infty \mu(Q) \leq \eta \mu(Q_0).
  \end{align*}
\end{proof}

By Lemma 8.2 of \cite{semmes} (or one can easily derive from \eqref{e:cube-diam}), we can choose some absolute constant $b \in (0,1)$ so that if $x,y \in K$ are distinct points and $Q$ is the smallest cube in $\Delta$ such that $x \in Q$ and $y \in 2Q$, then
\begin{align*}
  d(x,y) \geq 10b \diam(Q).
\end{align*}

\begin{lemma} \label{l:weak-bilipschitz}
  There exist constants $k > 0$ depending only on $\beta$ and $\zeta_0,A_0 > 0$ depending on $k$ and $K$ so that the following holds.  Let $\zeta < \zeta_0$, $A > A_0$, $\epsilon \in (0,1/10)$, $Q \in M_A(\zeta,Q_0)$.  If $x,y \in 2Q \cap Q_0 \cap DC(\beta,\epsilon, 16\ell(Q_0))$ are such that $d(x,y) > b \diam(Q)$, then
  \begin{align*}
    |\phi(x) - \phi(y)| \geq k^{-1} d(x,y).
  \end{align*}
\end{lemma}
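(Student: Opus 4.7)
The plan is to argue by contradiction, assuming $|\phi(x)-\phi(y)| < k^{-1} d(x,y)$ for $k$ large to be chosen in terms of $\beta$ (and the absolute constant $b$), and to derive a contradiction with the $M_A(\zeta,Q_0)$ hypothesis once $\zeta$ is small and $A$ is large, depending on $k$ and $K$. Fix a scale $r = b\diam(Q)/(C_1 n)$ for an absolute constant $C_1$. Then $B(x,r)$ and $B(y,r)$ are disjoint because $d(x,y) > b\diam(Q)$, both lie inside the enlargement $\hat Q \subseteq Q_0$ (since $x,y \in 2Q \cap Q_0$), and $r < 16\ell(Q_0)$ so that David's condition applies.

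For the lower bound, I would apply $DC(\beta,\epsilon,16\ell(Q_0))$ at $x$ and $y$: each image $\phi(B(x,r)\cap K)$ and $\phi(B(y,r)\cap K)$ covers at least a $(1-\epsilon)$-fraction of $B(\phi(x),\beta r)$ and $B(\phi(y),\beta r)$ respectively. Choosing $k \gg C_1 n/(\beta b)$ forces $|\phi(x)-\phi(y)| \ll \beta r$, so the two target balls nearly coincide. A direct set-theoretic computation then gives
\[|\phi(B(x,r)\cap K) \cap \phi(B(y,r)\cap K)| \geq c_1(\epsilon,n)(\beta r)^n,\]
where $c_1 > 0$ depends only on $\epsilon$ and $n$.

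For the matching upper bound, I would let $\tilde R_x$, $\tilde R_y$ be the unions of $R \in \Delta_{j(Q)}$ meeting $B(x,r)$, $B(y,r)$ respectively; since $x,y \in 2Q$ and such $R$ are at the same dyadic level as $Q$, the bounds in \eqref{e:cube-diam} force each $R \subset \hat Q$ and each $R$ to be an $A$-neighbor of $Q$ provided $A \geq A_0$ for some absolute $A_0$. Writing $D_Q = |\phi(Q\cap K)|/\mu(Q)$ and applying the neighbor inequality of $M_A(\zeta,Q_0)$ to each $R \subset \hat Q$ and the $\hat{}$-inequality to $R = Q$,
\[\sum_{R \in \Delta_{j(Q)},\, R \subset \hat Q} |\phi(R\cap K)| \leq (1+\zeta) D_Q \mu(\hat Q) \quad\text{and}\quad |\phi(\hat Q\cap K)| \geq (1+\zeta)^{-1} D_Q \mu(\hat Q).\]
The elementary multiplicity-function inequality (for sets $\{A_i\}$, $|A_i \cap A_j| \leq \sum_k |A_k| - |\bigcup_k A_k|$) then bounds any pairwise overlap $|\phi(R\cap K)\cap \phi(R'\cap K)|$ by $3\zeta D_Q\mu(\hat Q) \leq C_K \zeta \mu(Q)$. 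Summing over the $K$-boundedly-many pairs in $\tilde R_x \times \tilde R_y$ yields $|\phi(B(x,r)\cap K)\cap \phi(B(y,r)\cap K)| \leq C_K' \zeta \ell(Q)^n$.

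Comparing the two bounds and using $r \asymp b\ell(Q)/n$ gives $c_1(\beta b/n)^n \ell(Q)^n \leq C_K'\zeta \ell(Q)^n$, hence $\zeta \geq c_2(\beta,K)$. Choosing $\zeta_0 := c_2/2$ gives the desired contradiction. The main obstacle is extracting a pointwise bound on pairwise image overlaps from the $M_A$ hypothesis, which is only phrased via averaged densities; the multiplicity-function inequality bridges this, but requires summing the density comparison over the entire same-level decomposition of $\hat Q$. A secondary technical point is that $B(x,r)$ need not sit inside a single dyadic cube, which is dispatched by inflating to the finite union $\tilde R_x$.
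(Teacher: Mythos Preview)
Your overall plan is the same as the paper's: from the failure of the biLipschitz estimate and the $DC$ hypothesis you extract a large image overlap $|\phi(B(x,r)\cap K)\cap\phi(B(y,r)\cap K)|$, and then the $M_A(\zeta)$ hypothesis should force this overlap to be small, a contradiction once $\zeta$ is small. The lower bound step is fine.

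The gap is in the upper bound. You cover $B(x,r)$ and $B(y,r)$ by cubes at level $j(Q)$ and then invoke the multiplicity inequality $|A_i\cap A_j|\le \sum_k|A_k|-|\bigcup_k A_k|$ on the family $\{\phi(R\cap K):R\subset\hat Q\}$. That inequality is only useful for pairs with $R\neq R'$; when $R=R'$ the term is $|\phi(R\cap K)|$, which by the $M_A$ hypothesis is comparable to $|\phi(Q\cap K)|$, not to $\zeta\,\ell(Q)^n$. And at level $j(Q)$ you \emph{cannot} rule out $R=R'$: the assumption only gives $d(x,y)>b\diam(Q)$ with the absolute constant $b<1$, while cubes $R\in\Delta_{j(Q)}$ have diameter up to $32\,\ell(Q)\ge\diam(Q)$ by \eqref{e:cube-diam}. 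So a single cube at level $j(Q)$ can easily meet both $B(x,r)$ and $B(y,r)$, and then your bound ``$|\phi(B(x,r)\cap K)\cap\phi(B(y,r)\cap K)|\le C_K'\zeta\,\ell(Q)^n$'' fails.

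The paper handles exactly this point by first descending to the largest level $j_1\le j(Q)$ for which the cube-neighbourhoods $T_{j_1}(x)$ and $T_{j_1}(y)$ are disjoint; the lower bound $d(x,y)>b\diam(Q)$ forces $j(Q)-j_1\le C$ for an absolute $C$, so cubes at level $j_1$ are still $A$-neighbours of $Q$ once $A\ge A_0(K)$. With disjointness in hand the multiplicity argument (run inside $\hat Q_1$ for a suitable neighbour $Q_1$ of $Q$, since the finer cubes need not sit in $\hat Q$) gives the desired small upper bound. Your proof can be repaired by making this same descent; as written, the ``same dyadic level as $Q$'' choice is the source of the error.
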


\begin{proof}
  Given a $j \in \Z$, we define
  \begin{align*}
    T_j(x) = \bigcup \{Q \in \Delta_j : Q \cap B(x,16^j) \neq \emptyset\}.
  \end{align*}
  It is clear then that $B(x,16^j) \subseteq T_j(x) \subseteq B(x,16^{j+2})$.

  The proof follows the proof of Proposition 9.36 of \cite{semmes}.  Let us suppose that the conclusion does not hold, so that $|\phi(x)-\phi(y)|< d(x,y)/k$ for some $k > 0$ to be determined, and seek a contradiction.  Let $j_1$ be the largest integer at most $j(Q)$ so that
  \begin{align*}
    T_{j_1}(x) \cap T_{j_1}(y) = \emptyset.
  \end{align*}
  As $d(x,y) > b \diam(Q)$, we get that there exists some absolute constant $C > 0$ so that
  \begin{align}
    0 \leq j(Q) - j_1 \leq C. \label{e:j1-big}
  \end{align}
  We will show there exists some $c > 0$ and $k > 0$ depending only on $\beta$ so that
  \begin{align}
    |\phi(B(x,16^{j_1}) \cap K) \cap \phi(B(y,16^{j_1}) \cap K)| \geq c |\phi(Q \cap K)|. \label{e:ball-overlap}
  \end{align}
  This then proves that
  \begin{align}
    |\phi(T_{j_1}(x) \cap K) \cap \phi(T_{j_1}(y) \cap K)| \geq c |\phi(Q \cap K)|, \label{e:large-overlap}
  \end{align}
  which is equation (9.50) of \cite{semmes}.  The rest of the proof will just continue as in the proof of Proposition 9.36 after Remark 9.56 of \cite{semmes}.  We quickly go over the idea of the argument although we leave the details to the reader.
  
  By choosing $A$ large enough, we can find a neighbor $Q_1$ of $Q$ such that $\widehat{Q}_1$ contains both $T_{j_1}(x)$ and $T_{j_1}(y)$.  By letting $A$ be sufficiently large again, we can also ensure that the cubes of $\Delta_{j_1}$ that make up $\widehat{Q}_1$ are all neighbors of $Q$.  Thus, as $Q \in M_A(\zeta)$, we have that each of these cubes $W \in \Delta_{j_1}$ so that $W \subset \widehat{Q}_1$ satisfies
  \begin{align}
    \frac{|\phi(W \cap K)|}{\mu(W)} \leq (1 + \zeta) \frac{|\phi(Q \cap K)|}{\mu(Q)}. \label{e:W-preserve}
  \end{align}
  We also have that 
  \begin{align}
    (1+\zeta)^{-1} \frac{|\phi(Q \cap K)|}{\mu(Q)} \leq \frac{|\phi(\widehat{Q}_1 \cap K)|}{\mu(\widehat{Q}_1)}. \label{e:Q1-preserve}
  \end{align}
  If one chooses $\zeta$ small enough, one can get that \eqref{e:Q1-preserve} is not consistent with \eqref{e:large-overlap} and \eqref{e:W-preserve}.  This is because \eqref{e:large-overlap} says that the images of $T_{j_1}(x)$ and $T_{j_1}(y)$ overlap and eat up too much of each other's measure and \eqref{e:W-preserve} says that the other cubes of $\hat{Q}_1$ cannot make up for this lost measure.  This will ensure that we cannot satisfy \eqref{e:Q1-preserve}.  We thus have a contradiction and so our initial assumption that $|\phi(x) - \phi(y)| < k^{-1} d(x,y)$ was false.  The reader can consult the proof of Proposition 9.36 in \cite{semmes} for full details.

  We now prove \eqref{e:ball-overlap}.  By Lemma 9.10 of \cite{semmes}, if we choose $A$ to be large enough, then $T_{j_1}(x)$ and $T_{j_1}(y) \subseteq Q$.  Thus, as $\epsilon \in (0,1/4)$ and $x,y \in DC(\beta,\epsilon, 16\ell(Q_0))$, we get that
  \begin{align*}
    |B(\phi(x),\beta 16^{j_1}) \cap \phi(B(x, 16^{j_1}) \cap K)| &\geq \frac{3}{4} |B(\phi(x),\beta 16^{j_1})|,\\
    |B(\phi(y),\beta 16^{j_1}) \cap \phi(B(y, 16^{j_1}) \cap K)| &\geq \frac{3}{4} |B(\phi(y),\beta 16^{j_1})|.
  \end{align*}
  Note that
  \begin{align*}
    d(x,y) \leq 3 \diam(Q) \overset{\eqref{e:cube-diam} \wedge \eqref{e:j1-big}}{\leq} 96 \cdot 16^C 16^{j_1}.
  \end{align*}
  Thus, if $k$ is larger than some constant depending only on $\beta$, then
  \begin{align*}
    |\phi(B(x,16^{j_1}) \cap K) \cap \phi(B(y, 16^{j_1}) \cap K)| \geq \frac{1}{4} |B(\phi(x),\beta 16^{j_1})| = (*).
  \end{align*}
  By \eqref{e:cube-diam}, $Q$ has diameter no more than $32 \cdot 16^{j(Q)} \leq 32 \cdot 16^C 16^{j_1}$.  Thus, $Q \subseteq B(x,3\diam(Q)) \subseteq B(x, 96 \cdot 16^C 16^{j_1})$.  As $\phi$ is 1-Lipschitz, we have that
  \begin{align*}
    (*) \geq \frac{\beta^n}{384^n 16^{Cn}} |B(\phi(x), 96 \cdot 16^C 16^{j_1})| \geq \frac{\beta^n}{384^n 16^{Cn}} |\phi(Q \cap K)|.
  \end{align*}
  This finishes the proof of \eqref{e:ball-overlap}.
\end{proof}

The following proposition says that the set of cubes not in $\SI(\delta)$, $\LD(\eta)$, and $M_A(\zeta,Q_0)$ satisfy a $K$-Carleson estimate.  It is mostly proven in \cite{semmes}, but we will require some nontrivial changes.  The proof will be given in Appendix B.

\begin{proposition} \label{p:M-carleson}
  Let $\eta,\delta > 0$ and $A > 1$.  Then there exists some $\zeta_0 > 0$ so that for each $\zeta < \zeta_0$ there exists some $C > 0$ depending only these constants and $K$ so that
  \begin{align*}
    \sum_{Q \in \Delta(Q_0) \backslash (M_A(\zeta,Q_0) \cup \SI(\delta,Q_0) \cup \LD(\eta,Q_0))} \mu(Q \cap K) \leq C \mu(Q_0).
  \end{align*}
\end{proposition}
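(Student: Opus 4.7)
The plan is to adapt the Carleson estimate from \cite[Section 9]{semmes} to our setting, where Ahlfors-regular measure estimates are only available on $K$. The central quantity is
\[F(Q) := \frac{|\phi(Q \cap K)|}{\mu(Q)}, \qquad Q \in \Delta(Q_0).\]
An upper bound $F(Q) \leq C_K$ follows from the $1$-Lipschitz property of $\phi$ together with \eqref{e:cube-diam} and \eqref{e:cube-growth}; a lower bound $F(Q) \geq \delta$ holds for $Q \notin \SI(\delta)$, by taking $W = Q$ in the definition of $\SI$. Hence $\log F$ takes values in the bounded interval $[\log\delta,\log C_K]$ on the complement of $\SI(\delta)$. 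Write $\calG := \Delta(Q_0) \setminus (M_A(\zeta) \cup \SI(\delta) \cup \LD(\eta))$.

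I would split $\calG$ according to why $Q$ fails the $M_A(\zeta)$ condition: either (a) some $A$-neighbor $R$ of $Q$ satisfies $R \not\subseteq Q_0$ (so $Q$ is near $\partial Q_0$), or (b) some neighbor $R$ (or its hatted set $\hat{R}$) witnesses a multiplicative jump in $F$ by a factor exceeding $1+\zeta$. Case (a) is controlled directly by \eqref{e:small-boundaries}: the cubes in this family at scale $16^k$ lie in a $C_A 16^k$-collar of $\partial Q_0$, and summing the resulting geometric series over $k \leq j(Q_0)$ yields a Carleson bound. For case (b) I would follow Semmes's telescoping argument: attach to each such $Q$ a witness neighbor $R_Q$ achieving $|\log F(R_Q) - \log F(Q)| \geq \log(1+\zeta)$, and use the bounded range of $\log F$ combined with discrete integration-by-parts over the dyadic tree to obtain
\[\sum_{Q \in \calG \cap \Delta(Q_0)} \mu(Q) \leq \frac{C(K,\delta,\eta)}{\log(1+\zeta)}\mu(Q_0).\]
Taking $\zeta < \zeta_0$ sufficiently small ensures the constants in the telescoping argument are uniformly controlled. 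The proposition then follows since $\mu(Q \cap K) \leq \mu(Q)$.

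The main obstacle, and the reason $\LD(\eta)$ is excluded in the statement, is that a neighbor cube $R$ with $\mu(R \cap K)$ very small automatically has $F(R)$ artificially small (since \eqref{e:mu-comparison} gives $|\phi(R\cap K)| \lesssim \Hd^n(R\cap K) \lesssim \mu(R\cap K)$), and so would mechanically force $Q \notin M_A(\zeta)$ even when the geometry is otherwise well-behaved. Excluding $\LD(\eta)$ from $\calG$ rules out the worst such cubes, and Lemma \ref{l:Lambda-bound} absorbs the remaining contribution from cubes that are neighbors of $\LD$-cubes but do not themselves lie in $\LD$, since the $\LD$-boundary sits in a set of $\mu$-mass at most $\eta \mu(Q_0)$. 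The remaining work is a notationally heavy but essentially routine transliteration of \cite{semmes} with $\mu$ systematically replaced by $\mu \llcorner K$ in the appropriate places.
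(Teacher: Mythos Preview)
Your proposal has a genuine gap in case~(b). The phrase ``discrete integration-by-parts over the dyadic tree'' cannot carry the argument, for two reasons. First, the witness $R_Q$ is an $A$-neighbor of $Q$, i.e.\ a cube at comparable scale and distance, not an ancestor or descendant; there is no tree edge linking $Q$ to $R_Q$ along which to telescope. Second, and more fundamentally, the bounded range of $\log F$ on $\Delta(Q_0)\setminus\SI(\delta)$ is by itself far too weak to force a Carleson bound: nothing you have written rules out an $F$ that oscillates between two fixed values on adjacent same-scale cubes at every generation, which would put every cube outside $\SI\cup\LD$ out of $M_A(\zeta)$ for small $\zeta$ and make the sum diverge. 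What actually excludes such behaviour is the set-theoretic sub-additivity $|\phi(Q\cap K)|\leq\sum_i|\phi(Q_i\cap K)|$ over the children $Q_i$ of $Q$, together with the fact that strict inequality (overlap of child images) produces a quantifiable deficit. This is precisely the engine of Semmes's stopping-time-region machinery (his Sections~2--7, not Section~9), which the paper reproduces in Appendix~B: one builds maximal regions on which $F$ is nearly constant (Proposition~\ref{p:3.6}), shows their top cubes are $K$-Carleson via Lemma~\ref{l:3.16} and the bounded number of upward $(1+\tau)$-jumps allowed by \eqref{e:lipschitz-volume}, iterates and refines to good stopping-time regions (Propositions~\ref{p:4.2} and~\ref{p:5.5}), and then handles the $\hat R$ condition separately through the family $\calG(\sigma)$ in Proposition~\ref{p:6.13}.

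Your sketch also leaves the third defining condition of $M_A(\zeta)$ (the one comparing $F(Q)$ to $|\phi(\hat R\cap K)|/\mu(\hat R)$) entirely untouched, and your appeal to Lemma~\ref{l:Lambda-bound} for the $\LD$ boundary yields only a measure bound on $\Lambda(\eta)$, not a Carleson packing bound on the cubes that merely neighbor an $\LD$ cube.
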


We can now prove the main result of this section.  We are not keeping track of the number of biLipschitz pieces or the biLipschitz constant---although they can be estimated---as they are not necessary for our purposes.

\begin{proposition} \label{p:local-decompose}
  Let $\eta,\delta,\epsilon > 0$, and $Q_0 \in \Delta$ and let $\phi : K \to \R^n$ be a chart.  Assume $(Q_0 \cap K) \backslash (\Sigma(c\delta^n) \cup \Lambda(\eta))$ has positive measure where $c > 0$ is the constant from Lemma \ref{l:Sigma-bound}.  Then there exists some $C > 0$ depending only on $\delta$, $\eta$, and $K$ so that the following holds.  There exist a finite collection of compact sets $\{F_j\}_{j=1}^M$ in $Q_0 \cap K$ such that $\phi|_{F_j}$ is biLipschitz and
  \begin{align*}
    \mu\left((Q_0 \cap K) \backslash \bigcup F_j\right) \leq \epsilon + \eta \mu(Q_0) + C \mu(Q_0 \backslash DC(\delta, 1/10, 16\ell(Q_0))).
  \end{align*}
\end{proposition}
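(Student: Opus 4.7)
The plan is to follow the corona decomposition approach of Chapter 10 of \cite{semmes}: combine the $K$-Carleson bound of Proposition \ref{p:M-carleson} to control stopping-time tops with Lemma \ref{l:weak-bilipschitz} to deduce biLipschitz behavior inside each corona region. Set $\beta = \delta$ and take the error parameter $1/10$ in Lemma \ref{l:weak-bilipschitz} to obtain $\zeta > 0$, $A > 1$, and $k > 0$ (all depending only on $\delta$ and $K$) for which the weak biLipschitz conclusion applies on cubes in $M_A(\zeta, Q_0)$. Then Proposition \ref{p:M-carleson} with these $\zeta, A$, $\SI$-parameter $c\delta^n$ (where $c$ is from Lemma \ref{l:Sigma-bound}), and the given $\eta$ yields a $K$-Carleson constant $C_0 = C_0(\delta, \eta, K)$ for
\[
\calG := \Delta(Q_0) \setminus \bigl(M_A(\zeta, Q_0) \cup \SI(c\delta^n, Q_0) \cup \LD(\eta, Q_0)\bigr).
\]
Writing $D = DC(\delta, 1/10, 16\ell(Q_0))$ and $E = ((Q_0 \cap K) \cap D) \setminus (\Sigma(c\delta^n) \cup \Lambda(\eta))$, Lemmas \ref{l:Sigma-bound} and \ref{l:Lambda-bound} produce a constant $C_1 = C_1(K)$ with
\[
\mu((Q_0 \cap K) \setminus E) \leq \eta\, \mu(Q_0) + (C_1 + 1)\, \mu(Q_0 \setminus D),
\]
so it suffices to cover $E$ up to measure $\epsilon$ by finitely many compact sets on which $\phi$ is biLipschitz.

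Next I perform a generational stopping-time decomposition. Set $\calS_0 = \{Q_0\}$, and given $\calS_n$, let $\calS_{n+1}$ consist of the maximal cubes $Q \subsetneq T$ (over $T \in \calS_n$) with $Q \notin M_A(\zeta, Q_0)$. Since no cube meeting $E$ can lie in $\SI(c\delta^n) \cup \LD(\eta)$, every member of $\bigcup_{n \geq 1} \calS_n$ that meets $E$ is in $\calG$. Because the elements of each $\calS_n$ are pairwise disjoint, the Carleson estimate gives
\[
\sum_{n \geq 1} \mu\left( \bigcup_{T \in \calS_n} (T \cap E) \right) \leq \sum_{n \geq 1} \sum_{T \in \calS_n} \mu(T \cap K) \leq C_0 \mu(Q_0),
\]
so the set of $x \in E$ that lie in some $T \in \calS_n$ for arbitrarily many $n$ is $\mu$-null. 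For the remaining $x \in E$, let $g(x)$ be the largest $n$ with $x \in T$ for some $T \in \calS_n$, and set $F_T := \{x \in E : g(x) = n,\ x \in T\}$ for each $T \in \calS_n$.

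Finally, for any two distinct $x, y \in F_T$ the smallest cube $Q(x,y) \in \Delta$ with $x \in Q(x,y)$ and $y \in 2Q(x,y)$ lies inside $T$ (since $y \in T \subseteq 2T$) and cannot be contained in any strict descendant stopping top of $T$ (otherwise that top would contain $x$, contradicting $g(x) = n$); the stopping rule then forces $Q(x,y) \in M_A(\zeta, Q_0)$. The choice of $b$ gives $d(x,y) \geq 10 b \diam(Q(x,y))$, so Lemma \ref{l:weak-bilipschitz} applies (using $x, y \in D \cap Q_0$) and produces $|\phi(x) - \phi(y)| \geq k^{-1} d(x,y)$; combined with the global $1$-Lipschitz bound on $\phi$, this shows $\phi|_{F_T}$ is biLipschitz. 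Inner regularity of $\mu$ lets us replace each $F_T$ by a compact subset of essentially full measure, and taking a finite subfamily $\{F_j\}_{j=1}^M$ from the countable collection of such compacts to leave at most $\epsilon$ uncovered measure yields the proposition with $C = C_1 + 1$. The main subtlety is justifying $Q(x,y) \in M_A(\zeta, Q_0)$ within a corona region, which relies on the stopping rule being exactly ``$Q \notin M_A$''; should the application of Lemma \ref{l:weak-bilipschitz} demand additional control of neighbors or $\widehat{Q}$-neighbors, the stopping criterion can be strengthened to stop whenever any such nearby cube leaves $M_A$, which costs only a constant factor in the Carleson bound from Proposition \ref{p:M-carleson}.
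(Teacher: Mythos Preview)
Your overall strategy---use the $K$-Carleson bound from Proposition \ref{p:M-carleson} to control a stopping-time decomposition, then invoke Lemma \ref{l:weak-bilipschitz} inside each corona region---matches the paper's approach and is correct in spirit. However, there is a genuine gap in the final biLipschitz step. For $x,y\in F_T$ with $T\in\calS_n$, you argue that $Q(x,y)\subseteq T$ and that $Q(x,y)$ is not contained in any element of $\calS_{n+1}$; from this you infer $Q(x,y)\in M_A(\zeta,Q_0)$. But that inference is valid only when $Q(x,y)\subsetneq T$: the stopping rule says nothing about $T$ itself, and in fact $T\notin M_A(\zeta,Q_0)$ for every $n\geq 1$ by construction (and also for $n=0$, since $Q_0$ has neighbors outside $Q_0$). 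When $x$ and $y$ lie in children of $T$ that are far apart, one has $y\notin 2R$ for the child $R\ni x$ and hence $Q(x,y)=T$; then Lemma \ref{l:weak-bilipschitz} does not apply and there is no reason $\phi|_{F_T}$ should be biLipschitz.

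The gap is repairable with a further bounded subdivision: split each $F_T$ according to the children $R$ of $T$. Any child $R$ that meets $F_T$ must satisfy $R\in M_A$ (otherwise $R\in\calS_{n+1}$ and $g(x)\geq n+1$), and for $x,y\in F_T\cap R$ one has $Q(x,y)\subseteq R$; either $Q(x,y)=R\in M_A$ or $Q(x,y)\subsetneq T$ and your original argument goes through. The paper handles this issue differently, via the Jones--David--Semmes coding argument: rather than grouping points by their last stopping top, it records for each point a finite word encoding the non-$M_A$ cubes encountered along the way (using the enlarged sets $\tilde Q$), so that two points with the same word can never have $Q(x,y)\notin M_A$. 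The coding is more elaborate than your corona picture, but it disposes of the top-cube case and the neighbor structure in one stroke.
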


\begin{proof}
  The proof will follow the usual biLipschitz decomposition method of \cite{david-semmes,jones}, except we now use Lemma \ref{l:Sigma-bound} and Lemma \ref{l:Lambda-bound} to control the measure of $\Sigma(c\delta^n)$ and $\Lambda(\eta)$ and we localize to a $DC$ set.  We proceed.

  We choose $A,\zeta,k$ depending only on $\delta$ and $K$ so that Lemma \ref{l:weak-bilipschitz} applies to points of $DC(\delta,1/4, 16\ell(Q))$ in the cubes of $M_A(\zeta,Q_0)$ and so that $\zeta$ is also small enough to apply Proposition \ref{p:M-carleson}.  For a cube $Q$, let $\tilde{Q}$ denote the union of cubes in $Q' \in \Delta_{j(Q)}(Q_0)$ so that $Q' \cap 2Q \neq \emptyset$.  For $L \geq 1$, we define the set
  \begin{align*}
    R_L = \left\{ x \in (Q_0 \cap K) \backslash (\Sigma(c\delta^n) \cup \Lambda(\eta)) : \sum_{Q \in \Delta(Q_0) \backslash M_A(\zeta)} \chi_{\tilde{Q}}(x) \geq L \right\}.
  \end{align*}
  As $(Q_0 \cap K) \backslash (\Sigma(c\delta^n) \cap \Lambda(\eta))$ has positive measure, by Proposition \ref{p:M-carleson}, there exists some $L' > 0$ depending only on $A,\zeta,\delta,\eta$ so that
  \begin{align}
    \mu(R_{L'}) < \epsilon. \label{e:small-RL}
  \end{align}
  The usual coding argument from \cite{jones} or Section 2 of \cite{david-semmes} required to decompose the set
  $$(Q_0 \cap DC(\delta,1/10, 16\ell(Q))) \backslash (\Sigma(c\delta^n) \cup \Lambda(\eta) \cup R_{L'})$$
  into the pieces on which $\phi$ is biLipschitz remains unchanged.  Indeed, the fact that we take points from $DC(\delta,1/10, 16\ell(Q))$ allows us to get a weak $k$-biLipschitz behavior from the cubes of $M_A(\zeta)$ by Lemma \ref{l:weak-bilipschitz}.  For the convenience of hte reader, we provide a quick sketch of the argument at the end of this proof.

  We now bound the measure of $(Q_0 \cap K) \backslash \bigcup F_j$.  We have that
  \begin{align*}
    \mu((Q_0 \cap K) \backslash \bigcup F_j) &\leq \mu(R_{L'}) + \mu(\Lambda(\eta)) + \mu(\Sigma(c\delta^n)) + \mu((Q_0 \cap K) \backslash DC(\delta,1/10, 16\ell(Q_0))) \\
    &\overset{\eqref{e:Sigma-bound} \wedge \eqref{e:small-RL}}{\leq} \epsilon + \eta \mu(Q_0) + C \mu(Q_0 \backslash DC(\delta,1/10, 16\ell(Q_0))) \\
    &\qquad + \mu(Q_0 \backslash DC(\delta,1/10, 16\ell(Q_0))) \\
    &\leq \epsilon + \eta \mu(Q_0) + (C+1) \mu(Q_0 \backslash DC(\delta,1/10, 16\ell(Q_0))).
  \end{align*}
  The dependencies for $C$ are $\zeta,A,\delta,\eta$, but $\zeta$ and $A$ depend on $\delta,\eta$ and $K$ so $C$ depends only on $\delta$, $\eta$, and $K$.

  The only thing left is to give a sketch of the encoding argument for biLipschitz decomposition.  Let $\ell$ be a constant large enough so that if $S \in \Delta_{k+\ell}$ and $Q \in \Delta_k$, then $\diam Q < b \diam S$.  For each $k$ and $Q \in \Delta_k(Q_0)$, we let $\calF(Q)$ denote the set of cubes $Q' \in \Delta_k(Q_0)$ so that $Q' \neq Q$ and $Q,Q' \subset \widetilde{S}$ for some $S \in \Delta_{k+\ell}(Q_0) \backslash M_A(\zeta)$.  We get from the volume estimates of the cubes that $\calF(Q)$ can contain at most $T$ elements where $T$ depends only on $K$.

  We let $A$ be a set of $T+1$ distinct characters.  We will associate to each $Q \in \Delta(Q_0)$ a (possibly empty) string of these characters $a(Q)$, which we will call words.  For $Q \in \Delta$, let $Q^*$ denote the unique parent of $Q$.  The words will satisfy the following properties: $a(Q_0) = \emptyset$, if $\calF(Q) = \emptyset$ then $a(Q) = a(Q^*)$, if $\calF(Q) \neq \emptyset$ then $a(Q)$ is constructed by appending a letter on the end of $a(Q^*)$ so that if $Q' \in \calF(Q)$ then
  \begin{itemize}
    \item $a(Q) \neq a(Q')$ if $a(Q)$ and $a(Q')$ are of equal length,
    \item if $a(Q)$ is shorter than $a(Q')$ then $a(Q')$ does not begin with $a(Q)$,
    \item if $a(Q')$ is shorter than $a(Q)$ then $a(Q)$ does not begin with $a(Q')$.
  \end{itemize}
  Such an encoding scheme can be done inductively.  We omit the details although the reader can consult \cite{david} and \cite{jones} for full details.

  If $x \in (Q_0 \cap K) \backslash R_{L'}$, then there there are fewer than $L'$ cubes $Q$ containing $x$ for which $\calF(Q) \neq \emptyset$.  Thus, we see that there must be some $Q \in \Delta$ containing $x$ so that for all $Q' \subset Q$ containing $x$, $a(Q) = a(Q')$, {\it i.e.} the coding stablizes.  We let $a(x) = a(Q)$ for this $Q$.  One can see that $a(Q)$ is a word of at most $L'$ letters.  We then have that $(Q_0 \cap DC(\delta,1/10,16 \ell(Q))) \backslash (R_{L'} \cup \Sigma(c\delta^n) \cup \Lambda(\eta))$ can be decomposed into at most $(T+1)^{L'}$ measurable sets based on what word each point is assigned.
  
  Let $x,y$ be two points of $(Q_0 \cap DC(\delta,1/10,16 \ell(Q))) \backslash (R_{L'} \cup \Sigma(c\delta^n) \cup \Lambda(\eta))$ such that $a(x) = a(y)$.  We want to show that $|\phi(x) - \phi(y)| > k d(x,y)$ for some $k$ independent of $x$ and $y$.  This would show that $\phi$ is biLipschitz on the piece corresponding to $a(x)$.  We may as well suppose $x \neq y$.  Let $S$ be the smallest cube such that $x \in S$ and $y \in 2S$.  If $S \in M_A(\zeta)$, then we get our needed bound from Lemma \ref{l:weak-bilipschitz} once one recalls the definition of $b$ as defined right before the same Lemma.  Otherwise $S \in \Delta(Q_0) \backslash M_A(\zeta)$.  Letting $k$ be so that $S \in \Delta_{k+\ell}$ and $Q,Q' \in \Delta_k$ so that $x \in Q$ and $y \in Q'$, we get that $d(x,y) \geq b \diam S > \diam Q$ and so $Q \neq Q'$.  Thus, $Q' \in \calF(Q)$.  But this is a contradiction of the fact that $a(x) = a(y)$ as the conditions of our encoding scheme ensures that such a situation cannot happen.  This finishes the sketch of the encoding argument.
\end{proof}

\section{Proof of main theorem}

We will need a preliminary lemma.

\begin{lemma} \label{l:cube-decomposition}
  Let $A$ be any bounded and finite measurable set in $K$ and $\epsilon,\lambda > 0$.  There exists some $J \leq k_K$ so that for every $j \leq J$ there exists a finite collection of disjoint cubes $\{Q_k\}_{k=1}^M \subset \Delta_j$ so that $\mu(A \backslash \bigcup_i Q_i) < \epsilon$ and $\mu(Q_i \backslash A) < \lambda \mu(Q_i)$.
\end{lemma}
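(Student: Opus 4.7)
The plan is a classical application of the Lebesgue density theorem, which is available because the density hypotheses make $X$ pointwise doubling (as recorded in the Preliminaries). Let $A^* \subseteq A$ be the full-measure subset of density points, i.e.\ the points $x$ at which $\mu(B(x,r) \cap A)/\mu(B(x,r)) \to 1$ as $r \to 0$.

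The key observation is that, for any $x \in A^*$ and $j \leq k_K$, the unique cube $Q \in \Delta_j$ containing $x$ (which exists for $\mu$-a.e.\ such $x$ by Property 1 of Proposition \ref{p:cubes-1}) is sandwiched between the balls $B(z_Q, 16^{j-1})$ and $B(x, 2\cdot 16^{j+1})$ by Property 4. The bounds \eqref{e:ball-growth} then make $\mu(Q)$ and $\mu(B(x, 2\cdot 16^{j+1}))$ comparable up to a constant depending only on $K$, so
\[
\frac{\mu(Q \setminus A)}{\mu(Q)} \leq C_K \cdot \frac{\mu(B(x, 2\cdot 16^{j+1}) \setminus A)}{\mu(B(x, 2\cdot 16^{j+1}))} \to 0 \quad \text{as } j \to -\infty,
\]
for each $x \in A^*$. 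Hence to each $x \in A^*$ I can associate an integer $J^*(x) \leq k_K$ such that for every $j \leq J^*(x)$, the cube of $\Delta_j$ containing $x$ (if any) satisfies $\mu(Q \setminus A) < \lambda \mu(Q)$.

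The stopping-time sets $G_J := \{x \in A^* : J^*(x) \geq J\}$ are Borel, increase to $A^*$ as $J \to -\infty$, so monotone convergence lets me fix $J \leq k_K$ with $\mu(A \setminus G_J) < \epsilon$. For every $j \leq J$, the collection $\mathcal{Q}_j := \{Q \in \Delta_j : Q \cap G_J \neq \emptyset\}$ then consists of cubes satisfying $\mu(Q \setminus A) < \lambda \mu(Q)$ by construction, and it is finite since the $\mu$-measures of cubes in $\Delta_j$ are bounded below by \eqref{e:cube-growth} while $\mu(X) < \infty$. Finally, since the cubes of $\Delta_j$ cover $K$ up to a $\mu$-null set, $\mu(A \setminus \bigcup \mathcal{Q}_j) \leq \mu(A \setminus G_J) < \epsilon$. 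I do not anticipate any serious obstacle; the one mild technical point is passing from pointwise Lebesgue density on balls to a uniformly good cube selection, which the $G_J$ construction handles cleanly via monotone convergence once the ball-to-cube comparison is in place.
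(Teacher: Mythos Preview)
Your proof is correct and follows essentially the same approach as the paper: Lebesgue density on balls, transferred to cubes via Property~4 of Proposition~\ref{p:cubes-1} and the estimates \eqref{e:ball-growth}/\eqref{e:cube-growth}, followed by selecting the cubes of $\Delta_j$ meeting a large-measure ``uniformly good'' subset of $A$. The only cosmetic difference is that the paper packages the uniformity via an Egorov-style choice of a subset $A'$ and a single radius $R$ (then takes $J$ with $32\cdot 16^J\le R$), whereas you record a pointwise threshold $J^*(x)$ and use monotone convergence of $G_J$; these are equivalent formulations of the same argument.
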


\begin{proof}
  By Lebesgue's differentiation theorem, there exists some $R > 0$ and some subset $A' \subseteq A$ so that $\mu(A \backslash A') < \epsilon$ and
  \begin{align*}
    \mu(B(x,r) \cap A) > (1-c\lambda) \mu(B(x,r)), \qquad \forall r \in (0,R), x \in A'.
  \end{align*}
  Here, $c > 0$ is some constant depending only on $K$ so that if $Q$ is a cube of diameter no more than $R$ which intersects $A'$ nontrivially, then
  \begin{align*}
    \mu(Q \cap A) > (1-\lambda) \mu(Q).
  \end{align*}
  That such a constant exists easily follows from \eqref{e:cube-diam}, \eqref{e:cube-growth}, and \eqref{e:ball-growth}.

  Thus, we can choose $J \leq k_K$ so that $32 \cdot 16^J \leq R$ and for every $j \leq J$, we let $\{Q_i\}_{i=1}^M \subset \Delta_j$ be the cubes that intersect with $A'$ nontrivially.
\end{proof}

We can now prove the main theorem.

\begin{proof}[Proof of Theorem \ref{t:main}]
  As mentioned in the introduction, we simply need to prove that \eqref{i:main-alberti} implies \eqref{i:main-david} and \eqref{i:main-david} implies (R) in the main theorem.

  Recall that, after Section 2, we are reduced to proving that $K$ is $n$-rectifiable.  As discussed in Section 2, after possibly taking a further countable Borel decomposition, Proposition \ref{p:dc-decomposition} proves the implication of \eqref{i:main-david} from \eqref{i:main-alberti}.  It thus suffices to show that any $DC(\beta,1/10,R)$ of positive measure is $n$-rectifiable.  We now fix such a $\beta \in (0,1)$ and $R > 0$.  We may suppose without loss of generality that $DC(\beta,1/10,R)$ also is bounded and has finite measure.

  Let $\epsilon > 0$.  By an application of Lemma \ref{l:cube-decomposition}, there exists some $j \leq k_K$ and a finite collection of cubes $\{Q_i\}_{i=1}^M \subset \Delta_j$ so that $16^{j+1} \leq R$,
  \begin{align}
    \mu\left(DC(\beta,1/10,R) \backslash \bigcup Q_i\right) &< \frac{\epsilon}{4}, \label{e:approx-DC} \\
    \mu(Q_i \backslash DC(\beta,1/10,R)) &< \frac{\epsilon}{8C \mu(DC(\beta,1/10,R))} \mu(Q_i), \qquad \forall i, \label{e:dense-cube}
  \end{align}
  where $C > 0$ is the constant from Proposition \ref{p:local-decompose}.  Assuming that we had initially chosen $\epsilon < 4C\mu(DC(\beta,1/10,R))$, which we can and will, we then get from \eqref{e:dense-cube} that
  \begin{align}
    \sum_{i=1}^M \mu(Q_i) < 2\mu(DC(\beta,1/10,R)). \label{e:cube-sum-small}
  \end{align}
  By choosing $\eta = \epsilon/(8\mu(DC(\beta,1/10,R)))$ and applying Proposition \ref{p:local-decompose}, for every $Q_i$, there exists a finite family of compact subsets $\{F_{i,j}\}_{j=1}^{m_i}$ of $K$ so that $\phi|_{F_{i,j}}$ is biLipschitz and
  \begin{align*}
    \mu\left((Q_i \cap K) \backslash \bigcup_j F^i_j \right) &\leq \frac{\epsilon}{4M} + \frac{\epsilon}{8 \mu(DC(\beta,1/10,R))} \mu(Q_i) + C\mu(Q_i \backslash DC(\beta,1/10, 16^{j+1})) \\
    &\leq \frac{\epsilon}{4M} + \frac{\epsilon}{8 \mu(DC(\beta,1/10,R))} \mu(Q_i) + C\mu(Q_i \backslash DC(\beta,1/10,R)).
  \end{align*}
  We can apply Proposition \ref{p:local-decompose} because we can use Lemma \ref{l:Sigma-bound}, Lemma \ref{l:Lambda-bound}, and \eqref{e:dense-cube} to prove that $(Q_i \cap K) \backslash (\Sigma(c\beta^n) \cup \Lambda(\eta))$ has positive measure whenever $\epsilon$ is chosen small enough.

  We let $F_{i,j}' = F_{i,j} \cap DC(\beta,1/10,R)$, which clearly also satisfies
  \begin{multline}
    \mu\left((Q_i \cap DC(\beta,1/10,R)) \backslash \bigcup_j F_{i,j}' \right) \\
    \leq \frac{\epsilon}{4M} + \frac{\epsilon}{8 \mu(DC(\beta,1/10,R))}\mu(Q_i) + C\mu(Q_i \backslash DC(\beta,1/10,R)). \label{e:F'-dense}
  \end{multline}

  Thus, $\{F_{i,j}'\}_{i=1,j=1}^{M,m_i}$ are a finite collection of bounded subsets of $DC(\beta,1/10,R)$ on each of which $\phi$ is biLipschitz.  The last step is to bound the size of $DC(\beta,1/10,R) \backslash \bigcup_{i,j} F_{i,j}'$.  We have
  \begin{align*}
    &\mu\left( DC(\beta,1/10,R) \backslash \bigcup_{i,j} F_{i,j}' \right) \\
    &\qquad \leq \mu\left( DC(\beta,1/10,R) \backslash \bigcup Q_i \right) + \sum_{i=1}^M \mu\left( (Q_i \cap DC(\beta,1/10,R)) \backslash \bigcup_j F_{i,j}' \right) \\
    &\qquad \overset{\eqref{e:approx-DC} \wedge \eqref{e:F'-dense}}{<} \frac{\epsilon}{2} + \frac{\epsilon}{8\mu(DC(\beta,1/10,R))} \sum_{i=1}^M \mu(Q_i) + C \sum_{i=1}^M \mu(Q_i \backslash DC(\beta,1/10,R)) \\
    &\qquad \overset{\eqref{e:dense-cube} \wedge \eqref{e:cube-sum-small}}{<} \frac{3\epsilon}{4} + \frac{\epsilon}{8\mu(DC(\beta,1/10,R))} \sum_{i=1}^M \mu(Q_i) \overset{\eqref{e:cube-sum-small}}{<} \epsilon.
  \end{align*}

  As $\epsilon > 0$ was arbitrary, this finishes the proof that $DC(\beta,1/10,R)$ is $n$-rectifiable, which, as mentioned before, proves that $K$ is $n$-rectifiable.  Thus, we have shown that \eqref{i:main-david} implies (R), which finishes the proof of Theorem \ref{t:main}.
\end{proof}

\begin{remark}
        Note that in the proof of (R) using (iii) (and in the proof of Proposition \ref{p:M-carleson} to be proven in Appendix B below), the only property we used of the $\R^n$ target to get a biLipschitz decomposition of $\phi$ is that it is Ahlfors $n$-regular and so its balls satisfy the estimate $|B(x,r)| \geq C r^n$ for some $C > 0$.  This is most crucially used in the proof of Lemma \ref{l:Sigma-bound} to prove that $\mu(Q_0 \backslash DC(\delta,1/10,\ell(Q))$ bounds the measure of $\Sigma(c\delta^n)$.  Specifically, it is used when deducing that $Q \not \in \SI(c\delta^n)$ from \eqref{e:qnotinsi}.  This allowed to us show in the proof of Theorem \ref{t:main} that the volume of the $\SI$ cubes can be made negligible.  Ahlfors regularity was used again in Lemma \ref{l:weak-bilipschitz} to prove \eqref{e:ball-overlap}.  However, this use of Ahlfors regularity can be completely avoided with a little more work.  See the proof of Claim 9.49 and Remark 9.56 in \cite{semmes} for more detail.


  Thus, we have also proven the following theorem.
  \begin{theorem} \label{t:general-decomp}
    Let $(X,d,\mu)$ a metric measure space such that
    \begin{align*}
      0 < \Theta_*^n(\mu;x) \leq \Theta^{*,n}(\mu;x) < \infty
    \end{align*}
    for a.e. $x$ and $\phi : X \to (Y,\rho)$ Lipschitz such that $(Y,\rho,\Hd^n)$ is Ahlfors $n$-regular.  If $X$ satisfies David's condition a.e. with respect to $\varphi$, then there exists a countable number of Borel sets $U_i \subset X$ such that $\mu\left(X \backslash \bigcup_i U_i\right) = 0$ and $\phi|_{U_i} \colon U_i \to Y$ is biLipschitz.
  \end{theorem}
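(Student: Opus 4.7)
The plan is to retrace the proof that (\ref{i:main-david}) implies (R) in Theorem \ref{t:main}, replacing the target $\R^n$ with $(Y,\rho,\Hd^n)$ and Lebesgue measure on $\R^n$ with $\Hd^n$ on $Y$ throughout, and stopping at the point where $\phi$ has been shown biLipschitz on a large compact subset (rather than invoking Kirchheim to deduce rectifiability of the source). The Section \ref{s:preliminaries} reduction to a compact set $K \subset X$ with Ahlfors regular ball estimates, together with the dyadic system $\Delta$ of Proposition \ref{p:cubes-1}, is purely source-side and needs no modification. We reinterpret $DC(\beta,\epsilon,R)$ with $|\cdot|$ standing for $\Hd^n$ on $Y$ and balls taken in $Y$; by the hypothesis that $X$ satisfies David's condition a.e.\ with respect to $\phi$, these sets cover $X$ up to a null set as $\beta\to 0$ and $R\to 0$, and it suffices to decompose an arbitrary $DC(\beta,1/10,R)$ of positive measure.

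Next I would check each of the supporting statements behind Proposition \ref{p:local-decompose}. Lemma \ref{l:Lambda-bound} is source-side and unchanged. In Lemma \ref{l:Sigma-bound} the only appearance of the target is the estimate \eqref{e:qnotinsi}, where one needs $\Hd^n(B(y,r)) \gtrsim r^n$; this is precisely the lower Ahlfors bound on $Y$, so the lemma transfers verbatim. The same substitution handles the target-side estimate in the proof of Proposition \ref{p:M-carleson} (cf.\ the remark preceding the theorem).

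The main obstacle, as anticipated in that remark, is Lemma \ref{l:weak-bilipschitz}, whose conclusion is used through the overlap estimate \eqref{e:ball-overlap}. Because $(Y,\rho,\Hd^n)$ is Ahlfors $n$-regular by hypothesis, both the lower bound $\Hd^n(B(y,r)) \gtrsim r^n$ (used to estimate from below the overlap of the two $\beta 16^{j_1}$-balls around $\phi(x)$ and $\phi(y)$) and the upper bound $\Hd^n(B(y,r)) \lesssim r^n$ (used to bound $\Hd^n(\phi(Q\cap K))$ by $\Hd^n$ of a surrounding $Y$-ball) are in force, and the original argument goes through unchanged, yielding \eqref{e:ball-overlap} with constants depending only on $\beta$ and the regularity constant of $Y$, hence the required contradiction with \eqref{e:Q1-preserve}. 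Alternatively, following Claim 9.49 and Remark 9.56 of \cite{semmes}, one can secure \eqref{e:ball-overlap} from the lower bound alone, thereby avoiding the upper bound entirely.

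With these ingredients, the encoding argument at the end of Proposition \ref{p:local-decompose} depends only on source-side combinatorics and the adapted lemmas, so it transfers without change. The final assembly mirrors the proof of Theorem \ref{t:main}: apply Lemma \ref{l:cube-decomposition} to a bounded finite-measure piece of $DC(\beta,1/10,R)$ to partition its bulk into finitely many cubes on each of which $DC$ has high relative density, then apply Proposition \ref{p:local-decompose} inside each such cube to obtain finitely many compact sets on which $\phi$ is biLipschitz. A standard exhaustion across $\beta$ and $R$ and over the countable decomposition of $X$ into the pieces $U(j,k)$ of Section \ref{s:preliminaries} yields the required countable family $\{U_i\}$ of Borel sets covering $X$ up to a null set with $\phi|_{U_i}$ biLipschitz.
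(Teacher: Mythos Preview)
Your proposal is correct and follows essentially the same approach as the paper: the paper's own justification of Theorem \ref{t:general-decomp} is the remark immediately preceding it, which observes that in the chain of arguments proving \eqref{i:main-david} $\Rightarrow$ (R) the only properties of the target used are the Ahlfors $n$-regular ball estimates (the lower bound in Lemma \ref{l:Sigma-bound} via \eqref{e:qnotinsi}, and either both bounds or, via Claim 9.49 and Remark 9.56 of \cite{semmes}, just the lower bound in Lemma \ref{l:weak-bilipschitz}). You have identified exactly these pressure points and the alternative route through \cite{semmes}, so there is nothing to add.
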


  Note that we really only needed one direction of the Ahlfors regularity assumption of $\Hd^n$ for $\phi(X)$ (remembering that we do not need Ahlfors regularity in Lemma \ref{l:weak-bilipschitz}).  Thus, we can relax our conditions for $Y$ in Theorem \ref{t:general-decomp} to general metric space targets with the addition of this assumption on $\phi(X)$.  We may further relax this condition so that, for almost every $x \in X$, $\phi(x)$ has a positive lower $n$-density in $\phi(X)$.  This will only require one further countable Borel decomposition of $X$ into the sets where a lower $n$-density estimate holds below a certain radius for a certain lower bound, as has been done above for other similar properties.
\end{remark}

\appendix

\section{Proof of Proposition \ref{p:cubes-1}}
In this appendix, we go over the outline of the proof of Proposition \ref{p:cubes-1}.  The proof of this theorem is essentially that of Theorem 11 of \cite{christ}, which a good understanding of will be helpful.  In the areas where we need to make nontrivial changes, we will go over the changes carefully.  We may suppose without loss of generality that $\mu(K) > 0$.  Our lemmas and propositions will be numbered A.X' where X is the numbering of the corresponding lemma or proposition in \cite{christ}.  Let $k_K \in \Z$ be maximal so that $16^{k_K + 2} \leq R$.

We begin by defining a collection of nets $\{z_\alpha^k : k \leq k_K, \alpha \in I_k\} \subseteq K$ so that
\begin{align*}
  d(z_\alpha^k, z_\beta^k) \geq 16^k, \qquad \forall \alpha \neq \beta
\end{align*}
and for each $k$ and $x \in K$, there exists $\alpha \in I_k$ so that $d(x,z_\alpha^k) < 16^k$.

\begin{customlemma}{A.13'}
  There exists at least one partial ordering of the set $\{(k,\alpha) : k \leq k_K, \alpha \in I_k\}$ so that
  \begin{enumerate}[(a)]
    \item $(k,\alpha) \leq (\ell,\beta) \Rightarrow k \leq \ell$,
    \item For each $(k,\alpha)$ and $\ell \in \{k,...,k_K\}$, there exists a unique $\beta$ such that $(k,\alpha) \leq (\ell,\beta)$,
    \item $(k-1,\beta) \leq (k,\alpha) \Rightarrow d(z_\alpha^k, z_\beta^{k-1}) < 16^{k-1}$,
    \item $d(z_\alpha^k,z_\beta^{k-1}) < \frac{1}{2}16^{k-1} \Rightarrow (k-1,\beta) \leq (k,\alpha)$.
  \end{enumerate}
\end{customlemma}
This partial ordering is easily constructed by using the net properties of the $z_\alpha^k$.  There may be some choices that need to be made, which would reflect the fact that there could be multiple partial orderings, but these choices will not affect the result.  We let the reader consult the proof of Lemma 13 of \cite{christ} for the complete details.

Choose one such partial ordering.  We then define
\begin{align*}
  Q_\alpha^k = \bigcup_{(\ell,\beta) \leq (k,\alpha)} B(z_\beta^\ell,16^\ell/8).
\end{align*}

Properties 2-4 of Proposition \ref{p:cubes-1} are easily verifiable by our construction of the $Q_\alpha^k$ and so we omit the proofs.  We actually get the improved estimate
\begin{align}
  Q_\alpha^k \subseteq B\left(z_\alpha^k, \frac{3}{2} \cdot 16^k\right). \label{e:improved-diam}
\end{align}
For full details, consult the proof of Theorem 11 of Lemma 13.  Property 6 follows easily from Property 4, \eqref{e:ball-growth-hyp}, and the fact that $16^{k_K + 2} \leq R$.

The proof of Property 1 is also easy.  Let $E = \bigcup_\omega Q_\omega^k$ for some $k \leq k_K$ and $x \in K \backslash E$.  Let $n \leq k$.  Then there exists some $z_\alpha^n$ such that $d(z_\alpha^n,x) < 16^n$.  Note that $B(z_\alpha^n,16^n/8) \subset E$ by construction.  Thus, $B(z_\alpha^n,16^n/8) \subseteq B(x,16^{n+1})$ and as $x,z_\alpha^n \in K$, we get that
\begin{align*}
  \frac{\mu(E \cap B(x,16^{n+1}))}{\mu(B(x,16^{n+1}))} \geq \frac{\mu(B(z_\alpha^n,16^n/8))}{\mu(B(x,16^{n+1}))} \geq c > 0,
\end{align*}
for some $c > 0$ depending only on the constants of \eqref{e:ball-growth-hyp}.  As this holds for all $n \leq k$ and $x \notin E$, we get by Lebesgue density theorem that $\mu(K \backslash E) = 0$.

It remains to prove Property 5 (known as the small boundaries condition), which requires more substantial changes to the proof of the analogous property of Theorem 11 of \cite{christ}.  We will need the following lemma:
\begin{customlemma}{A.17'} \label{l:modified-17}
  For any $\epsilon > 0$, there exists $N \in \N$ such that for every $Q_\alpha^k$,
  \begin{align*}
    \mu\{x \in Q_\alpha^k : \exists \sigma \in I_{k-N} \text{ so that } x \in Q_\sigma^{k-N}, \dist(Q_\sigma^{k-N},X \backslash Q_\alpha^k) < 50 \cdot 16^{k-N}\} < \epsilon \mu(Q_\alpha^k).
  \end{align*}
\end{customlemma}

\begin{proof}
  The proof of the modified Lemma 17 is similar to the proof of the original.  Let $x \in Q_\sigma^{k-N} \cap Q_\alpha^k$ as above for some $N$ to be determined.  Then we have that there exists a unique chain of cubes
  \begin{align*}
    Q^{k-N}_\sigma = Q_{\sigma_{k-N}}^{k-N} \subset Q_{\sigma_{k-N+1}}^{k-N+1} \subset \cdots \subset Q_{\sigma_k}^k = Q_\alpha^k.
  \end{align*}
  We let $z_j = z_{\sigma_j}$ be the points as in Property 4.
  
  We claim that there exists some absolute constant $\epsilon_1 > 0$ so that $d(z_i,z_j) \geq 2 \epsilon_1 16^j$ when $k-N+3 \leq i < j \leq k$.  Suppose not.  Then
  \begin{multline*}
    \dist(z_j, X \backslash Q_\alpha^k) \leq \dist(x,X \backslash Q) + d(z_j,x) \leq 100 \cdot 16^{k-N} + d(z_j,z_i) + d(z_i,x) \\
    \overset{\eqref{e:improved-diam}}{\leq} 100 \cdot 16^{k-N} + 2 \epsilon_1 16^j + \frac{3}{2} \cdot 16^i = (*).
  \end{multline*}
  By choosing $\epsilon_1$ smaller than some absolute constant and using the fact that $i \geq k-N+3$, we get that
  \begin{align*}
    (*) < \frac{1}{8} \cdot 16^j.
  \end{align*}
  But this is a contradiction of the fact that $B(z_j,16^j/8) \subseteq Q_\alpha^k$.

  Now for each $x \in \bigcup\{Q_\sigma^{k-N} \subset Q_\alpha^k : d(Q_\sigma^{k-N},X \backslash Q_\alpha^k) < 50 \cdot 16^{k-N}\}$, we can construct a chain of cubes and $z_{\beta(x,j)}$ as above for $k-N \leq j \leq k$.  Let $S_j$ be the collection of all points $z_{\beta(x,j)}$ for all such $x$.  Let $G_j = \bigcup_{z \in S_j} B(z,\epsilon_1 16^j)$.  Then we have that the $G_j$ are disjoint.

  Let
  \begin{align*}
    E = \{x \in Q_\alpha^k : \exists \sigma \in I_{k-N} \text{ so that } x \in Q_\sigma^{k-N}, \dist(Q_\sigma^{k-N},X \backslash Q_\alpha^k) < 50 \cdot 16^{k-N}\}.
  \end{align*}
  We have for any $k \leq j \leq k-N+3$ that
  \begin{align*}
    \mu(E) &\leq \mu \left( \bigcup_{z \in S_{k+N}} B(z, 16^{k+N+1}) \right) \\
    &\leq C' \sum_{z \in S_{k+N}} \mu(B(z,\epsilon_1 16^{k+N})) \\
    &= \sum_{w \in S_j} \sum_{z \in S_{k+N}, z \leq w} \mu(B(z,\epsilon_1 16^{k+N})) \\
    &\leq C' \sum_{w \in S_j} \mu(B(w,16^{j+1})) \\
    &\leq C' \sum_{w \in S_j} \mu(B(w,\epsilon_1 16^j)) \leq C' \mu(G_j).
  \end{align*}
  Here, the first inequality comes from Property 4 of the cubes.  The second and fourth inequalities come from the \eqref{e:ball-growth-hyp}, the fact that the $S_j$ sets are in $K$, and the disjointness of the $B(z,\epsilon_1 16^{k+N})$.  The third inequality comes from Property 4 of the cubes.

  As the $G_j$ are all disjoint, one gets that
  \begin{align*}
    \mu(Q) \geq \sum_{j=k}^{k-N+3} \mu(G_j) \geq (N-3) C'^{-1} \mu(E).
  \end{align*}
  Thus, choosing $N > C'\epsilon^{-1}+3$ allows us to conclude that $\mu(E) < \epsilon \mu(Q)$.
\end{proof}

The proof of Property 5 now follows the proof in Theorem 11 of \cite{christ}.  One defines
\begin{align*}
  E_j(Q_\alpha^k) = \{Q_\beta^{k-j} \subset Q_\alpha^k : \dist(Q_\beta^{k-j},X \backslash Q_\alpha^k) \leq 50 \cdot 16^{k-j}\}
\end{align*}
and $e_j(Q_\alpha^k) = \bigcup_{Q \in E_j(Q_\alpha^k)} Q$.  One can verify that if $x \in Q_\alpha^k \cap K$ so that $d(x,X \backslash Q_\alpha^k) < \tau 16^k$, then $x \in e_j(Q_\alpha^k)$ where $16^{k-j-1} \geq \tau 16^k$.  For full details, the reader can consult the second paragraph of proof of Lemma 17 of \cite{christ}.

Thus, to prove Property 5, it suffices to prove that that there is some $C' > 0$ so that
$$\mu(e_j(Q_\alpha^k) \cap K) \leq C' 16^{-j\eta} \mu(Q_\alpha^k), \qquad \forall \alpha, k, \forall j \geq 0.$$
Lemma \ref{l:modified-17} says that there exists some $J \geq 0$ so that
\begin{align}
  \mu(e_J(Q_\alpha^k)) \leq \frac{1}{2} \mu(Q_\alpha^k), \qquad \forall k,\alpha. \label{e:boundary-drop}
\end{align}
Define $F_n(Q_\alpha^k)$ to be a collection of all $Q_\beta^{k-nJ} \subset Q_\alpha^k$ as follows.  For $n = 1$, $F_1(Q_\alpha^k) = E_J(Q_\alpha^k)$.  The we define inductively
\begin{align*}
  F_{\ell+1}(Q_\alpha^k) = \bigcup_{Q_\beta^{k-\ell J} \in F_\ell(Q_\alpha^k)} E_J(Q_\beta^{k-\ell J}).
\end{align*}
Letting $f_n(Q_\alpha^k) = \bigcup_{S \in F_n(Q_\alpha^k)} S$, we get by iteration of \eqref{e:boundary-drop} that
\begin{align*}
  \mu(f_m(Q_\alpha^k)) \leq 2^{-m} \mu(Q_\alpha^k).
\end{align*}
Thus, we get
\begin{align*}
  \mu(e_{mJ}(Q_\alpha^k)) \leq 2^{-m} \mu(Q_\alpha^k) = 16^{-\eta mJ} \mu(Q_\alpha^k), \qquad \forall m \geq 0.
\end{align*}
This finishes the proof. \qed

\section{Proof of Proposition \ref{p:M-carleson}}
\subsection{Introduction}
The proof of Proposition \ref{p:M-carleson} closely follows the proof of Proposition 7.8 in \cite{semmes}, which, needless to say, a good understanding of will be necessary.  Most of the proof will only require superficial changes.  Thus, for convenience, we will not go into much details in these parts although we will try to prove a general overview of how the proof works.  There are some parts where we will have to make nontrivial modifications; most notably, we have to add an extra condition to a stopping time process defined in \cite{semmes}.  We will go through these modifications carefully.  Our lemmas and propositions will be numbered B.X.Y' where X.Y is the numbering of the corresponding lemma or proposition in \cite{semmes}.

Recall that we have $K \subset X$ satisfying \eqref{e:ball-growth}, a collection of cubes $\Delta$ satisfying Proposition \ref{p:cubes-1}, and a 1-Lipschitz map $\phi : K \to \R^n$.  We will recall further necessary concepts from \cite{semmes} in the relevant subsections below.  Also recall the families of cubes $\LD(\eta)$ from Section 4, which was not in \cite{semmes}.  This family of cubes (or more precisely, the complement of this family) will play a big role in establishing our $K$-Carleson bound.

Recall a nonempty collection cubes $S \subseteq \Delta$ is called a {\it stopping time region} if there is a top cube $Q(S) \in S$ so that for every other $Q \in S$ such that $Q \subset Q(S)$ and $Q' \in \Delta$ such that $Q \subseteq Q' \subseteq Q(S)$, then $Q' \in S$.  The set of bottom cubes of a stopping time region $S$ is
\begin{align*}
  b(S) = \{Q \in \Delta : Q \subseteq Q(S), Q \notin S, \text{and $Q$ is maximal with respect to these properties} \}.
\end{align*}
Note that these are disjoint cubes contained in $Q(S)$ if they exist ($b(S)$ may in fact be an empty set).

We quickly go over a general outline of this part of the appendix.  In some of the subsections below, we will give an more in depth overview.  Section B.2 establishes some important technical lemmas that allows us to determine when a collection of cubes is $K$-Carleson.  Most of the proofs of the original lemmas can be superficially modified to get our needed result.

Sections B.3 gives our first preliminary decomposition of the set of subcubes of some cube $Q_0$ into stopping time regions for which our map $\phi$ has good measure preserving properties on each stopping time region.  The stopping time regions are shown to not be ``too much'' and, together with $\LD(\eta)$, cover a large portion of $Q_0$.  This is only a partial decomposition of the subcubes of a specified cube.  See Proposition \ref{p:3.6} for the precise statement and Section B.3 for an overview of the proof.

In Section B.4, we iterate the partial stopping time decomposition of Section B.3 to get a complete stopping time decomposition of the entire set of subcubes of $Q_0$.  As before, $\phi$ has good measure preserving properties on each stopping time region.  It will be shown that the number of stopping time regions will not be too large in the sense that all the top cubes satisfy a $K$-Carleson condition.  There will be a junk set of cubes that will be the cubes of $\SI$ and $\LD$ that satisfy a Carleson condition.  See Proposition \ref{p:4.2} for the precise statement and Section B.4 for an overview of the proof.

Section B.5 refines the stopping time region decomposition of Section B.4 so that each stopping time region becomes good (the definition of a good stopping time region is in the beginning of Section B.5).  This requires a decompositition of each stopping time region from Section B.4 into a collection of good stopping time regions.  This will increase the number of stopping time regions so it will be necessary to show that the top cubes of the collection of good stopping time regions still are $K$-Carleson.  Note that $\phi$ still has good measure preserving properties on the good stopping time regions because they are subsets of the stopping time regions of Section B.4.  The junk set of $\SI$ and $\LD$ cubes remain untouched.  See Proposition \ref{p:5.5} for the precise statement and Section B.5 for an overview of the proof.

In Section B.6, we improve on the measure preserving properties of $\phi$.  See \eqref{e:G-sigma-defn} for the definition of $\calG(\sigma)$, the specific improvement on measure preservation that we use.  We show that most subcubes of $Q_0$ belong to $\calG(\sigma)$.  Specifically, if we consider $G_2$ to be all the cubes in all the stopping time regions of the decomposition of Section B.5, then we show that $G_2 \backslash \calG(\sigma)$ is a $K$-Carleson collection of subcubes.  See Proposition \ref{p:6.13} and Section B.6 for an overview of the proof.

Finally, we give the proof of Proposition \ref{p:M-carleson} in Section B.7 by showing that most cubes of $\calG(\sigma)$ are in $M_A(\zeta)$ in the sense that $\calG(\sigma) \backslash M_A(\zeta)$ is $K$-Carleson.  This together with the result of Section B.6 and the fact that the junk set outside of $G_2$ is in $\SI$ and $\LD$ gives us our needed result.  See Section B.7 for an overview of the proof.

\subsection{Section 2 of \cite{semmes}: Lemmas}
In this subsection, we recall some preliminary terminology and lemmas.  Most of the lemmas of Section 2 of \cite{semmes} establish Carleson bounds.  We will convert them to establishing $K$-Carleson bounds.

Fix a cube $Q_0$ and let $\calE$ a family of cubes.  For $x \in Q_0$, we let $N(x) = N_{Q_0}(x)$ denote the number of cubes in $\calE$ that contain $x$.  Note that all such cubes are then subsets of $Q_0$.

The following lemma will be very important in establishing Carleson bounds.
\begin{customlemma}{B.2.28'} \label{l:2.28}
  Let $\calE$ be a family of cubes in $\Delta$ and suppose that there are positive constants $k,\lambda$ such that
  \begin{align}
    \mu(\{x \in Q \cap K : N_Q(x) > L\}) \leq (1-\lambda) \mu(Q \cap K), \qquad \forall Q \in \Delta. \label{e:N-small}
  \end{align}
  Then $\calE$ is a $K$-Carleson set with constants depending only on $L$ and $\lambda$.
\end{customlemma}
The estimate \eqref{e:N-small} allows one to show that the distribution function
\begin{align*}
  \lambda(t) = \mu(\{x \in Q_0 \cap K : N_{Q_0}(x) > t\})
\end{align*}
decays exponentially, giving us a bound on the Carleson summation.  The full proof of the original Lemma 2.28 in \cite{semmes} requires only superficial modifications to give us Lemma \ref{l:2.28} and so is omitted.  Note that following that proof actually gives the stronger statement that
\begin{align*}
  \sum_{Q' \in \calE, Q' \subseteq Q} \mu(K \cap Q') \leq C \mu(K \cap Q), \qquad \forall Q \in \Delta,
\end{align*}
but we will not need this.

Let $\eta > 0$.  If a family of cubes $\calE$ is a $K$-Carleson set with constant $C$ and does not contain any cubes for which $\mu(Q \cap K) < \eta \mu(Q)$, then it follows easily from the definition of ($K$-)Carleson sets that $\calE$ is actually a Carleson set with constant $C/\eta$.

Given a family of cubes $\calE$, we let $\calE_A$ denote the set of cubes $Q$ such that $Q$ is an $A$-neighbor of some $Q' \in \calE$. 

We will need the original version of Lemma 2.32 of \cite{semmes}.
\begin{customlemma}{B.2.32} \label{l:2.32}
  Let $\calE$ be a Carleson set with constant $C$.  Then $\calE_A$ is also an Carleson set with some constant $C'$ increased by a factor depending on $A$ and $K$.
\end{customlemma}

We will also need the following $K$-Carleson version.
\begin{customlemma}{B.2.32'} \label{l:2.32'}
  Let $\eta > 0$ and $\calE$ be a $K$-Carleson set with constant $C$ so that $\mu(Q \cap K) \geq \eta \mu(Q)$ for all $Q \in \calE$.  Then $\calE_A$ is also an $K$-Carleson set with some constant $C'$ increased by a factor depending on $K$, $A$, and $\eta$.
\end{customlemma}

\begin{proof}
  As $\calE$ does not contain cubes for which $\mu(Q \cap K) < \eta \mu(Q)$, we have that it is a Carleson set of constant depending on $\eta$ and the $K$-Carleson constant of $\calE$.  We then apply Lemma \ref{l:2.32} to get that $\calE_A$ is Carleson with constant increased by a factor depending on $A$, $K$, and $\eta$.  As Carleson sets are $K$-Carleson, we are done.
\end{proof}

Given a $T \in \Delta$, we let
\begin{align*}
  C_A(T) = \{Q \in \Delta : ~&\text{there is a cube} ~Q' \in \Delta ~\text{such that} ~Q ~\text{and} ~Q' ~\text{are neighbors,}\\
  &\text{and either} ~Q \subseteq T, Q' \not\subseteq T, ~\text{or} ~Q' \subseteq T, Q \not\subseteq T \}.
\end{align*}
See (2.33) of \cite{semmes}.

\begin{customlemma}{B.2.34'} \label{l:2.34}
  There is a constant $D$ so that
  \begin{align*}
    \sum_{Q \in C_A(T)} \mu(Q \cap K) \leq D \mu(T),
  \end{align*}
  where $D$ depends only on $A$ and $K$.
\end{customlemma}
The proof requires only superficial modifications of the proof of the original Lemma 2.34 in \cite{semmes} and so is omitted.  The small boundaries property of the cubes (Property 5 of Proposition \ref{p:cubes-1}) is used here.  Note by \eqref{e:small-boundaries} that our cubes have the small boundary property only when restricted to $K$, but this is exactly what is needed for the lemma.

We now move to the Carleson set composition lemmas of \cite{semmes}.  Note that $K$-Carleson conditions do not compose because the $K$-Carleson upper bound does not take into account intersections with $K$.  Thus, we will need that there exists some $\eta > 0$ so that all our cubes in the following lemmas satisfy $\mu(Q \cap K) \geq \eta \mu(Q)$.  We call these cubes high density cubes.  They help us turn $K$-Carleson bounds into Carleson bounds, which will lead to nice composition properties.
\begin{customlemma}{B.2.44'} \label{l:2.44}
  Let $\eta > 0$ and $\calX$ be a collection of cubes that is a $K$-Carleson set and for which each $Q \in \calX$ satisfies $\mu(Q \cap K) \geq \eta \mu(Q)$.  Define $\hat{\calX}_A$ by
  \begin{align*}
    \hat{\calX}_A = \bigcup_{T \in \calX} C_A(T).
  \end{align*}
  Then $\hat{\calX}_A$ is also a $K$-Carleson set, with a constant depending only on $K$, $\eta$, and the $K$-Carleson constant for $\calX$.
\end{customlemma}

\begin{proof}
  Fix some $Q_0 \in \Delta$.  As in the proof of Lemma 2.44 of \cite{semmes}, we get that
  \begin{align*}
    \sum_{R \in \hat{\calX}_A, R \subseteq Q_0} \mu(R \cap K) \leq \sum_{R \in C_A(Q_0)} \mu(R \cap K) + \sum_{T \in \calX(Q_0)} \sum_{R \in C_A(T)} \mu(R \cap K).
  \end{align*}
  See the argument before equation (2.47) of \cite{semmes}.  Using Lemma \ref{l:2.34}, we can convert this to
  \begin{align*}
    \sum_{R \in \hat{\calX}_A, R \subseteq Q_0} \mu(R \cap K) \leq C \mu(Q_0) + C \sum_{T \in \calX(Q_0)} \mu(T).
  \end{align*}
  As $\calX$ is $K$-Carleson and does not contain cubes for which $\mu(Q \cap K) < \eta \mu(Q)$, we get that
  \begin{align*}
    \sum_{R \in \hat{\calX}_A, R \subseteq Q_0} \mu(R \cap K) \leq C \mu(Q_0) + \frac{C}{\eta} \sum_{T \in \calX(Q_0)} \mu(T \cap K) \leq C' \mu(Q_0),
  \end{align*}
  where $C'$ depends on the previous $C$ and $\eta$.
\end{proof}

\begin{customlemma}{B.2.50'} \label{l:2.50}
  Let $\eta > 0$ and $\calF$ be a family of stopping-time regions that are disjoint as subsets of $\Delta$.  Assume that the collection of top cubes $\{Q(S) : S \in \calF\}$ is a $K$-Carleson set with constant $C_1$ and does not contain any cubes for which $\mu(Q \cap K) < \eta \mu(Q)$.  Suppose for each $S \in \calF$ we have a collection of cubes $\calE(S) \subseteq S$ that is a $K$-Carleson set with constant $C_2$.  Then the union
  \begin{align*}
    \calE^* = \bigcup_{S \in \calF} \calE(S)
  \end{align*}
  is a $K$-Carleson set with constant depending only on $C_1$, $C_2$, and $\eta$. 
\end{customlemma}
The proof requires only superficial modifications of the proof of the original Lemma 2.50 in \cite{semmes} once one takes into account that $\{Q(S) : S \in \calF\}$ contains only high density cubes and so is actually also a Carleson set.  Thus, the proof is omitted.

\begin{customlemma}{B.2.58'} \label{l:2.58}
  Let $\eta > 0$ and $\calF$ be a family of stopping time regions that are disjoint as subsets of $\Delta$.  For each $S \in \calF$ set
  \begin{align*}
    S_A = \{ Q \in S : Q' \in S ~\text{whenever} ~Q \text{ and } Q' \text{ are neighbors }\},
  \end{align*}
  and set
  \begin{align*}
    \calB_A = \bigcup_{S \in \calF} (S \backslash S_A).
  \end{align*}
  If the collection of top cubes $\{Q(S) : S \in \calF\}$ is a $K$-Carleson set and satisfies $\mu(Q \cap K) \geq \eta \mu(Q)$, then $\calB_A$ is also a $K$-Carleson set with constant depending only on the $K$-Carleson constant for $\{Q(S)\}_{S \in \calF}$, $\eta$, $A$, and $K$.
\end{customlemma}
The proof requires only superficial modifications of the proof of the original Lemma 2.58 in \cite{semmes} once one takes into account that $\{Q(S) : S \in \calF\}$ contains no cubes of low density and so is actually also a Carleson set.  Note that the bottom cubes of a stopping time region $b(S)$ are still Carleson with constant 1.  Thus, the proof is omitted.

\subsection{Section 3 of \cite{semmes}}
The following gives our initial incomplete stopping time region decomposition of $\Delta(Q_0)$ on which $\phi$ has the good measure preserving condition of (c).  There are not too many stopping time regions in the sense of (e), but the area covered by stopping time regions isn't too small in the sense of (a) and (b).  Crucially, we require that all the stopping time regions consist of cubes of high density.

\begin{customproposition}{B.3.6'} \label{p:3.6}
  Let $\tau,\delta,\eta > 0$ and $Q_0 \in \Delta$ such that $|\phi(Q_0 \cap K)| \geq \delta \mu(Q_0)$ and $\mu(Q_0 \cap K) \geq \eta \mu(Q_0)$.  There exist constants $k,\alpha > 0$ depending only on $\tau$, $\delta$, and $K$ so that the following is true.  There exists a family $\calF$ of pairwise-disjoint stopping time regions of $\Delta$ and a measurable subset $E(Q_0) \subseteq Q_0 \cap K$ with the following properties:
  \begin{enumerate}[(a)]
    \item $\mu(E) \geq \alpha \mu(Q_0 \cap K)$,
    \item if $Q \in \Delta$ satisfies $Q \subseteq Q_0$ and $Q \cap E \neq \emptyset$, then either $Q$ lies in $S$ for some stopping-time region $S \in \calF$ or $Q \in \LD(\eta)$,
    \item if $Q \in S$ and $S \in \calF$, then $Q \subseteq Q_0$, and
    \begin{align*}
      (1 + \tau)^{-1} \frac{|\phi(Q(S) \cap K)|}{\mu(Q(S))} \leq \frac{|\phi(Q \cap K)|}{\mu(Q)} \leq (1 + \tau) \frac{|\phi(Q(S) \cap K)|}{\mu(Q(S))},
    \end{align*}
    \item $|\phi(Q(S) \cap K)| \geq \delta \mu(Q(S))$ for all $S \in \calF$,
    \item for each $x \in K$, there are at most $k$ choices of $S \in \calF$ such that $x \in Q(S)$,
    \item $\mu(Q \cap K) \geq \eta \mu(Q)$ for all $Q \in S$ and $S \in \calF$.
  \end{enumerate}
\end{customproposition}

\begin{proof}
  We run the stopping time process of Section 3 of \cite{semmes} on the subcubes of $Q_0$ but with an extra stopping time condition.  Specifically, we stop at a cube $Q \subseteq Q_0$ if any of the following conditions are satisfied:
  \begin{align}
    \frac{|\phi(Q \cap K)|}{\mu(Q)} &< (1+\tau)^{-1} \frac{|\phi(Q_0 \cap K)|}{\mu(Q_0)}, \label{e:stopping-small} \\
    \frac{|\phi(Q \cap K)|}{\mu(Q)} &> (1+\tau) \frac{|\phi(Q_0 \cap K)|}{\mu(Q_0)}, \label{e:stopping-large} \\
    \mu(K \cap Q) &< \eta \mu(Q). \label{e:stopping-LD}
  \end{align}
  Note that the first two conditions may not necessarily be disjoint from the third.  We start with $Q_0$ and only keep the children of $Q_0$ that do not satisfy \eqref{e:stopping-small}, \eqref{e:stopping-large}, or \eqref{e:stopping-LD}.  We then apply the process to the kept children and iterate the process on all the kept children.  This gives us one stopping time region $S_0$, which we put into the singleton family $\calF_0$.  We then look at the bottom cubes $b(S_0)$.  For each $Q \in b(S_0)$ that satisfies \eqref{e:stopping-large} but not \eqref{e:stopping-LD}, we repeat this stopping time process to get another family of stopping time regions $\calF_1$.  Here, we replace the $Q_0$ in the stopping time conditions with the relevant cube of $b(S_0)$.  We repeat again the process for the bottom cubes of all stopping time regions in $\calF_1$ that satisfy \eqref{e:stopping-large} but not \eqref{e:stopping-LD} to get another family of stopping time regions $\calF_2$.  We keep repeating this process over and over to more families $\calF_3,\calF_4,\calF_5...$  Our final family of stopping time regions will be $\calF = \bigcup_{i=0}^\infty \calF_i$.

  Properties (c) and (d) are immediately verifiable.  Property (e) comes from the fact as $\phi$ is 1-Lipschitz, we have that
  \begin{align}
    |\phi(Q \cap K)| \leq \Hd^n(Q \cap K) \overset{\eqref{e:mu-comparison}}{\leq} C2^n \mu(Q \cap K) \leq C2^n \mu(Q), \qquad \forall Q \in \Delta. \label{e:lipschitz-volume}
  \end{align}
  Thus, if a point is contained in $k$ stopping times regions for $k$ large enough, then \eqref{e:stopping-large} must happen too many times and, and, remembering $|\phi(Q_0 \cap K)| \geq \delta \mu(Q_0)$, we will contradict \eqref{e:lipschitz-volume}.  Property (f) is also immediate from the condition of the stopping time.  For more thorough detail, see the analogous proof in \cite{semmes}.

  To construct $E$, for $S \in \calF$, let $b_1(S)$ denote all the cubes of $b(S)$ that satisfy \eqref{e:stopping-small} but not \eqref{e:stopping-LD}.  If we define
  \begin{align*}
    E = (Q_0 \cap K) \backslash \bigcup_{S \in \calF} \bigcup_{Q \in b_1(S)} Q,
  \end{align*}
  then we see that $E$ satisfies Property b.  Indeed, the stopping time process completely terminates only when either \eqref{e:stopping-small} or \eqref{e:stopping-LD} is satisfied.  Thus, if $Q \cap E \neq \emptyset$, then either $Q \subseteq Q'$ for some maximal $Q'$ satisfying \eqref{e:stopping-LD}, $Q \supsetneq Q'$ for some maximal $Q'$ satisfying \eqref{e:stopping-LD}, or $Q$ is disjoint from all such $Q'$.  The first case, $Q \in \LD(\eta)$.  In the second case, we have that $Q' \subsetneq Q \subseteq Q_0$.  From the construction, one then see that $Q \in S$ for some $S \in \calF$ as the fact that $Q'$ was maximal means that $Q'$ was the when the process terminated.  In the third case, we see that $Q$ then contains some $S \in \calF$ as the process never terminated inside $Q$ and so $Q \in \calF$ by similar reasoning of the second case.  In all three cases, we see that $Q$ satisfies Property (b).

  It remains to lower bound $\mu(E)$ as in Property (a).  We need the following lemma.
  \begin{customlemma}{B.3.16'} \label{l:3.16}
    Let $Q$ be a cube in $\Delta$ and $\{Q_i\}_i$ be a disjoint family of subcubes for which
    \begin{align*}
      \frac{|\phi(Q_i \cap K)|}{\mu(Q_i)} \leq (1 + \tau)^{-1} \frac{|\phi(Q \cap K)|}{\mu(Q)}, \qquad \forall i.
    \end{align*}
    Then
    \begin{align*}
      \mu\left( (Q \cap K) \backslash \bigcup_i Q_i \right) \geq \frac{\tau}{1+\tau} |\phi(Q \cap K)|.
    \end{align*}
  \end{customlemma}
  The proof requires only superficial modifications of the proof of the original Lemma 3.16 in \cite{semmes} and so will be omitted.

  Let $S \in \calF$ and set $E_0(S) = (Q(S) \cap K) \backslash \bigcup_{R \in b_1(S)} R$.  We get from Lemma \ref{l:3.16} that
  \begin{align}
    \mu(E_0(S)) \geq \frac{\tau}{1+\tau} |\phi(Q(S) \cap K)| \geq \frac{\tau}{1+\tau} \delta \mu(Q(S) \cap K), \label{e:E0-decrease}
  \end{align}
  where we used Property (d) in the last inequality.  This is the analogue of equation (3.26) of \cite{semmes}.  The rest of the proof of Property (a) only requires superficial modifications of the proof of the original Property (a).  We define
  \begin{align*}
    E_j = (Q_0 \cap K) \backslash \bigcup_{i=0}^j \bigcup_{S \in \calF_i} \bigcup_{R \in b_1(S)} R.
  \end{align*}
  We get then that
  \begin{align*}
    E_{j+1} = E_j \backslash  \bigcup_{S \in \calF_{j+1}} \bigcup_{R \in b_1(S)} R
  \end{align*}
  and $E = E_k$ by Property (e).  The inequality \eqref{e:E0-decrease} allows us to estimate (with a little work)
  \begin{align*}
    \mu(E_{j+1}) \geq \frac{\tau}{1+\tau} \delta \mu(E_j).
  \end{align*}
  This easily gives us our needed lower bound for $\mu(E) = \mu(E_k)$.  See the proof of Lemma 3.6 in \cite{semmes} for more details.
\end{proof}

As was proven in Remark 3.46 of \cite{semmes}, the set $G = \bigcup_{S \in \calF} S$ is itself a stopping time region.

\subsection{Section 4 of \cite{semmes}}
The following proposition gives our first complete stopping time decomposition of $\Delta(Q_0)$.  The completeness is given in (b) where we are allowed a junk set as defined by (a), (f), and (g).  Crucially, there are not too many stopping time regions as expressed in (e).  As before, the stopping time regions consist only of cubes of high density.  We do this by essentially repeating the stopping time decomposition of Proposition \ref{p:3.6} over and over until we have exhausted all cubes.

\begin{customproposition}{B.4.2'} \label{p:4.2}
  Let $Q_0 \in \Delta$ and fix $\delta,\tau,\eta > 0$.  There exists a constant $k_1$ depending only on $K$, $\delta$, and $\tau$, as well as a family $\calF_1$ of stopping-time regions in $\Delta$ and two collections $\{Q_i\}_{i \in I}$ and $\{P_j\}_{j \in J}$ of cubes in $M$ so that the following are true:
  \begin{enumerate}[(a)]
    \item the $Q_i$'s and $P_j$'s together are pairwise disjoint subcubes of $Q_0$ and the stopping time regions $\calF$ are pairwise disjoint subsets of $\Delta(Q_0)$,
    \item if $R \in \Delta(Q_0)$ then either $R \subseteq Q_i$ for some $i \in I$, $R \subseteq P_j$ for some $j \in J$, or $R \in S$ for some $S \in \calF_1$ (but not more than one),
    \item if $Q \in S$ and $S \in \calF_1$, then
    \begin{align*}
      (1 + \tau)^{-1} \frac{|\phi(Q(S) \cap K)|}{\mu(Q(S))} \leq \frac{|\phi(Q \cap K)|}{\mu(Q)} \leq (1 + \tau) \frac{|\phi(Q(S) \cap K)|}{\mu(Q(S))}, 
    \end{align*}
    \item $|\phi(Q(S) \cap K)| \geq \delta \mu(Q(S))$ for all $S \in \calF_1$,
    \item the family of cubes $\{Q(S) : S \in \calF_1\}$ is a $K$-Carleson set with constant $k_1$,
    \item $|\phi(Q_i \cap K)| < \delta \mu(Q_i), \qquad \forall i \in I$,
    \item $\mu(P_j \cap K) < \eta \mu(P_i), \qquad \forall j \in J$.
    \item $\mu(Q \cap K) \geq \eta \mu(Q)$ for all $Q \in S$ and $S \in \calF_1$.
  \end{enumerate}
\end{customproposition}

\begin{proof}
  We may assume that $|\phi(Q_0 \cap K)| \geq \delta \mu(Q_0)$ and $\mu(Q_0 \cap K) \geq \eta \mu(Q_0)$ as otherwise there is nothing to do.  As in \cite{semmes}, the union of all the stopping time regions $S \in \calF$ where $\calF$ is the family of stopping time regions of Proposition \ref{p:3.6} is a stopping time region itself.  Thus, we apply Proposition \ref{p:3.6} to $Q_0$ to get a family of stopping time regions $\calF(Q_0)$ and let $G$ denote the stopping time region that is the union of all cubes of $\calF$.  Let $b(G)$ denote the bottom cubes of $G$.  By construction, if $Q \in b(G)$, then $Q$ has to satisfy at least one of \eqref{e:stopping-small} or \eqref{e:stopping-LD}.  All the cubes that satisfy \eqref{e:stopping-LD} we put into $\{P_i\}$.  For the other cubes, we check to see if
  \begin{align*}
    |\phi(Q \cap K)| < \delta \mu(Q).
  \end{align*}
  If so, we put it into $\{Q_i\}$.  Any remaining bottom cube $Q'$ satisfy $\phi(Q' \cap K) \geq \delta \mu(Q')$ and $\mu(Q' \cap K) \geq \eta \mu(Q')$ and so we apply the stopping time process of Proposition \ref{p:3.6} on each of these to get more families of stopping time regions $\calF(Q')$.  We continue this way forever or until we run out of cubes.  We let $\calF_1$ denote the union of all these $\calF(Q')$.  Note that $\calF_1$ are composed of stopping time regions of each $\calF$ generated by Proposition \ref{p:3.6}, not the union of these stopping time regions.

  By construction, all the properties besides (e) are satisfied.  See the proof of Proposition 4.2 of \cite{semmes} for more information if needed.  The proof of Property (e) is also similar to the proof of the analogous property in \cite{semmes} with only superficial modifications.  For example, let $\calG$ denote the set of cubes for which Proposition \ref{p:3.6} was applied in the above construction and $Q(G)$ denote the set of cubes that make up the stopping time process starting at $G \in \calG$.  Thus, $\calG$ is a subset of $\{Q(S) : S \in \calF_1\}$ and does not contain cubes for which $\mu(Q \cap K) < \eta \mu(Q)$.  We can get the following claim.

  \begin{customclaim}{B.4.16'}
    For each cube $R \in \Delta$, there is a measurable subset $F(R)$ of $R \cap K$ such that $\mu(F(R)) \geq \alpha \mu(R \cap K)$ and so that for each $y \in F(R)$ there is at most one $Q \in \calG$
  \end{customclaim}
  The proof requires only superficial modifications of the proof of the original Claim 4.16 in \cite{semmes}.  The set $F(R)$ is constructed from the modified $E(T_i)$ sets of Proposition \ref{p:3.6} where $T_i$ are maximal cubes of $\Delta(R)$ and $\calG$ along with additional cubes.  Property (a) of Proposition \ref{p:3.6} gives us our needed lower bound for $\mu(F(R))$.  Note that the modified set $E$ of Proposition \ref{p:3.6} also has the cubes $P_j$, but this is fine as the construction above completely terminates at these cubes.  For more information, see the proof of Claim 4.16 in \cite{semmes}.

  As in \cite{semmes}, Claim B.4.16' and Lemma \ref{l:2.28} show that $\calG$ is $K$-Carleson.  The rest of the proof of Property (e) requires showing that all the cubes of $Q(S)$, not just the ones in $\calG$, are $K$-Carleson.  This follows completely analogously as in \cite{semmes}.  We use that each $\{Q(S) \in \calF_1\} \cap Q(G)$ is uniformly $K$-Carleson by Property (e) of Proposition \ref{p:3.6} and each $G \in \calG$ contains only cubes of high density cubes along with Lemma \ref{l:2.50} to establish Property (e).  The details are left to the reader.
\end{proof}

\subsection{Section 5 of \cite{semmes}}
We recall Definition 5.1 of \cite{semmes}, which says that a stopping time region $S$ is {\it good} if for each $Q \in S$, either all of its children are in $S$ or none of them are.

We will need the following lemma for the next section.
\begin{customlemma}{B.5.2'} \label{l:5.2}
  Suppose $S \subseteq \Delta$ is a good stopping time region.  Let $Q \in S$ and $\{T_i\} \subseteq S$ be a finite family of pairwise-disjoint cubes so that $T_i \subseteq Q$ for all $i$.  Then there exists another finite family of pairwise disjoint cubes $\{W_j\} \subseteq S$ so that $W_j \subseteq Q$ for all $j$, each $W_j$ is disjoint from all the $T_i$, and
  \begin{align*}
    Q \cap K = \left( \bigcup_i (T_i \cap K) \right) \cup \left( \bigcup_j (W_j \cap K) \right).
  \end{align*}
\end{customlemma}
The proof requires only superficial modifications of the proof of the original Lemma 5.2 in \cite{semmes} and so will be omitted.

The following proposition says that we can do the a stopping time region decomposition as in Proposition \ref{p:4.2}, but we can further specify that all the stopping time regions we get are good.  The proof requires decomposing each stopping time region from Proposition \ref{p:4.2} into a collection of good stopping time regions and then verifying that we didn't violate the $K$-Carleson condition of the top cubes.

\begin{customproposition}{B.5.5'} \label{p:5.5}
  Let $Q_0 \in \Delta$ and fix $\delta,\tau,\eta > 0$.  There exists a constant $k_2$ depending on $\delta$ and $\tau$, as well as a family $\calF_2$ of stopping time regions in $\Delta$ and two collections $\{Q_i\}_{i \in I}$ and $\{P_j\}_{j \in J}$ of cubes so that the following are true:
  \begin{enumerate}[(a)]
    \item the $Q_i$'s and $P_i$'s together form a pairwise disjoint collection of subcubes of $Q_0$ and the stopping time regions in $\calF_2$ are pairwise-disjoint as subsets of $\Delta(Q_0)$,
    \item if $R \in \Delta(Q_0)$, then either $R \subseteq Q_i$ for some $i \in I$, $R \subseteq P_j$ for some $j \in J$, or $R \in S$ for some $S \in \calF_2$ (but not more than one),
    \item if $Q, \tilde{Q} \in S$ and $S \in \calF_2$, then
    \begin{align*}
      (1+\tau)^{-2} \frac{|\phi(Q \cap K)|}{\mu(Q)} \leq \frac{|\phi(\tilde{Q} \cap K)|}{\mu(\tilde{Q})} \leq (1+\tau)^2 \frac{|\phi(Q \cap K)|}{\mu(Q)},
    \end{align*}
    \item $|\phi(Q \cap K)| \geq (1+\tau)^{-1} \delta \mu(Q)$ when $Q \in S$, $S \in \calF_2$,
    \item the family of cubes $\{Q(S) : S \in \calF_2\}$ is a $K$-Carleson set with constant $k_2$,
    \item $|\phi(Q_i \cap K)| < \delta \mu(Q_i)$ for all $i \in I$,
    \item each $S \in \calF_2$ is a good stopping time region,
    \item $\mu(P_j \cap K) < \eta \mu(P_j)$ for all $j \in J$.
    \item $\mu(Q \cap K) \geq \eta \mu(Q)$ for all $Q \in S$ and $S \in \calF_2$.
  \end{enumerate}
\end{customproposition}

\begin{proof}
  We run the same exact stopping time region decomposition of the proof of Proposition 5.5 of \cite{semmes} on $\calF_1$ of Proposition \ref{p:4.2} to get a family of good stopping time regions $\calF_2$.  The decomposition is the obvious one where, for some $S \in \calF_1$, we take a maximal good stopping time region $S_0 \subseteq S$ such that $Q(S_0) = Q(S)$.  Then for each $R_i \in b(S_0)$, we take again take a maximal good stopping time region $S_i \subseteq S$ so that $Q(S_i) = R$.  We repeat forever on the bottom cubes that we get or until we run out of bottom cubes.  Thus, Property (g) is satisfied by construction and all other properties besides (e) are satisfied by the properties of $\calF_1$, $Q_i$, and $P_j$ of Proposition \ref{p:4.2}.

  As in \cite{semmes}, we see that a top cube $Q \in \{Q(S) : S \in \calF_2\}$ either belongs to $\{Q(S) : S \in \calF_1\}$ or has a parent that belong to some $S \in \calF_1$, but one of the children of the parent (a sibling of $Q$) does not belong to $S$.  This comes from the good stopping time decomposition of the stopping time regions in $\calF_1$.  From the previous proposition, the cubes that are contained in the former case are $K$-Carleson, so we do not have to worry about them.  For cubes from the latter case, we have that one of the siblings $Q'$ of $Q$ must either be a top cube of some other $\tilde{S} \in \calF_1$ or belong to one of the family $\{Q_i\}_{i \in I}$ and $\{P_j\}_{j \in J}$.

  In the first case, we have from Properties (e) and (h) of Proposition \ref{p:4.2} that $\{Q(S) : S \in \calF_1\}$ are $K$-Carleson and contain only high density cubes.  Thus, by Lemma \ref{l:2.32'}, we get that this group of cubes $Q'$ is also $K$-Carleson.  In the second case, we have that $\{Q_i\}_{i \in I}$ and $\{P_j\}_{j \in J}$ are both Carleson sets because they are composed of disjoint cubes.  Thus, Lemma \ref{l:2.32} shows that this group of cubes $Q'$ are Carleson and so also $K$-Carleson.  This finishes the proof of Property (e), which finishes the proof of the entire proposition.
\end{proof}

\subsection{Section 6 of \cite{semmes}}
We keep the same notation as in the previous sections.  We recall some more notation from \cite{semmes}.  For a cube $Q \in \Delta$, we let
\begin{align*}
  *Q = \bigcup \{T \in \Delta_{j(Q)} : \dist(T,Q) \leq \diam(Q)\}.
\end{align*}
Thus, $\hat{Q} = *Q \cap Q_0$.  Given some $\sigma > 0$, we let
\begin{align}
  \calG(\sigma) = \left\{ Q \in \Delta(Q_0) : (1 + \sigma)^{-1} \frac{|\varphi(Q \cap K)|}{\mu(Q)} \leq \frac{|\varphi(\hat{Q} \cap K)|}{\mu(\hat{Q})} \leq (1 + \sigma) \frac{|\varphi(Q \cap K)|}{\mu(Q)} \right\}. \label{e:G-sigma-defn}
\end{align}
Let $\tau$ be a small number and $\delta,\eta > 0$.  Then we can use Proposition \ref{p:5.5} to get a family $\calF_2$ of stopping time regions in $Q_0$ along with two families of mutually disjoint subcubes $\{Q_i\}_{i \in I}$ and $\{P_j\}_{j \in J}$.  We set
\begin{align*}
  G_2 = \bigcup_{S \in \calF_2} S.
\end{align*}

The following property says that, by taking $\tau$ small enough in Proposition \ref{p:5.5}, we can get that most of the cubes in the good stopping time regions are also in $\calG(\sigma)$.

\begin{customproposition}{B.6.13'} \label{p:6.13}
  Let $\sigma,\delta,\eta > 0$.  If we choose $\tau$ small enough, depending on $\sigma$ and $K$, then $G_2 \backslash \calG(\sigma)$ is a $K$-Carleson set with constant depending only on $\tau,\delta,\eta$, and $K$.
\end{customproposition}

The fact that $G_2$ contains only cubes of high density allows us to transition from $K$-Carleson estimates to Carleson estimates.  Keeping this in mind, most of the proof then requires only superficial modifications of the proof of the original Proposition 6.13 in \cite{semmes}.

We now go quickly over the four reductions of the proof of Proposition \ref{p:6.13}.  For our first reduction, we use Lemma \ref{l:2.50} and Property (e) of Proposition \ref{p:5.5}, to get that it suffices to show that if $\tau$ is sufficiently small, then for any $S \in \calF_2$,
\begin{align*}
  S \backslash \calG(\sigma)
\end{align*}
is $K$-Carleson with a bound depending only on $K$ and $\eta$.

For a $S \in \calF_2$, we define
\begin{align*}
  S' = \{Q \in S : T \in S \text{ whenever } T \in \Delta_{j(Q)} \text{ and } \dist(T,Q) \leq \diam(Q) \}.
\end{align*}
For our second reduction, we use Lemma \ref{l:2.58} and our first reduction to get that it suffices to show that if $\tau$ is sufficiently small, then for every $S \in \calF_2$,
\begin{align*}
  S' \backslash \calG(\sigma)
\end{align*}
is $K$-Carleson with constant depending only on $\tau$, $K$, and $\eta$.

Fix an $S \in \calF_2$.  For our third reduction, we use Lemma \ref{l:2.28} to get that it suffices to show that if $\tau$ is sufficiently small, then for every $Q \in S$ there is a measurable subset $D(Q) \subseteq Q \cap K$ such that
\begin{align*}
  \mu(D(Q)) \geq \gamma \mu(Q \cap K)
\end{align*}
and for each $x \in D(Q)$, there are at most $m$ cubes $R \in S' \backslash \calG(\sigma)$ such that $R \subseteq Q$ and $x \in R$.  Here $m$ and $\gamma$ are positive constants that depend only on $K$.

There is one small point in that in Lemma \ref{l:2.28}, we require the $\mu(D(Q))$ bound for all $Q \in \Delta$ not just $S$, but the ones in $S$ are the only ones we really need.

Fixing some $Q \in S$, we set
\begin{align*}
  \calB_1 = \{ R \in S'\setminus \calG(\sigma) : *R \subseteq Q\},
\end{align*}
and
\begin{align*}
  \calB_2 = \{ R \in S' \setminus \calG(\sigma) : R \subseteq Q \text{ but } *R \not\subseteq Q\}.
\end{align*}
We have the following lemma.
\begin{customlemma}{B.6.27'} \label{l:6.27'}
  There is a constant $C_2$ which depends only on $K$ and $\eta$ so that
  \begin{align*}
    \sum_{R \in \calB_2} \mu(R \cap K) \leq C_2 \mu(Q \cap K).
  \end{align*}
\end{customlemma}
The proof follows easily from the proof of the original Lemma 6.27 and the fact that $Q \in S$ has high density.

Using Lemma \ref{l:6.27'}, we get as in \cite{semmes} that it suffices to prove the following modification of the fourth and final reduction of the proof of Proposition \ref{p:6.13}
\begin{customreduction}{B.6.29'} \label{r:6.29}
  It suffices to show for every $Q \in S$ that if $\tau$ is small enough depending on $\sigma$, $\eta$, and $K$, then there is a measurable subset $E(Q)$ of $Q \cap K$ such that $\mu(E(Q)) \geq \frac{1}{2} \mu(Q \cap K)$ and there are no cubes of $\calB_1$ that intersect $E(Q)$.
\end{customreduction}

To prove the third reduction from Reduction \ref{r:6.29}, one lets $N_2(x)$ for $x \in Q$ denote the number of cubes $R \in \calB_2$ such that $x \in R$.  One then defines
\begin{align*}
  D(Q) = \{x \in E(Q) : N_2(x) < 4C_2 \}
\end{align*}
where $C_2$ is as in Lemma \ref{l:6.27'}.  One then easily gets using Lemma \ref{l:6.27'} that $\mu(D(Q)) \geq \frac{1}{4} \mu(Q \cap K)$ with $m = 4C_2$ for the third reduction.

For more details of these reductions, see the proof of Proposition 6.13 in \cite{semmes}.

The proof of Reduction \ref{r:6.29} itself requires mostly superficial modifications of the proof of the original Reduction 6.29 in \cite{semmes}.  For instance, we start off with the following Vitali type lemma that is the original Lemma 6.39 of \cite{semmes} unmodified:
\begin{customlemma}{B.6.39} \label{l:6.39}
  There is a family $\{R_j\}_{j \in J}$ of elements of $\calB_1$ such that
  \begin{align*}
    *R_i \cap *R_j = \emptyset, \qquad \text{when } i \neq j,
  \end{align*}
  and
  \begin{align*}
    \bigcup_{R \in \calB_1} R \subseteq \bigcup_{j \in J} \lambda R_j,
  \end{align*}
  where $\lambda$ depends only on $K$.
\end{customlemma}
The proof is also unchanged and follows the basic structure of the original Vitali lemma.

We thus get that
\begin{align}
  \bigcup_{R \in \calB_1} (R \cap K) \subseteq \bigcup_{j \in J} (\lambda R_j \cap K). \label{e:vitali}
\end{align}
One easily sees that
\begin{align}
  \mu(\lambda R \cap K) \leq \mu(\lambda R) \overset{\eqref{e:cube-diam} \wedge \eqref{e:cube-growth}}{\leq} C \mu(R), \quad \forall R \in \calB_1, \label{e:high-density}
\end{align}
where $C$ depends on $\lambda > 1$ and $K$.  Thus, we get the following equation that is analogous to (6.47):
\begin{align}
  \mu\left( \bigcup_{R \in \calB_1} (R \cap K) \right) \overset{\eqref{e:vitali}}{\leq} \sum_{j \in J} \mu(\lambda R_j \cap K) \overset{\eqref{e:high-density}}{\leq} C \sum_{j \in J} \mu(R_j). \label{e:R-K-bound}
\end{align}

Note that if $R \in \calB_1$ then one easily gets that $\widehat{R} = *R$ by the definitions.  One can also prove the following lemma.

\begin{customlemma}{B.6.56'} \label{l:6.56}
  If $R \in \calB_1$ and $\tau \leq \min \{1,\sigma/3\}$, then
  \begin{align*}
    \frac{|\phi(*R \cap K)|}{\mu(*R)} &< (1+\sigma)^{-1} (1+\tau)^2 \frac{|\phi(Q \cap K)|}{\mu(Q)}.
  \end{align*}
\end{customlemma}

\begin{proof}
  This follows from Property (c) of Proposition \ref{p:5.5} and the fact that $R$ and $Q$ are both in $S$ once we prove that
  \begin{align}
    \frac{|\phi(*R \cap K)|}{\mu(*R)} < (1+\sigma)^{-1} \frac{|\phi(R \cap K)|}{\mu(R)}. \label{e:6.56-reduce}
  \end{align}
  Assume this were not the case.  Then as $R \notin \calG(\sigma)$, we must have that 
  \begin{align}
    \frac{|\phi(*R \cap K)|}{\mu(*R)} > (1+\sigma) \frac{|\phi(R \cap K)|}{\mu(R)}. \label{e:6.56-hypothesis}
  \end{align}
  Let $N(R)$ be the set of cubes in $\Delta_{j(R)}$ such that $*R = \bigcup_{T \in N(R)} T$.  As $R \in \calB_1$, $R$ is also in $S'$.  Thus, by definition of $S'$, we have that $N(R) \subseteq S$ and so
  \begin{align}
    \frac{|\phi(T \cap K)|}{\mu(T)} \leq (1+\tau)^2 \frac{|\phi(R \cap K)|}{\mu(R)}, \qquad \forall T \in N(R), \label{e:N(R)-compare}
  \end{align}
  by Property (c) of Proposition \ref{p:5.5}.  Then
  \begin{multline}
    |\phi(*R \cap K)| \leq \sum_{T \in N(R)} |\phi(T \cap K)| \\
    \overset{\eqref{e:N(R)-compare}}{\leq} \sum_{T \in N(R)} (1 + \tau)^2 \frac{|\phi(R \cap K)|}{\mu(R)} \mu(T)  = (1+\tau)^2 \frac{|\phi(R \cap K)|}{\mu(R)} \mu(*R). \label{e:density-decomp}
  \end{multline}
  As we have set $\tau$ small enough, we see that $(1+\tau)^2 \leq 1+\sigma$ and so this contradicts \eqref{e:6.56-hypothesis}.  Thus, \eqref{e:6.56-reduce} must be true.
\end{proof}

Define
\begin{align*}
  V = \bigcup_{j \in J} *R_j.
\end{align*}
Using Lemma \ref{l:6.56} and an estimate similar to \eqref{e:density-decomp}, we can easily get that
\begin{align}
  |\phi(V \cap K)| \leq (1+\sigma)^{-1}(1+\tau)^2 \frac{|\phi(Q \cap K)|}{\mu(Q)} \mu(V). \label{e:V-small}
\end{align}

We also have the following lemma.
\begin{customlemma}{B.6.61'} \label{l:6.61}
  \begin{align}
    |\phi((Q \cap K) \backslash V)| \leq (1+ \tau)^2 \frac{|\phi(Q \cap K)|}{\mu(Q)} \mu(Q \backslash V). \label{e:QV-also-small}
  \end{align}
\end{customlemma}

\begin{proof}
  Let $J_0$ be any arbitrary finite subset of $J$ and set $V_0 = \bigcup_{j \in J_0} *R_j$.  Let $N(R)$ be as in the proof of Lemma \ref{l:6.56}.  As before, as each $R_j \in \calB_1$, we have that $N(R_j) \subseteq S$.  Apply Lemma \ref{l:5.2} to the finite set $\{T_i\} = \bigcup_{j \in J_0} N(R_j)$ to get a finite collection $\{W_\ell\}$ of disjoint subcubes of $Q$ such that each $W_\ell$ lies in $S$ and
  \begin{align}
    (Q \cap K) \backslash V_0 = \bigcup_\ell (W_\ell \cap K). \label{e:W-equal}
  \end{align}
  Note also that as the $W_\ell \subseteq Q$ and is disjoint from $\{T_i\}$, we have
  \begin{align}
    \bigcup_\ell W_\ell \subseteq Q \backslash V_0. \label{e:W-subset}
  \end{align}
  Because each $W_\ell$ lies in $S$, we get by Property (c) of Proposition \ref{p:5.5} that
  \begin{align}
    |\phi(W_\ell \cap K)| \leq (1+\tau)^2 \frac{|\phi(Q \cap K)|}{\mu(Q)} \mu(W_\ell). \label{e:W-density}
  \end{align}
  This gives
  \begin{multline*}
    |\phi((Q \cap K) \backslash V_0)| \overset{\eqref{e:W-equal}}{\leq} \sum_\ell |\phi(W_\ell \cap K)| \overset{\eqref{e:W-density}}{\leq} (1+\tau)^2 \frac{|\phi(Q \cap K)|}{\mu(Q)} \sum_\ell \mu(W_\ell) \\
    \overset{\eqref{e:W-subset}}{\leq} (1+\tau)^2 \frac{|\phi(Q \cap K)|}{\mu(Q)} \mu(Q \backslash V_0).
  \end{multline*}
  As $V_0 \subseteq V$, we have shown that
  \begin{align*}
    |\phi((Q \cap K) \backslash V)| \leq (1+\tau)^2 \frac{|\phi(Q \cap K)|}{\mu(Q)} \mu(Q \backslash V_0).
  \end{align*}
  As this holds for any finite subset $J_0$ of $J$, we can then ``pass to the limit'' to prove the lemma.
\end{proof}

The two estimates \eqref{e:V-small} and \eqref{e:QV-also-small} gives
\begin{align*}
  |\phi(Q \cap K)| \leq (1+\sigma)^{-1}(1+\tau)^2 \frac{|\phi(Q \cap K)|}{\mu(Q)} \mu(V) + (1+\tau)^2 \frac{|\phi(Q \cap K)|}{\mu(Q)} \mu(Q \backslash V),
\end{align*}
from which we can get (with a little work)
\begin{align*}
  \mu(V) \leq \frac{\tau}{\sigma} 3(1+\sigma) \mu(Q).
\end{align*}
One then gets
\begin{multline*}
  \mu\left( \bigcup_{R \in \calB_1} (R \cap K) \right) \overset{\eqref{e:R-K-bound}}{\leq} C \sum_{j \in J} \mu(R_j) \leq C \sum_{j \in J} \mu(*R_j) \leq C \mu(V) \\
  \leq \frac{C\tau}{\sigma} 3(1+\sigma) \mu(Q) \leq \frac{C\tau}{\eta \sigma} 3(1+\sigma) \mu(Q \cap K).
\end{multline*}
In the last inequality, we used the fact that $Q \in S \in \calF_2$ has high density.  Taking $\tau$ smaller than $\eta \sigma/6C(1+\sigma)$ finishes the proof.

\subsection{Section 7 of \cite{semmes}: Final proof}
We keep the same notation as the previous sections.  Choose $\sigma$ and $\tau$ small enough so that
\begin{align*}
  (1+\sigma)(1+\tau)^2 \leq 1+\zeta
\end{align*}
and $\tau$ is small enough compared to $\sigma$ for the hypothesis of Proposition \ref{p:6.13}.  We need the following lemma.
\begin{customlemma}{B.7.11'} \label{l:7.11}
  $\Delta(Q_0) \backslash G_2 \subseteq \SI(\delta) \cup \LD(\eta)$.
\end{customlemma}
This follows directly from Properties (b), (f), and (h) of Proposition \ref{p:5.5}.  One then gets from Lemma \ref{l:7.11} that, to prove Proposition \ref{p:M-carleson}, it suffices to show that $G_2 \backslash M_A(\zeta)$ is a $K$-Carleson set.

We let $\calG(\sigma)$ be as before and so we get from Proposition \ref{p:6.13} that $G_2 \backslash \calG(\sigma)$ is $K$-Carleson.  For each $S \in \calF_2$, we let $S_A$ denote the set of cubes $Q \in S$ such that every neighbor of $Q$ also lies in $S$.  Then by Lemma \ref{l:2.58} and Properties (e) and (i) of Proposition \ref{p:5.5}, we have that $\bigcup_{S \in \calF_2} (S \backslash S_A)$ is $K$-Carleson.

Also note that if $Q \in S_A$ but there exists some neighbor of $R \in \Delta(Q_0)$ of $Q$ such that $R \notin \calG(\sigma)$ (but $R \in S$ as $Q \in S_A$), then $Q$ is a neighbor of a cube in $G_2 \backslash \calG(\sigma)$.  Thus, Lemma \ref{l:2.32'} gives that this set of $Q$ is $K$-Carleson.

As $G_2$ is the union of $\calF_2$, we see from the previous two paragraphs that we reduce the proof of Proposition \ref{p:M-carleson} to proving the following:
\begin{customreduction}{B.7.15'} \label{r:7.15}
  Let $Q$ be a cube in $G_2$ such that $Q \in S_A$ for some $S \in \calF_2$ and $R \in \calG(\sigma)$ whenever $R \in \Delta(Q_0)$ is a neighbor of $Q$.  Then $Q \in M_A(\zeta)$.
\end{customreduction}

This follows easily from the hypothesis of the reduction and the properties of Proposition \ref{p:5.5}.  The first property of $M_A(\zeta)$ comes from Property (d) of Proposition \ref{p:5.5}.  The second property comes from the fact that since $Q \in S_A$, then if $R$ is a neighbor of $Q$, $R \in S$ and so we get our needed property from Property (c) of Proposition \ref{p:5.5}.  The final property follows from the second property and the fact that $R \in \calG(\sigma)$.  This finishes the proof Reduction \ref{r:7.15}, which finishes the proof of Proposition \ref{p:M-carleson}.

\begin{bibdiv}
\begin{biblist}

\bib{bate}{article}{
  title = {Structure of measures in Lipschitz differentiability spaces},
  author = {Bate, D.},
  journal = {J. Amer. Math. Soc.},
  note = {To appear},
}

\bib{bourdon-pajot}{article}{
  title = {Poincar\'e inequalities and quasiconformal structure on the boundary of some hyperbolic buildings},
  author = {Bourdon, M.},
  author = {Pajot, H.},
  journal = {Proc. Amer. Math. Soc.},
  volume = {127},
  number = {8},
  pages = {2315-2324},
  year = {1999},
}

\bib{cheeger}{article}{
  title = {Differentiability of Lipschitz functions on metric measure spaces},
  author = {Cheeger, J.},
  journal = {Geom. Funct. Anal.},
  volume = {9},
  number = {3},
  pages = {428-517},
  year = {1999},
}

\bib{cheeger-kleiner-rnp}{article}{
    AUTHOR = {Cheeger, J.},
    AUTHOR = {Kleiner, B.},
     TITLE = {Differentiability of {L}ipschitz maps from metric measure
              spaces to {B}anach spaces with the {R}adon-{N}ikod\'ym
              property},
   JOURNAL = {Geom. Funct. Anal.},
  FJOURNAL = {Geometric and Functional Analysis},
    VOLUME = {19},
      YEAR = {2009},
    NUMBER = {4},
     PAGES = {1017--1028},
      ISSN = {1016-443X},
     CODEN = {GFANFB},
   MRCLASS = {30L05 (46G05 58C20)},
  MRNUMBER = {2570313 (2011c:30138)},
MRREVIEWER = {Jeremy T. Tyson},
       DOI = {10.1007/s00039-009-0030-6},
       URL = {http://dx.doi.org/10.1007/s00039-009-0030-6},
}

\bib{cheeger-kleiner}{article}{
  title = {Inverse limit spaces satisfying a Poincar\'e inequality},
  author = {Cheeger, J.},
  author = {Kleiner, B.},
  note = {Preprint},
  year = {2013},
}

\bib{christ}{article}{
  title = {A $T(b)$ theorem with remarks on analytic capacity and the Cauchy integral},
  author = {Christ, M.},
  journal = {Colloq. Math.},
  volume = {60/61},
  number = {2},
  pages = {601-628},
  year = {1990},
}

\bib{csornyei-jones}{article}{
  author = {Cs\"{o}rnyei, M.},
  author = {Jones, P.},
  title = {Product Formulas for Measures and Applications to Analysis and Geometry},
  note = {www.math.sunysb.edu/Videos/dfest/PDFs/38-Jones.pdf}
}

\bib{david}{article}{
  title = {Morceaux de graphes Lipschitziens et int\'egrales singuli\`eres sur un surface},
  author = {David, G.},
  journal = {Rev. Mat. Iberoam.},
  volume = {4},
  number = {1},
  pages = {73-114},
  year = {1988},
}

\bib{david-gc}{article}{
  title = {Tangents and rectifiability of Ahlfors regular Lipschitz differentiability spaces},
  author = {David, G. C.},
  journal = {Geom. Funct. Anal.}
  note = {To appear},
}

\bib{david-semmes}{article}{
  title = {Quantitative rectifiability and Lipschitz mappings},
  author = {David, G.},
  author = {Semmes, S.},
  journal = {Trans. Amer. Math. Soc.},
  volume = {337},
  number = {2},
  year = {1993},
  pages = {855-889},
}

\bib{jones}{article}{
  title = {Lipschitz and bi-Lipschitz functions},
  author = {Jones, P.},
  journal = {Rev. Mat. Iberoam.},
  volume = {4},
  number = {1},
  year = {1988},
  pages = {115-121},
}

\bib{kirchheim}{article}{
  title = {Rectifiable metric spaces: local structure and regularity of the Hausdorff measure},
  author = {Kirchheim, B.},
  journal = {Proc. Amer. Math. Soc.},
  volume = {121},
  number = {1},
  pages = {113-123},
  year = {1994},
}

\bib{laakso}{article}{
  title = {Ahlfors $Q$-regular spaces with arbitrary $Q > 1$ admitting weak Poincar\'e inequality},
  author = {Laakso, T.},
  journal = {Geom. Funct. Anal.},
  volume = {10},
  number = {1},
  pages = {111-123},
  year = {2000},
}

\bib{mathe}{misc}{
  title = {Disintegrating measures onto Lipschitz curves and surfaces},
  author = {M\'ath\'e, A.},
  note = {ERC Workshop on Geometric Measure Theory, Analysis in Metric Spaces and Real Analysis, October 2013},
}

\bib{mattila}{article}{
        AUTHOR = {Mattila, P.},
        TITLE = {Hausdorff {$m$} regular and rectifiable sets in {$n$}-space},
        JOURNAL = {Trans. Amer. Math. Soc.},
        FJOURNAL = {Transactions of the American Mathematical Society},
        VOLUME = {205},
        YEAR = {1975},
        PAGES = {263--274},
        ISSN = {0002-9947},
        MRCLASS = {28A75},
        MRNUMBER = {0357741 (50 \#10209)},
        MRREVIEWER = {S. J. Taylor},
}

\bib{pansu}{article}{
  title = {M\'etriques de Carnot-Carath\'eodory et quasiisom\'etries des espaces sym\'etriques de rang un},
  author = {Pansu, P.},
  journal = {Ann. of Math. (2)},
  volume = {129},
  number = {1},
  pages = {1-60},
  year = {1989},
}

\bib{preiss}{article}{
    AUTHOR = {Preiss, D.},
     TITLE = {Geometry of measures in {${\bf R}\sp n$}: distribution,
              rectifiability, and densities},
   JOURNAL = {Ann. of Math. (2)},
  FJOURNAL = {Annals of Mathematics. Second Series},
    VOLUME = {125},
      YEAR = {1987},
    NUMBER = {3},
     PAGES = {537--643},
      ISSN = {0003-486X},
     CODEN = {ANMAAH},
   MRCLASS = {28A75},
  MRNUMBER = {890162 (88d:28008)},
MRREVIEWER = {K. J. Falconer},
       DOI = {10.2307/1971410},
       URL = {http://dx.doi.org/10.2307/1971410},
}

\bib{preiss-tiser}{article}{
    AUTHOR = {Preiss, D.},
    AUTHOR = {Ti{\v{s}}er, J.},
     TITLE = {On {B}esicovitch's {$\frac12$}-problem},
   JOURNAL = {J. London Math. Soc. (2)},
  FJOURNAL = {Journal of the London Mathematical Society. Second Series},
    VOLUME = {45},
      YEAR = {1992},
    NUMBER = {2},
     PAGES = {279--287},
      ISSN = {0024-6107},
     CODEN = {JLMSAK},
   MRCLASS = {28A75 (28A78)},
  MRNUMBER = {1171555 (93d:28012)},
MRREVIEWER = {Hermann Haase},
       DOI = {10.1112/jlms/s2-45.2.279},
       URL = {http://dx.doi.org/10.1112/jlms/s2-45.2.279},
}

\bib{semmes}{article}{
  title = {Measure-preserving quality within mappings},
  author = {Semmes, S.},
  journal = {Rev. Mat. Iberoam.},
  volume = {16},
  number = {2},
  year = {2000},
  pages = {363-458},
}

\end{biblist}
\end{bibdiv}

\end{document}